\newlength{\stefan}
\DeclareMathSymbol{\subsetneq}{\mathord}{AMSb}{"26}
\newtheorem{lemma}{Lemma}[section]
\newtheorem{theorem}[lemma]{Theorem}
\newtheorem{proposition}[lemma]{Proposition}
\newtheorem{corollary}[lemma]{Corollary}
\theoremstyle{definition}
\newtheorem{definition}[lemma]{Definition}
\newtheorem{remark}[lemma]{Remark}
\newtheorem{question}[lemma]{Question}
\newcommand{\lp}{\longrightarrow}
\newcommand{\mb}{\mathbb}
\newcommand{\F}{\mb{F}}
\newcommand{\X}{\mathcal{X}}
\newcommand{\XX}{\mathcal{X}}
\newcommand{\C}{\mb{C}}
\newcommand{\Z}{\mb{Z}}
\newcommand{\N}{\mb{N}}
\newcommand{\Aff}{\operatorname{Aff}}
\renewcommand{\ker}{\operatorname{ker}}
\renewcommand{\deg}{\operatorname{deg}}
\newcommand{\Alt}{\operatorname{Alt}}
\newcommand{\Jac}{\operatorname{Jac}}
\newcommand{\GL}{\operatorname{GL}}
\newcommand{\GA}{\operatorname{GA}}
\newcommand{\perm}{\operatorname{Perm}}
\newcommand{\kar}{\operatorname{char}}
\newcommand{\Perm}{\operatorname{Perm}}
\newcommand{\Maps}{\operatorname{Maps}}
\newcommand{\Gal}{\operatorname{Gal}}
\newcommand{\OO}{\mathcal{O}}
\newcommand{\MA}{\operatorname{MA}}
\newcommand{\TA}{\operatorname{TA}}
\newcommand{\Sym}{\operatorname{Sym}}
\newcommand{\BA}{\operatorname{BA}}
\newcommand{\BAs}{\operatorname{BAs}}
\newcommand{\PMA}{\overleftarrow{\MA}}
\newcommand{\PGA}{\overleftarrow{\GA}}
\newcommand{\PTA}{\overleftarrow{\TA}}
\newcommand{\MMA}{\mathcal{M}}
\title{The profinite polynomial automorphism group}
\author{
\begin{tabular}{ll}
Stefan Maubach & Abdul Rauf$\footnote{Supported by DAAD grant ( funding program ID 57076385).}$\\ \small Jacobs University Bremen & \small Jacobs University Bremen\\\small
Bremen, Germany& \small Bremen, Germany \\ \small s.maubach@jacobs-university.de ~~& \small a.rauf@jacobs-university.de
\end{tabular}}
\begin{document}

\maketitle

\begin{abstract}
We introduce an extension of the (tame) polynomial automorphism group over finite fields: the profinite (tame) polynomial automorphism group, which is obtained by putting a natural topology on the automorphism group. We show that most known candidate non-tame automorphisms are inside the profinite tame polynomial automorphism group, giving another result showing that tame maps are potentially ``dense'' inside the set of automorphisms.
We study the profinite tame automorphism group and show that it is not far from the set of bijections obtained by endomorphisms.

\end{abstract}

AMS classification: 14R20, 20B25, 37P05, 11T06, 12E20
\tableofcontents
Keywords: polynomial automorphism, polynomial map, permutation group.

%37P05 iterations of rational or polynomial maps
%11T06 Number theory: polynomials
%12E20 Field theory and polynomials , finite fields
%94A60 Cryptography
%20B20 Permutation groups

\section{Preliminaries}
\label{Section1}

\subsection{Notations and definitions}
The notation $(X_1,\dots,\hat{X}_i,\dots,X_n)$ means that we skip the ith entry.
 $q$ will be a prime power (of $p$, a prime).

\subsection{Introduction}

If $k$ is a field, then in this article, we are considering polynomial maps $F=(F_1,\ldots,F_n)$ where $F_i\in k[X_1,\ldots,X_n]$.
The collection of polynomial maps over $k$ is denoted by $\MA_n(k)$.
They form a monoid under composition $\circ$ (and abelian group under $+$), and each polynomial map indeed induces a map $k^n\lp k^n$.
Thus, in general, we have a map
\[ \MA_n(k)\lp \Maps(k^n,k^n)\]
This map is injective unless $k$ is a finite field - and it's exactly the latter case we'll be discussing in this article.
Thus, we define
\[ \pi_q:\MA_n(\F_q)\lp \Maps(\F_q^n,\F_q^n) \]

The monoid $\MA_n(k)$ has a unit element $(X_1,\ldots,X_n)$. The subset of invertible elements forms a group, and is denoted as
$\GA_n(k)$.  (As a remark, a famous conjecture, the Jacobian Conjecture, states that if $\kar(k)=0$, then $F\in \MA_n(k)$ plus $\det\Jac(F)\in k^*$ implies that $F\in \GA_n(k)$.)
There are a few obviously invertible polynomial maps: \\
(1) Invertible affine maps (i.e $F=TL$ where $T$ is a translation and $L$ is invertible linear). The set of these maps forms a group, denoted by $\Aff_n(k)$.\\
(2) Triangular (or Jonqui\`ere) maps:
$F=(a_1X_1+f_1,\ldots, a_nX_n+f_n)$ where $a_i\in k^*$ and $f_i\in k[X_{i+1},\ldots, X_n]$.
The set of these maps also forms a group, denoted by $\BA_n(k)$. The set of all triangular maps such that $a_i=1$ for $1\leq i\leq n$ is called the set of strictly triangular
polynomial maps, and is a subgroup of $\BA_n(k)$ and is denoted by $\BAs_n(k)$.\\
(See \cite{Essenboek} or any other standard source for proofs on the invertibility of these maps.)\\

It is now natural to define the set of tame automorphisms, $\TA_n(k)=<\BA_n(k), \Aff_n(k)>$.
In dimension 2 it is proven that $\TA_2(k)=\GA_2(k)$ (the Jung-van der Kulk-theorem).
A highly sought-after question (posed by Nagata in 1974) was if $\TA_n(k)=\GA_n(k)$ for some $k$ and $n\geq 3$.
It took about 30 years till Umirbaev and Shestakov proved that if $\kar(k)=0$ then $\TA_3(k)\not = \GA_3(k)$ \cite{Umirbaev-Shestakov,Umirbaev-Shestakov2}.
The problem is still open in higher dimensions and in characteristic $p$ - the latter being the topic of this article.
In fact, one of the motivations for this article is in trying to see if $\pi_{q^m}(\TA_n(\F_q))$ can be different from
$\pi_{q^m}(\GA_n(\F_q))$, which would induce that the groups are not equal and show the existence of wild maps over $\F_q$.

One of the motivating questions for studying this group is the following:
what if $\pi_q(\TA_n(\F_q))$ is unequal to $\pi_q(\GA_n(\F_q))$? Then $\TA_n(\F_q)$ must be unequal to $\GA_n(\F_q)$, and
we have shown that there exist non-tame maps (in a potential trivial way).
In particular, in \cite{Mau03} the following theorem is proven:

\begin{theorem} If $n\geq 2$, then $\pi_q(\TA_n(\F_q))=\Sym(\F_q^n)$ if $q$ is odd or $q=2$.
If $q=2^m$ where $m\geq 2$ then $\pi_q(\TA_n(\F_q))=\Alt(\F_q^n)$.
\end{theorem}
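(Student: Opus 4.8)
The plan is to establish the two inclusions $\pi_q(\TA_n(\F_q))\subseteq\Sym(\F_q^n)$ — refined to $\subseteq\Alt(\F_q^n)$ in the even case — and $\pi_q(\TA_n(\F_q))\supseteq\Alt(\F_q^n)$ separately, after reducing to convenient generators. Since $\TA_n(\F_q)=\langle\Aff_n(\F_q),\BA_n(\F_q)\rangle$, since $\BA_n$ is generated by its diagonal linear part together with the elementary triangular maps, and since $\GL_n$ is generated by diagonal matrices and transvections, conjugating by coordinate permutations shows that $\TA_n(\F_q)$ is generated by the diagonal linear maps together with the elementary maps $E_{i,f}\colon X_i\mapsto X_i+f$, $f\in\F_q[X_1,\dots,\hat X_i,\dots,X_n]$; crucially, over a finite field every function is polynomial, so the values of $f$ on $\F_q^{n-1}$ may be prescribed freely. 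The inclusion into $\Sym(\F_q^n)$ is automatic since tame maps are bijective.

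For the sign bound, view $\F_q^n$ as the disjoint union of the $q^{n-1}$ lines in the $i$-th coordinate direction. Then $E_{i,f}$ acts on each of these lines either trivially or by the translation $t\mapsto t+f(\bar a)$ of $(\F_q,+)$, which is a product of $q/p$ disjoint $p$-cycles; hence the sign of $E_{i,f}$ equals $((-1)^{(p-1)q/p})^{k}$, where $k$ is the number of lines on which $f$ is nonzero. This is $+1$ whenever $p$ is odd, and for $p=2$ it is $+1$ for every $f$ precisely when $q=2^m$ with $m\ge2$. A diagonal matrix acts on all $q^{n-1}$ lines in a given direction by the same permutation $t\mapsto\mu t$ of $\F_q$, a product of $(q-1)/d$ cycles of length $d=\mathrm{ord}(\mu)$, so its sign is $((-1)^{(d-1)(q-1)/d})^{q^{n-1}}$; this is $+1$ whenever $q$ is even (as $n\ge2$ makes $q^{n-1}$ even), while for $q$ odd and $\mu$ a primitive root it is odd. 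Therefore: if $q=2^m$ with $m\ge2$ every generator is even, so $\pi_q(\TA_n(\F_q))\subseteq\Alt(\F_q^n)$; if $q$ is odd then $\mathrm{diag}(\mu,1,\dots,1)$ with $\mu$ primitive is odd, and if $q=2$ then $E_{1,X_2\cdots X_n}$ is a single transposition, so in both of these cases $\pi_q(\TA_n(\F_q))$ is not contained in $\Alt(\F_q^n)$.

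For the largeness part set $G:=\pi_q(\TA_n(\F_q))$. First, $G$ is $2$-transitive: translations act transitively, and $\GL_n(\F_q)\le\mathrm{Stab}_G(0)$ is transitive on $\F_q^n\setminus\{0\}$; hence $G$ is primitive. The aim is then to produce inside $G$ a single cycle of prime length fixing at least three points, so that Jordan's theorem (a primitive group of degree $N$ containing an $\ell$-cycle with $\ell$ prime and $\ell\le N-3$ contains the alternating group) applies with $N=q^n$. If $q=p$ is prime this is immediate: for $v\in\F_p^*$ and $\bar c\in\F_p^{n-1}$ the tame map $E_{n,\,v\cdot\mathbf 1_{\bar c}}$ rotates the single line over $\bar c$ cyclically, so it is a $p$-cycle fixing $p^n-p\ge3$ points, except for $(p,n)=(2,2)$ which is handled by a direct computation inside $S_4$. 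If $q=p^m$ with $m\ge2$ a line has more than $p$ points, so one such rotation is a product of $p^{m-1}$ disjoint $p$-cycles; here one composes two line rotations along two crossing lines — one in the $X_1$- and one in the $X_2$-direction, meeting in a single point — and passes to a suitable power, which annihilates the auxiliary $p$-cycles and leaves one long cycle whose length is determined by the configuration. Varying the configuration (for instance chaining more crossing lines) yields single cycles of many lengths; selecting one of prime length (possible because $q^n$ is large) and invoking Jordan's theorem gives $\Alt(\F_q^n)\le G$. An alternative route, avoiding the length bookkeeping, is to prove directly that $G$ is $4$-transitive by exploiting the freedom to choose the interpolating polynomials in the maps $E_{i,f}$ so as to fix three given points and move a fourth; a $4$-transitive group of degree $q^n$ (which is never one of the exceptional degrees $11,12,23,24$) is $\Alt$ or $\Sym$.

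Combining the two parts, $G$ always lies between $\Alt(\F_q^n)$ and $\Sym(\F_q^n)$, so $G\in\{\Alt(\F_q^n),\Sym(\F_q^n)\}$, and the sign bound then pins it down: $G=\Alt(\F_q^n)$ when $q=2^m$, $m\ge2$, and $G=\Sym(\F_q^n)$ when $q$ is odd or $q=2$. I expect the main obstacle to be the largeness step for $q=p^m$ with $m\ge2$: fabricating, from affine and elementary maps alone, a permutation with exactly one nontrivial cycle of prime length and controlling that length — or, on the alternative route, pushing the transitivity of $G$ up to degree $4$ for configurations of points that need not be affinely independent.
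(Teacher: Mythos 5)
Your parity half is correct and is essentially the standard computation: reducing to diagonal linear maps and elementary maps $E_{i,f}$, counting the sign of a translation $t\mapsto t+f(\bar a)$ on each line (a product of $q/p$ $p$-cycles) and of $t\mapsto \mu t$, and concluding that all generators are even exactly when $q=2^m$, $m\geq 2$, while $\mathrm{diag}(\mu,1,\dots,1)$ (for $q$ odd, $\mu$ primitive) or a single-line flip (for $q=2$) is odd. The $2$-transitivity/primitivity step via the affine group is also fine. The genuine gap is in the largeness step for $q=p^m$, $m\geq 2$, with $p$ odd. Your construction --- compose the translations along two crossing lines and take a suitable power --- does produce a single cycle, but its length is $2p-1$ (the $p$ points of the $\F_p$-segment on the first line plus $p-1$ on the second), and $2p-1$ is prime only for some $p$ (already $p=5$ gives $9$). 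So the classical Jordan theorem (which, as in Theorem \ref{Jordan}, needs a cycle of \emph{prime} length, or a $3$-cycle) does not apply directly. The proposed repairs are not carried out: ``chaining more crossing lines'' would give lengths $k(p-1)+1$, and you would then need both a verification of the resulting cycle structure for a chain and the existence of a prime of that form below $q^n-3$, which is a nontrivial arithmetic input; the alternative route via $4$-transitivity is unproven exactly in the degenerate (collinear/coinciding-coordinate) configurations you flag, and it would moreover invoke the CFSG-based classification of $4$-transitive groups, which is far heavier than the intended argument.

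The missing idea, and the one the paper itself uses in its analogous statement (Lemma \ref{3-cycles}, where the same two indicator shears $s$ and $d$ along crossing lines appear), is to take the \emph{commutator} $w=s^{-1}d^{-1}sd$ rather than the product $sd$: with $s=(X_1+\delta_c(X_2,\dots,X_n),X_2,\dots,X_n)$ and $d=\tau_2 s\tau_2$ supported on two lines meeting in one point, $w$ is exactly a $3$-cycle (on the three points $(-1,0,\dots,0)$, $(0,-1,0,\dots,0)$, $(0,\dots,0)$ in suitable coordinates), for every characteristic $p$ and every $q$. Combined with your primitivity observation and Jordan's theorem in its $3$-cycle form, this yields $\Alt(\F_q^n)\subseteq \pi_q(\TA_n(\F_q))$ uniformly, with no case distinction on $p$, no prime-selection, and no appeal to multiply-transitive classification; your parity computation then pins down $\Sym$ versus $\Alt$ exactly as you state.
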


The following natural conjecture was posed in the same paper:

\begin{question} Is there an automorphism $F\in \GA_n(\F_{2^m})$ where $m\geq 2$ such that
$\pi_{2^m}(F)$ is odd?
\end{question}

Such an example would then automatically have to be a non-tame automorphism.
The above question, even though getting reasonable attention, is unsettled for more than ten years now.
But, then the next step is:

\begin{question} Is there an automorphism $F\in \GA_n(\F_q)$ such that $\pi_{q^m}(F)\not \in \pi_{q^m}(\TA_n(\F_q))$?
\end{question}

We address this question for a large class of candidate wild maps in section \ref{NietWild}.

\subsection{Organisation of this paper}

This paper is organised as follows: \\
In section \ref{Section1} (this section) we give a motivation for and overview of the results in the paper. \\
In section \ref{Section2}  we define the profinite endomorphism monoid $\PMA_n(\F_q)$, and the underlying groups
$\PTA_n(\F_q)$ and $\PGA_n(\F_q)$. \\
In section \ref{Section3} we show that a large class of potentially non-tame maps in $\GA_n(\F_q)$ are inside $\PTA_n(\F_q)$ (i.e. are
``profinitely tame'') \\
In section \ref{Section4} we analyze the group of invertible elements in $\PMA_n(\F_q)$, as this is the ``world'' in which $\PGA_n(\F_q)$ and $\PTA_n(\F_q)$ live in. \\
In sections \ref{Section5},\ref{Section5a},\ref{Section6},   we study the ``distance'' between $\PTA_n(\F_q)$ and $\PGA_n(\F_q)$.  Here, section \ref{Section5} (the main bulk) is an unavoidably technical and tricky proof of a bound between
$\pi_{q^m}(\TA_n(\F_q))$ and $\pi_{q^m}(\MA_n(\F_q))\cap \perm( (F_{q^m})^n)$. This bound is made explicit in section \ref{Section6}. \\
In section \ref{Section5a} we give some examples where we compute the actual size of $\pi_{q^m}(\TA_n(\F_q))$ for some specific $q,m$.

\section{Profinite endomorphisms}
\label{Section2}

It is not that hard to see that $\pi_q(\MA_n(\F_q))=\Maps(\F_q^n,\F_q^n)$, i.e. $\pi_q$ is surjective.
It becomes interesting if one wants to study $\pi_{q^m}(\MA_n(\F_q))$, as this will not be equal to $\Maps(\F_{q^m}^n,\F_{q^m}^n)$.
In order to understand this, let us define the group action
\[\Gal(\F_{q^m}:\F_q) \times (\F_{q^m})^n \longrightarrow (\F_{q^m})^n\]  by
\[\phi\cdot(a_1,\dots,a_n)=(\phi(a_1),\dots,\phi(a_n)).\]
If $\alpha\in (\F_{q^m})^n$, we will denote by $[\alpha]$ the orbit $\Gal(\F_{q^m}:\F_q)\alpha$.
It is clear that if $F\in \MA_n(\F_q)$, then $\phi\pi_{q^m}(F)=\pi_{q^m}\phi(F)$. With a little effort one can show that this is the only constraint:

\begin{proposition} \label{smP1.1}
\[ \pi_{q^m}(\MA_n(\F_q))= \{ \sigma\in \Maps(\F_{q^m}^n,\F_{q^m}^n)~|~ \sigma\phi=\phi\sigma ~\forall~ \phi\in \Gal(\F_{q^m}:\F_q)\}.\]
\end{proposition}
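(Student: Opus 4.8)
The inclusion $\subseteq$ is exactly the observation already noted in the text: if $F \in \MA_n(\F_q)$ then each $F_i$ has coefficients fixed by $\Gal(\F_{q^m}:\F_q)$, so applying a Galois automorphism $\phi$ commutes with substituting a point, giving $\phi \pi_{q^m}(F) = \pi_{q^m}(F)\phi$. Hence every $\sigma$ in the image satisfies the stated commutation relation. The content of the proposition is the reverse inclusion $\supseteq$: given $\sigma \in \Maps(\F_{q^m}^n,\F_{q^m}^n)$ commuting with all $\phi \in \Gal(\F_{q^m}:\F_q)$, we must produce $F \in \MA_n(\F_q)$ with $\pi_{q^m}(F) = \sigma$.

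The plan is a two-step argument. \emph{Step 1: realize $\sigma$ by a polynomial map over $\F_{q^m}$.} Since $\F_{q^m}$ is a finite field, $\pi_{q^m}\colon \MA_n(\F_{q^m}) \to \Maps(\F_{q^m}^n,\F_{q^m}^n)$ is surjective (this is the $\pi_q$-surjectivity statement applied with base field $\F_{q^m}$), so pick any $G = (G_1,\dots,G_n) \in \MA_n(\F_{q^m})$ with $\pi_{q^m}(G) = \sigma$; one may even take each $G_i$ of degree $< q^m$ in each variable by reducing modulo $X_j^{q^m} - X_j$. \emph{Step 2: average $G$ over the Galois group to descend it to $\F_q$.} The group $\Gamma := \Gal(\F_{q^m}:\F_q)$ is cyclic of order $m$; let it act coefficient-wise on $\MA_n(\F_{q^m})$, i.e. $\phi \cdot G := (\phi(G_1^{\phi^{-1}}),\dots)$ where $\phi$ acts on coefficients. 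The fixed subring of $\F_{q^m}[X_1,\dots,X_n]$ under this action is exactly $\F_q[X_1,\dots,X_n]$ (Galois descent for the coefficient field), so it suffices to find a $\Gamma$-fixed $F$ in the $\Gamma$-orbit-span of maps realizing $\sigma$. Here is the key point: for each $\phi \in \Gamma$, the polynomial map $\phi \cdot G$ still realizes $\sigma$, because $\pi_{q^m}(\phi\cdot G) = \phi \circ \pi_{q^m}(G) \circ \phi^{-1} = \phi \sigma \phi^{-1} = \sigma$ by the hypothesis on $\sigma$. Now one can try to average: since two polynomial maps of degree $< q^m$ in each variable that induce the same function on $\F_{q^m}^n$ must be \emph{equal} (the evaluation map on the truncated polynomial ring is injective), in fact $\phi \cdot G = G$ for every $\phi$ already, forcing $G$ to have coefficients in $\F_q$ and finishing the proof with $F := G$.

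The step I expect to be the main (though minor) obstacle is the bookkeeping in Step 2: making precise the coefficient-wise Galois action on $\MA_n$ and verifying the identity $\pi_{q^m}(\phi \cdot G) = \phi\, \pi_{q^m}(G)\, \phi^{-1}$ with the correct placement of $\phi$ versus $\phi^{-1}$, and then invoking injectivity of evaluation on polynomials of bounded degree over a finite field (equivalently, that $X_j^{q^m}-X_j$ generate the kernel of $\F_{q^m}[X]\to \Maps(\F_{q^m}^n,\F_{q^m})$). Once the normalization "each $G_i$ has degree $< q^m$ in each variable" is in place, the uniqueness of such a representative collapses the averaging argument and the conclusion is immediate. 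An alternative to the rigidity argument, should one prefer not to normalize degrees, is to genuinely average: replace $G$ by a trace-like combination built from $\{\phi\cdot G\}_{\phi\in\Gamma}$; this requires a little care since $\MA_n$ is only a monoid under composition, but one can instead average the coefficients of a fixed truncated representative, which is the same computation as above.
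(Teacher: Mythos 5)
Your argument is correct, but it takes a genuinely different route from the paper. The paper proves the inclusion $\supseteq$ constructively: it notes that $\pi_{q^m}$ is $\F_q$-linear, that any $\sigma$ commuting with the Galois action satisfies $\sigma(\alpha)\in\F_q(\alpha)^n$, decomposes $\sigma$ into the ``elementary'' maps $\sigma_{\alpha,\beta}$ supported on a single orbit $[\alpha]$, and realizes each of these by an explicit tuple $(f_{\alpha,\beta_1},\dots,f_{\alpha,\beta_n})$ of $\F_q$-polynomials built in Lemma \ref{sm1.2} as Galois products of interpolation polynomials. You instead interpolate over the big field $\F_{q^m}$ to get a reduced representative $G$ (degree $<q^m$ in each variable), observe that the coefficient-wise Galois twist $G^\phi$ induces $\phi\,\sigma\,\phi^{-1}=\sigma$, and conclude from uniqueness of the reduced representative that $G^\phi=G$, i.e.\ $G$ is already defined over $\F_q$. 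This is a clean Galois-descent/rigidity argument and it is complete once the two standard facts you cite are in place (surjectivity of evaluation over $\F_{q^m}$ and that $X_j^{q^m}-X_j$ generate the kernel of evaluation). What the paper's longer route buys is the explicit orbit-indicator polynomials $f_{\alpha,b}$ of Lemma \ref{sm1.2}, which are not a throwaway: they are reused repeatedly in the later sections (e.g.\ Lemmas \ref{0.6.10}, \ref{1.61}, \ref{1.14}), so the authors need that construction anyway; your route buys brevity and avoids the orbit-by-orbit bookkeeping. One small caution: your sketched ``alternative'' of genuinely averaging a trace-like combination over $\Gamma$ is delicate when $p\mid m$ (one cannot divide by $m$, and the trace can kill coefficients), but this does not affect your main argument, which stands on the rigidity of the reduced representative alone.
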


The above proposition can be easily proved by the following lemma:

\begin{lemma} \label{def1.4} \label{sm1.2}~\\
(1)
For every $\alpha\in (\F_{q^m})^e$ there exists $f_{\alpha,1}\in \F_q[Y_1,\ldots,Y_e]$ such that
$f_{\alpha,1}(\beta)=0 $ if $[\beta]\not =[\alpha]$ and $f_{\alpha,1}(\beta)=1 $ if $[\beta]=[\alpha]$.\\
(2) In case $\F_{q}(\alpha)=\F_{q^m}$, then for every
$b\in \F_{q^m}$ there exists $f_{\alpha,b}\in \F_q[Y_1,\ldots,Y_e]$ such that
$f_{\alpha,b}(\beta)=0 $ if $[\beta]\not =[\alpha]$ and $f_{\alpha,b}(\beta)=b $ if $\beta =\alpha$.\\
\end{lemma}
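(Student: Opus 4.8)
The plan is to build the desired functions explicitly using norms and traces, and interpolation. For part (1), the key observation is that the orbit $[\alpha]$ of $\alpha\in(\F_{q^m})^e$ under $\Gal(\F_{q^m}:\F_q)$ is the set of common zeros (in $(\F_{q^m})^e$) of a finite collection of polynomials over $\F_q$: writing $d$ for the degree $[\F_q(\alpha):\F_q]$, each coordinate $\alpha_i$ satisfies its minimal polynomial $m_i(Y_i)\in\F_q[Y_i]$, and moreover the various coordinates are ``linked'' (a Galois element acts simultaneously). A clean way to encode all of this: for each pair $i,j$ and each Galois element $\phi$, the condition ``$\beta_i=\phi(\alpha_i)$ iff $\beta_j=\phi(\alpha_j)$'' is polynomial. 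Rather than track this combinatorially, I would instead use the standard trick: the function on $\F_{q^m}$ that detects a single element $a\in\F_{q^m}$ is $1-(Y-a)^{q^m-1}$, and then symmetrize over the Galois orbit. Concretely, first produce $g_\alpha\in\F_{q^m}[Y_1,\dots,Y_e]$ with $g_\alpha(\alpha)=1$ and $g_\alpha(\beta)=0$ for all other $\beta\in(\F_{q^m})^e$ (a product of the one-variable detectors $1-(Y_i-\alpha_i)^{q^m-1}$), then set
\[
f_{\alpha,1}=\sum_{\phi\in\Gal(\F_{q^m}:\F_q)}\phi(g_\alpha),
\]
where $\phi$ acts on coefficients. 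This sum is Galois-invariant, hence lies in $\F_q[Y_1,\dots,Y_e]$, and evaluated at $\beta$ it equals $\#\{\phi:\phi(\alpha)=\beta\}$, which is $0$ if $[\beta]\neq[\alpha]$ and a positive integer (the stabilizer size, independent of the point in the orbit) if $[\beta]=[\alpha]$. The one subtlety is that this count is a positive integer that may be divisible by $p$, so I cannot simply normalize by dividing. The fix: the stabilizer of $\alpha$ is $\Gal(\F_{q^m}:\F_q(\alpha))$, of order $m/d$; restrict the sum to a set of coset representatives of this stabilizer, i.e. sum $\phi(g_\alpha)$ over $\phi$ ranging over $\Gal(\F_{q(\alpha)}:\F_q)$ lifted arbitrarily — but this is not Galois-stable over $\F_q$. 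Better: replace $g_\alpha$ by a detector of the \emph{whole} sub-orbit under the stabilizer already, so that the outer sum over $\Gal(\F_{q^m}:\F_q(\alpha))$-cosets has exactly $d$ nonzero terms each equal to $1$; then $f_{\alpha,1}(\beta)\equiv d\pmod p$ when $[\beta]=[\alpha]$... which still fails if $p\mid d$.

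The robust approach, and the one I would actually carry out: forget trying to make the count come out to $1$ by symmetrization alone, and instead use that $\F_q[Y_1,\dots,Y_e]\to\Maps\big((\F_{q^m})^e,\F_{q^m}\big)^{\Gal}$ is surjective is what we are proving, so argue directly. Define $h=\sum_\phi \phi(g_\alpha)\in\F_q[Y_1,\dots,Y_e]$ as above; it is the indicator of the orbit $[\alpha]$ \emph{up to the nonzero scalar} $c:=\#\mathrm{Stab}(\alpha)\bmod p$ — but $c$ could be $0$. To avoid this, choose $g_\alpha$ more cleverly: pick $g_\alpha$ supported on $\alpha$ with $g_\alpha(\alpha)=1$, and note the map $\beta\mapsto\#\{\phi:\phi\alpha=\beta\}$ is constant on the orbit; the issue is solely that constant's residue. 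Here is the clean resolution: work one orbit at a time and use interpolation over the \emph{field} $\F_{q^m}$ to first get any Galois-equivariant $\F_{q^m}$-valued function supported on $[\alpha]$ and taking value $1$ at $\alpha$, call it $u$ (this exists trivially as a set map, then realize it by a polynomial over $\F_{q^m}$ since every function on a finite field is polynomial), and observe $u$ being Galois-equivariant forces its coefficients to satisfy $\phi(u)=u$ as functions, hence — since polynomial representation of functions on $(\F_{q^m})^e$ is unique in degrees $<q^m$ per variable — $\phi$ fixes those reduced coefficients, so $u\in\F_q[Y_1,\dots,Y_e]$. That $u$ is exactly $f_{\alpha,1}$.

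For part (2), assume $\F_q(\alpha)=\F_{q^m}$, so the stabilizer of $\alpha$ is trivial and the orbit $[\alpha]$ has exactly $m$ points, permuted simply transitively by the Galois group. I want $f_{\alpha,b}$ with value $b$ at $\alpha$, value $\phi(b)$... no: the problem only requires $f_{\alpha,b}(\alpha)=b$ and $f_{\alpha,b}(\beta)=0$ off the orbit; values at other orbit points are unconstrained except Galois-equivariance forces $f_{\alpha,b}(\phi\alpha)=\phi(f_{\alpha,b}(\alpha))=\phi(b)$. So the function must be: $\phi\alpha\mapsto\phi(b)$ on the orbit, $0$ elsewhere. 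Define it set-theoretically that way — it is well-defined because the orbit is free — check it is Galois-equivariant by construction, then invoke the same uniqueness-of-polynomial-representation argument to conclude its reduced form has coefficients in $\F_q$, giving $f_{\alpha,b}\in\F_q[Y_1,\dots,Y_e]$. Equivalently and more explicitly, $f_{\alpha,b}=\sum_{\phi}\phi(b)\cdot\phi(g_\alpha)$ where $g_\alpha$ is the $\F_{q^m}$-detector of the single point $\alpha$; Galois-invariance of the whole sum is immediate since applying $\psi$ permutes the summands, and evaluation gives the stated values. The main obstacle, as flagged above, is purely the characteristic-$p$ normalization issue in part (1) — the naive symmetrized sum can vanish — and the way around it is to not normalize a symmetrized polynomial but rather to define the target function set-theoretically and then descend it to $\F_q$ via uniqueness of the reduced polynomial representation of functions on $(\F_{q^m})^e$.
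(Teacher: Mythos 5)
Your proposal is correct, but it takes a genuinely different route from the paper's proof. The paper also starts from the single-point detector $g_{\alpha,1}\in\F_{q^m}[Y_1,\ldots,Y_e]$, but then symmetrizes it \emph{multiplicatively} over $\Gal(\F_{q^m}:\F_q)$, so that the resulting polynomial is literally fixed by the Galois action on coefficients and therefore lies in $\F_q[Y_1,\ldots,Y_e]$ (the displayed product is best read in the complementary form $1-\prod_{\sigma}\bigl(1-\sigma(g_{\alpha,1})\bigr)$, since the raw product of point detectors vanishes on any orbit of size greater than one); and for part (2) it does not symmetrize again but simply multiplies the orbit indicator $f_{\alpha,1}$ by some $h\in\F_q[Y_1,\ldots,Y_e]$ with $h(\alpha)=b$, which exists exactly because $\F_q[\alpha]=\F_q(\alpha)=\F_{q^m}$. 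You instead define the desired Galois-equivariant function set-theoretically, realize it by a polynomial over $\F_{q^m}$, and descend to $\F_q$ via uniqueness of the reduced representation (degree $<q^m$ in each variable), and for (2) you give the twisted sum $\sum_{\phi}\phi(b)\,\phi(g_\alpha)$, whose summands have disjoint supports because the stabilizer of $\alpha$ is trivial. Both arguments are sound. Your descent principle is the more robust and conceptual one: it sidesteps the characteristic-$p$ normalization problem entirely (your observation that additive symmetrization yields the orbit indicator scaled by the stabilizer order modulo $p$ is exactly the right worry), and it in effect proves the general statement behind Proposition \ref{smP1.1}, that equivariance is the only obstruction to being induced by an $\F_q$-polynomial. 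The paper's construction is shorter and more explicit, and its device for (2) --- that when $\alpha$ generates $\F_{q^m}$ every $b\in\F_{q^m}$ is an $\F_q$-polynomial expression in the coordinates of $\alpha$ --- is worth retaining, since this is the form in which the lemma is invoked later (e.g.\ in Lemma \ref{0.6.10}).
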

\begin{proof}
It is trivial that there exists a polynomial $g_{\alpha,1}\in \F_{q^m}[Y_1,\ldots,Y_e]$ such that  $g_{\alpha,1}(\beta)=0$ unless $\beta=\alpha$, when it is 1.
Defining
\[ f_{\alpha,1}=\prod_{\sigma\in \Gal(\F_{q^m}:\F_q)} \sigma(g_{\alpha,1}) \]
we see that $f_{\alpha,1}\in \F_q[Y_1,\ldots,Y_e]$ as $\sigma(f_{\alpha,1}(\beta))=f_{\alpha,1}(\sigma(\beta))$ for every $\sigma\in \Delta$, $\beta\in  (\F_{q^m})^e$. Now $f_{\alpha,1}(\beta)=0$ if $[\beta] \not= [\alpha]$ and $f_{\alpha,1}(\alpha)=1$, solving (1). Now assuming $\F_{q}[\alpha]=\F_q(\alpha)=\F_{q^m}$, there exists some $h\in \F_q[Y_1,\ldots, Y_e]$ such that $h(\alpha)=b$.
Now $f_{\alpha, b}:=hf_{\alpha,1}$ has the desired property yielding (2).
\end{proof}

\begin{proof} (of proposition \ref{smP1.1})
Let us write $A$ for the right hand side of the equality; we only need to show that $A\subseteq \pi_{q^m}(\MA_n(\F_q))$.
Note that $A$ as well as $\MA_n(\F_q))$ are $\F_q$-vector spaces and $\pi_{q^m}$ is $\F_q$-linear.
If $\alpha\in  \F_{q^m}^n$ and $\sigma\in A$,
   and $\phi$ is a generator of the cyclic group $\Gal(\F_{q^m}:\F_q)$,
then the order of $\sigma(\alpha)$ under $\phi$ must divide the order of $\alpha$ under $\phi$ (as $\phi\sigma(\alpha)=\sigma(\phi(\alpha))$).
Thus $[\F_q(\sigma(\alpha)):\F_q]$ divides $[\F_q(\alpha):\F_q]$ meaning that $\sigma(\alpha)\in \F_q(\alpha)^n$.
If
 $\alpha\in \F_{q^m}^n, \beta\in \F_q(\alpha)^n$ then define $\sigma_{\alpha,\beta}\in A$ as the map which is zero outside of $[\alpha]$, and satisfies $\sigma_{\alpha,\beta}(\alpha)=\beta$. (This fixes an element of $A$.) The maps $\sigma_{\alpha,\beta}$ form a generating set of $A$. Now picking $f_{\alpha,\beta_i}$ from lemma \ref{sm1.2} and forming $F=(f_{\alpha,\beta_1},\ldots,
 f_{\alpha,\beta_n})$ we have $\pi_{q^m}(F)=\sigma_{\alpha,\beta}$ and we are done.
\end{proof}

If $d|m$, then there exists a natural restriction map $\pi_{q^m}(\MA_n(\F_q))\lp \pi_{q^d}(\MA_n(\F_q))$. In a diagram, we get

\[
\xymatrix{
& &  \underset{m\in \N}{\varprojlim}~~\pi_{q^m}(\MA_n(\F_q))\ar@{-->>}[dd]\\
&&\\
& &  \pi_{q^{30}}(\MA_n(\F_q))   \ar@{->>}[ld]\ar@{->>}[rd]\ar@{->>}[d]\\
 & \pi_{q^6}(\MA_n(\F_q))  \ar@{->>}[d]\ar@{->>}[rd]& \pi_{q^{10}}(\MA_n(\F_q))  \ar@{->>}[d]\ar@{->>}[rd]\ar@{->>}[ld] &  \pi_{q^{15}}(\MA_n(\F_q))\ar@{->>}[ld]\ar@{->>}[d]&\ldots\ar@{->>}[d]\\
&\pi_{q^2}(\MA_n(\F_q))\ar@{->>}[rd] & \pi_{q^3}(\MA_n(\F_q))\ar@{->>}[d]&  \pi_{q^5}(\MA_n(\F_q))\ar@{->>}[ld]&\pi_{q^7}(\MA_n(\F_q))\ar@{->>}[lld]\\
&& \pi_{q}(\MA_n(\F_q))
} \]

where above we have put the inverse limit $ \underset{m\in \N}{\varprojlim}~~\pi_{q^m}(\MA_n(\F_q))$ of this partially ordered diagram of groups.

\begin{definition} The inverse limit of the above partially ordered diagram of groups is called the profinite polynomial endomorphism monoid over $\F_q$, and denoted as $\PMA_n(\F_q)$. Similarly, we define $\PGA_n(\F_q)$ and
$\PTA_n(\F_q)$ etc.
\end{definition}

Note that $\PMA_n(\F_q)$ can be seen as a subset of $\Maps(\bar{\F}_q^n,\bar{\F}_q^n)$, and in fact proposition
\ref{smP1.1} shows that $\PMA_n(\F_q)=\{ \sigma \in \Maps(\bar{\F}_q^n,\bar{\F}_q^n) ~|~ \sigma\phi=\phi\sigma, \forall \phi\in \Gal(\bar{\F}_q:\F_q)\}$.
We can also embed $\MA_n(\F_q)$ into $\PMA_n(\F_q)$ by the injective map $\bar{\pi}:\MA_n(\F_q)\lp \Maps(\bar{\F}_q^n,\bar{\F}_q^n)$.
Another\footnote{Well, it's actually the definition.} interpretation is that we put a topology on the set $\MA_n(\F_q)$ where a basis of open sets is $\{\ker(\pi_{q^m}); m\in \N\}$, and
$\PMA_n(\F_q)$ is the completion w.r.t. this topology.
(There is similarity with the construction of the $p$-adic integers $\Z_p$ out of the maps $\Z\lp \Z/p^n\Z$, or perhaps better, the construction of the profinite completion of $\Z$, $\hat{\Z}=\prod_p \Z_p$ out of $\Z/n\Z$.)

\section{Automorphisms fixing a variable}
\label{fix}
\label{NietWild}
\label{Section3}

\noindent
{\bf Notations:} If $F\in \GA_n(k[Z])$ and $c\in k$, write $F_c\in \GA_n(k)$ for the restriction of $F$ to $Z=c$.
In case we already have a subscript $F=G_{\sigma}$, then $G_{\sigma,c}=F_c$.
If $F\in \GA_n(k)$, then by $(F,Z)\in \GA_{n+1}(k)$ (or any other appropriate variable in stead of $Z$) we denote the canonical map obtained by
$F$ by adding one dimension. If $F\in \GA_n(k[Z])$ then we identify $F$ on $k[Z]^n$ with $(F,Z)$ on $k^{n+1}$ and denote both by $F$. (In fact, we think of
$\GA_n(k[Z])$ as a subset of $\GA_{n+1}(k)$.)

%One could also state that the lattice $(G_i ~|~ i\in \N^*)$  is a ``$p$-adic approximation of $F$ by elements in $H$'' or something %similar, but the name ``mimickable'' stuck more easily and we prefer it.

The main result of this section is the following proposition:

\begin{theorem} \label{Mimick}
If $F\in \TA_n(\F_q(Z))\cap \GA_n(\F_q[Z])$, and $F_c\in \TA_n(\F_q[c])$ for all $c\in{\F}_{q^m}$,
then $F\in \PTA_n(\F_q[Z])$.
\end{theorem}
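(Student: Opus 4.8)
The plan is to show that for every $m$ there is $G\in\TA_n(\F_q[Z])$ with $\pi_{q^m}(G)=\pi_{q^m}(F)$; then the system $(\pi_{q^m}(F))_m$ lies in $\varprojlim_m\pi_{q^m}(\TA_n(\F_q[Z]))=\PTA_n(\F_q[Z])$. Identifying $\GA_n(\F_q[Z])$ with a subgroup of $\GA_{n+1}(\F_q)$ via $G\mapsto(G,Z)$ (this sends $\TA_n(\F_q[Z])$ into $\TA_{n+1}(\F_q)$, each generator becoming an affine or de Jonqui\`ere map over $\F_q$ after a permutation of variables, using that $\GL_n(\F_q[Z])$ is generated by elementary matrices and constant diagonals because $\F_q[Z]$ is Euclidean), the task becomes: \emph{for each $m$ find $G\in\TA_n(\F_q[Z])$ with $G_c$ and $F_c$ inducing the same self-map of $(\F_{q^m})^n$ for every $c\in\F_{q^m}$}. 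I will use that $\TA_n(K)$, $K$ a field, is generated by the transvections $E_{jk}(\lambda)$ (adding $\lambda X_k$ to $X_j$, $j\neq k$), the elementary de Jonqui\`ere maps $\tau_i(g)$ (adding $g\in K[X_{i+1},\dots,X_n]$ to $X_i$), and the diagonal maps; and that a tame map over $K$ of Jacobian determinant $1$ needs \emph{no} diagonal factor — push all diagonals to one end, the remaining one has determinant $1$, hence (for $n\ge 2$; $n=1$ is trivial below) lies in $\operatorname{SL}_n(K)$, which is generated by transvections.

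First I would write $F$ as a word in these generators over the field $\F_q(Z)$ — possible since $F\in\TA_n(\F_q(Z))$ — and push every diagonal to the right; conjugating a transvection or a $\tau_i$ past a diagonal only multiplies its coefficients by monomials in the diagonal entries, so this yields $F=\tilde Q_1\circ\cdots\circ\tilde Q_N\circ D$ with each $\tilde Q_\nu$ a transvection or elementary de Jonqui\`ere map over $\F_q(Z)$ and $D=\operatorname{diag}(d_1,\dots,d_n)$, $d_i\in\F_q(Z)^*$. Since $F$ is an automorphism over $\F_q[Z]$, $\det\Jac F$ is a unit of $\F_q[Z]$, i.e. a constant $\delta\in\F_q^*$; as transvections and $\tau_i$'s have Jacobian $1$ this forces $\prod_i d_i=\delta$, so $D=\operatorname{diag}(\delta,1,\dots,1)\circ D^{\flat}$ with $D^{\flat}\in\operatorname{SL}_n(\F_q(Z))$ a product of transvections over $\F_q(Z)$ (for $n=1$ this is empty, and indeed $\GA_1(\F_q[Z])=\TA_1(\F_q[Z])$, so the theorem is trivial there). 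This reorganisation is the crux of the whole argument: a tame representation over $\F_q(Z)$ will in general contain scalings $\operatorname{diag}(\lambda(Z),1,\dots,1)$ with $\lambda$ a non-constant rational function, which are not invertible over $\F_q[Z]$; after the normalisation the only genuine diagonal left is the \emph{constant} $\operatorname{diag}(\delta,1,\dots,1)$.

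Now fix $m$, let $B_0\subseteq\bar{\F}_q$ be the finite set of zeros and poles of the finitely many rational functions occurring in this decomposition (including those in $D^{\flat}$), and put $B=B_0\cap\F_{q^m}$. I build $G$ factor by factor: replace each transvection $E_{jk}(r(Z))$ by $E_{jk}\bigl(u(Z)e(Z)r(Z)\bigr)$ and each $\tau_i(g(Z,X))$ by $\tau_i\bigl(u(Z)e(Z)g(Z,X)\bigr)$, where $e\in\F_q[Z]$ clears denominators (so $eg$, resp. $er$, is a polynomial and $e(c)\neq 0$ for $c\notin B_0$) and $u\in\F_q[Z]$ is produced by the Chinese remainder theorem in $\F_q[Z]$ from the data ``$u(c)=e(c)^{-1}$ for $c\in\F_{q^m}\setminus B$, $u(c)=0$ for $c\in B$'' — which is $\Gal(\bar{\F}_q:\F_q)$-equivariant because $B_0$ is, so $u$ exists — and I keep the constant diagonal $\operatorname{diag}(\delta,1,\dots,1)$ untouched. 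Each replacement lies in $\BA_n(\F_q[Z])$ or $\Aff_n(\F_q[Z])$, so $G\in\TA_n(\F_q[Z])$. Since $Z\mapsto c$ is a ring homomorphism commuting with composition whenever $c$ is not a pole, a short check gives $G_c=F_c$ for all $c\in\F_{q^m}\setminus B$ and $G_c=P:=\operatorname{diag}(\delta,1,\dots,1)$ for all $c\in B$.

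It remains to repair the finitely many bad fibres. For $c\in B$ set $H_c:=F_c\circ P^{-1}$; by hypothesis $F_c\in\TA_n(\F_q(c))$, hence $H_c\in\TA_n(\F_q(c))$, and $\det\Jac H_c=\delta\cdot\delta^{-1}=1$, so $H_c$ is a product of transvections and elementary de Jonqui\`ere maps over $\F_q(c)$, with no diagonal. Since $H_{\phi(c)}=\phi H_c\phi^{-1}$ for $\phi\in\Gal(\bar{\F}_q:\F_q)$ (as $F$ and $P$ are defined over $\F_q[Z]$), I fix such decompositions on a set of $\Gal(\bar{\F}_q:\F_q)$-orbit representatives in $B$, transport them by Galois, pad them all with identity factors to one common ``template'' word in the generators $E_{jk}(\cdot)$, $\tau_i(\cdot)$, and interpolate the coefficients over $\F_q[Z]$ exactly as above to obtain $G^{\mathrm{corr}}\in\TA_n(\F_q[Z])$ with $(G^{\mathrm{corr}})_c=H_c$ for $c\in B$ and $(G^{\mathrm{corr}})_c=\mathrm{id}$ for $c\in\F_{q^m}\setminus B$. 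Then $\tilde G:=G^{\mathrm{corr}}\circ G\in\TA_n(\F_q[Z])$ satisfies $\tilde G_c=F_c$ for every $c\in\F_{q^m}$ — namely $\mathrm{id}\circ F_c$ on good fibres and $H_c\circ P=F_c$ on the bad ones — hence $\pi_{q^m}(\tilde G)=\pi_{q^m}(F)$; as $m$ was arbitrary, $F\in\PTA_n(\F_q[Z])$. The step I expect to be the main obstacle is the toral one flagged above: eliminating the non-constant scalings of an $\F_q(Z)$-tame expression for $F$ by collecting all diagonals, pinning the total determinant to $\F_q^*$ through $F\in\GA_n(\F_q[Z])$, and trading the $\operatorname{SL}_n$-part for transvections — after which the hypothesis that every $F_c$ is tame is used only on the finitely many fibres where this normalisation degenerates.
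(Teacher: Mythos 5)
Your proposal is correct and takes essentially the same route as the paper: you first put the $\F_q(Z)$-tame expression of $F$ into a form with no non-constant diagonal (your $\det\Jac F\in\F_q^*$/$\operatorname{SL}_n$-by-transvections bookkeeping playing the role of the paper's Lemma~\ref{remark} and affine normalisation), then, for each $m$, rescale the coefficients of the factors by an element of $\F_q[Z]$ that is $1$ on the good fibres of $\F_{q^m}$ and $0$ on the bad ones, and finally repair the bad fibres by Galois-equivariantly interpolating fibrewise tame decompositions obtained from the hypothesis $F_c\in\TA_n(\F_q[c])$, exactly the mechanism of Lemma~\ref{OpenSetMimick}. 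The only differences are implementational (CRT-style interpolation versus the explicit cutoffs $g^{t(q^m-2)}$ and $1-g^{q^m-1}$, and carrying the constant diagonal $\operatorname{diag}(\delta,1,\dots,1)$ instead of normalising the affine part), so no further comparison is needed.
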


This immediately yields the following important corollary:

\begin{corollary} \label{MimickDim2}
\[  \GA_2(\F_q[Z])\subseteq \PTA_2(\F_q[Z])\subseteq \PTA_3(\F_q).\]
\end{corollary}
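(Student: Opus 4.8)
The plan is to deduce the two inclusions from the results already in place. The second inclusion $\PTA_2(\F_q[Z])\subseteq\PTA_3(\F_q)$ is essentially a matter of unwinding definitions: under the identification of $\GA_2(\F_q[Z])$ with a subgroup of $\GA_3(\F_q)$ explained in the notations of Section \ref{Section3} (an element $F\in\GA_2(\F_q[Z])$ is identified with $(F,Z)\in\GA_3(\F_q)$), the tame subgroup $\TA_2(\F_q[Z])$ lands inside $\TA_3(\F_q)$, since triangular and affine maps in the two variables $X_1,X_2$ over $\F_q[Z]$ become, after adjoining the fixed coordinate $Z$, triangular and affine maps in $X_1,X_2,Z$ over $\F_q$. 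Passing to the profinite completion (the inverse limit over the maps $\pi_{q^m}$) is functorial for this inclusion of groups, so $\PTA_2(\F_q[Z])\hookrightarrow\PTA_3(\F_q)$. I would spell this out by checking that the topology on $\TA_2(\F_q[Z])$ coming from the $\ker(\pi_{q^m})$ is the subspace topology induced from $\TA_3(\F_q)$, which is immediate because the reduction map $\pi_{q^m}$ on $\GA_3(\F_q)$ restricts on the subgroup $\GA_2(\F_q[Z])$ to the corresponding $\pi_{q^m}$ there.

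The first inclusion $\GA_2(\F_q[Z])\subseteq\PTA_2(\F_q[Z])$ is where Theorem \ref{Mimick} does the work. Let $F\in\GA_2(\F_q[Z])$. To apply Theorem \ref{Mimick} (with $n=2$) I must verify its two hypotheses: that $F\in\TA_2(\F_q(Z))\cap\GA_2(\F_q[Z])$, and that $F_c\in\TA_2(\F_q[c])$ for every $c$ in the relevant field $\F_{q^m}$ (and, to get membership in the full inverse limit, for every $c\in\bar{\F}_q$). Membership in $\GA_2(\F_q[Z])$ is given. For the first hypothesis, I would invoke the Jung--van der Kulk theorem over the field $\F_q(Z)$: since $\TA_2=\GA_2$ in dimension two over any field, and $F$ is an automorphism over $\F_q[Z]\subseteq\F_q(Z)$, we get $F\in\GA_2(\F_q(Z))=\TA_2(\F_q(Z))$. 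For the specialization hypothesis, each $F_c$ is an element of $\GA_2$ over the field $\F_q(c)=\F_q[c]$ — it is an automorphism because specialization at $Z=c$ sends the identity $F\circ F^{-1}=\mathrm{id}$ to $F_c\circ (F^{-1})_c=\mathrm{id}$, provided $F^{-1}$ also has coefficients in $\F_q[Z]$, which holds since $F\in\GA_2(\F_q[Z])$ means its inverse lies there too — and again by Jung--van der Kulk over the field $\F_q[c]$ we get $F_c\in\GA_2(\F_q[c])=\TA_2(\F_q[c])$. Thus all hypotheses of Theorem \ref{Mimick} hold, and it yields $F\in\PTA_2(\F_q[Z])$.

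The main obstacle, and the only point requiring genuine care, is making sure the quantifier over $c$ is handled at the right level. Theorem \ref{Mimick} as stated quantifies over $c\in\F_{q^m}$ for a fixed $m$, which gives profinite tameness at level $q^m$; to land in the inverse limit $\PTA_2(\F_q[Z])$ one needs the conclusion simultaneously for all $m$, hence the specialization condition for all $c\in\bar{\F}_q$. As noted above this is automatic from Jung--van der Kulk, so no extra argument is needed, but I would state it explicitly since it is the crux of why dimension two (as opposed to higher dimension, where $\TA_n\neq\GA_n$) is exactly the range where the corollary works. Everything else is bookkeeping: the compatibility of the restriction maps $\pi_{q^m}\to\pi_{q^d}$ for $d\mid m$ with both the inclusion $\GA_2(\F_q[Z])\subseteq\GA_3(\F_q)$ and with specialization, which is routine.
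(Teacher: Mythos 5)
Your proposal is correct and is essentially the paper's intended argument: the paper gives no separate proof, since the corollary follows at once from Theorem \ref{Mimick} by verifying its hypotheses with the Jung--van der Kulk theorem, exactly as you do (over $\F_q(Z)$ for $F$ itself and over each finite field $\F_q[c]$ for the specializations, for all $c\in\bar{\F}_q$), together with the inclusion $\TA_2(\F_q[Z])\subseteq\TA_3(\F_q)$. One small imprecision in your second paragraph of the first inclusion's counterpart: an affine map over $\F_q[Z]$ with nonconstant matrix entries does \emph{not} become an affine map in $X_1,X_2,Z$ over $\F_q$; rather, since $\F_q[Z]$ is Euclidean, $\GL_2(\F_q[Z])$ is generated by elementary matrices (which give triangular maps in three variables) and diagonal matrices with unit, hence constant, entries (which are affine over $\F_q$), so such a map is tame in three variables and the inclusion $\TA_2(\F_q[Z])\subseteq\TA_3(\F_q)$ you need still holds.
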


In particular, it
shows that the famous (notorious?) Nagata automorphism $N=(X-2Y\Delta-Z\Delta^2,Y+Z\Delta,Z)$ where $\Delta=XZ+Y^2$ is
an element of  $\PTA_3(\F_q)$. This shows that Nagata's automorphism is the ``limit'' of tame maps, which calls up resemblance to  \cite{Edo}, where a wild automorphism is shown to be a limit of tame maps in the ``regular'' topology over $\C$.  Slightly more off, the result of Smith (\cite{SMITH}) shows that the Nagata automorphism is stably tame.

Before we state the proof of \ref{Mimick}, we must derive some tools:

\begin{definition}
A map is called {\em strictly Jonqui\`eres} if it is Jonqui\`eres, and has affine part equal to the identity (i.e. linear part identity and zero maps to zero).
\end{definition}

\begin{lemma}\label{remark}
 Let $F\in \TA_n(k(Z))$ be such that the affine part of $F$ is the identity.
Then $F$ can be written as a product of strictly Jonqui\`eres maps and permutations.
\end{lemma}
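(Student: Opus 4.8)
The plan is to strip $F$ into affine and de Jonqui\`eres pieces, extract a strictly Jonqui\`eres map from each de Jonqui\`eres piece, and then use the hypothesis on the affine part to collapse everything genuinely affine (scalings, shears, translations) into permutations.

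First I would use the definition of $\TA_n$ to write $F = g_1 \circ \cdots \circ g_k$ over $K := k(Z)$, with each $g_i$ either in $\Aff_n(K)$ or in $\BA_n(K)$. For a de Jonqui\`eres factor $\tau = (a_1 X_1 + f_1, \dots, a_n X_n + f_n)$ I would split each $f_i$ into its constant, linear and degree-$\ge 2$ parts and solve recursively, from the last variable down to the first, for a factorisation $\tau = \beta \circ s$ with $\beta$ an affine de Jonqui\`eres map and $s$ strictly Jonqui\`eres: once the later coordinates of $s$ are fixed, the $i$-th one is forced and is easily checked to lie in $K[X_{i+1},\dots,X_n]$ with trivial affine part. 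Absorbing the $\beta$'s into the neighbouring affine factors yields $F = \alpha_0 \circ s_1 \circ \alpha_1 \circ \cdots \circ s_r \circ \alpha_r$ with $\alpha_i \in \Aff_n(K)$ and each $s_i$ strictly Jonqui\`eres.

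Next I would decompose the affine factors by the Bruhat decomposition of $\GL_n(K)$ together with the translation subgroup: every $\alpha\in\Aff_n(K)$ is a product of permutation matrices, diagonal maps, unipotent upper triangular linear maps (which are themselves strictly Jonqui\`eres, up to conjugating by permutations) and translations. Then I would push the diagonal maps and translations to one side, using that a diagonal map conjugates a strictly Jonqui\`eres map to a strictly Jonqui\`eres map and a permutation to a permutation times a diagonal, and that a translation moves past a strictly Jonqui\`eres map (producing another one), past a permutation and past a diagonal, at worst acquiring a further translation. This rewrites $F = D \circ T \circ G$ with $D$ a diagonal linear map, $T$ a translation, and $G$ a product of strictly Jonqui\`eres maps and permutations. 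The hypothesis now enters twice. Taking linear parts (a homomorphism $\TA_n(K)\to\GL_n(K)$ under which strictly Jonqui\`eres maps are unipotent and permutations are permutation matrices) forces $D$ to be a diagonal matrix of determinant $\pm1$; over the field $K$ such a matrix is a product of elementary matrices and a permutation, i.e.\ lies in $G$'s subgroup, so $D$ can be absorbed. Since $G$ and $D$ fix the origin, evaluating $F$ at $0$ and using that the affine part of $F$ is the identity forces $T = \mathrm{id}$. Hence $F$ is a product of strictly Jonqui\`eres maps and permutations.

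The step I expect to be the main obstacle is exactly the reduction that kills the diagonal and translation content. An affine shear or scaling is neither a permutation nor strictly Jonqui\`eres, and conjugating strictly Jonqui\`eres maps by such maps does not keep one inside the target subgroup, so the cancellation cannot be done factor by factor; it must be done globally, exploiting that the hypothesis makes the total linear part trivial (this is where $\operatorname{SL}_n(K) = \langle\text{elementary matrices}\rangle$, hence the fact that $K$ is a field, is used to dispose of the diagonal) and makes the image of the origin trivial (to dispose of the translation). The bookkeeping of the translation corrections picked up while commuting factors past each other is the fiddly technical part.
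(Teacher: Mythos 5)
Your proposal is correct and follows essentially the same route as the paper's sketch: decompose $F$ into Jonqui\`eres and affine factors, reduce the linear factors by Gaussian elimination (Bruhat) to permutations, unipotent triangular maps and diagonals, push the diagonal and translation content to one side, and then use the hypothesis twice (origin fixed kills the translation; Jacobian determinant $1$ makes the remaining diagonal of determinant $\pm 1$, which is absorbed as a product of elementary maps and permutations over the field $K$). The only cosmetic difference is that the paper performs this absorption per factor with explicit $2\times 2$ Whitehead-type identities, whereas you collect one global diagonal and translation and invoke $\mathrm{SL}_n(K)=\langle\text{elementary matrices}\rangle$ once at the end.
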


Proofs like the one below use arguments that can be called standard by those familiar with using the Jung-van der Kulk theorem.
Together with the fact that a precise proof is less insightful and involves even more bookkeeping, we decided to sketch the proof:

\begin{proof} (rough sketch.)
The whole proof works since one can ``push'' elements which are both Jonqui\`eres and affine to one side, since if $E$ is Jonqui\`eres (or affine),
and $D$ is both, then there exists an $E'$ which is Jonqui\`eres (or affine), and $ED=DE'$.
This argument is used to standardize many a decomposition. Here, we emphasize that the final decomposition is by no means of minimal length.\\
(1) First, using the definition of tame maps, we decompose\\ $F=E_1A_1E_2A_2\cdots E_sA_s$ where each $E_i$ is Jonqui\`eres and
each $A_i$ is affine. \\
(2) {\em We may assume that $E_i(0)=A_i(0)=0$ for all $1\leq i\leq s$.} For any pure translation part can be pushed to the left, and then we use the fact that $F(0)=0$. I.e.
the $A_i$ are linear.\\
(3) {\em We may assume that $\det(A_i)=1$ and the $E_i$ are strictly Jonqui\`eres.} To realize this, one must notice that there exists a diagonal linear map $D_i$ satisfying $\det(D_i)=\det(A_i)$, and that we can do this by pushing diagonal linear maps to the left. The result follows
since the determinant of the linear part of $F$ is 1, and hence the determinant of the Jacobian of $F$ is 1.\\
(4) {\em We may assume that each $A_i$ is either diagonal of determinant 1 or -1, or a permutation.}
Now we use Gaussian Elimination to write each $A_i=P_{i1}E_{i1}\cdots P_{it}E_{it}D_{i}$ as a composition of permutations $P_{ij}$,  strictly Jonqui\`eres (elementary linear) maps $E_{ij}$, and one  diagonal map $D_{i}$. We may assume that each $E_{ij}$ is in fact upper triangular, by  conjugating with a permutation. Note that the determinant of each $E_{ij}$ is 1, and of each $P_{ij}$ is 1 or -1, so the determinant of $D_i$ is 1 or -1.\\
(5) {\em We may assume that each $A_i$ is a permutation.} We have to replace the diagonal linear maps $D_i\in \GL_n(k(Z))$ which have determinant 1 or -1. First, write the diagonal linear map $D_i=D_{i1}\cdots D_{it}\tilde{D}_i$, where the $D_{ij}$ are diagonal linear of determinant 1  that have 1's on n-2 places, and at most two diagonal elements which are not 1, and $\tilde{D}_i$ has 1 on the diagonal except at one place, where it is 1 or -1.
The following formulas explains how to write a diagonal map $D_{it}$ as product of (linear) strictly Jonqui\`eres maps and permutations, as well as $\tilde{D}_i$:
{
\[
\begin{array}{l}
P:=\left( \begin{array}{cc}
0& 1\\
1 & 0\\
\end{array} \right),~ E[a]:=\left( \begin{array}{cc}
1& a\\
0 & 1\\
\end{array} \right)\\

\left( \begin{array}{cc}
f^{-1}& 0\\
0 & f\\
\end{array} \right)
=
E[f^{-1}]\cdot
P\cdot
E[1-f]\cdot
P\cdot
E[-1]\cdot
P\cdot
E[1-f^{-1}]\cdot P,\\
\left( \begin{array}{cc}
1& 0\\
0 & -1\\
\end{array} \right)=
E[1]\cdot P\cdot E[-1]\cdot P\cdot E[1]\cdot P.
\end{array}\]}
This finishes  (the rough sketch of) the proof.
\end{proof}

\begin{lemma} \label{ClosedSetMimick} \label{OpenSetMimick}
Let $g(Z)\in \F_q[Z]$.\\
Let $F\in \TA_n(\F_q[Z,g(Z)^{-1}])\cap \GA_n(\F_q[Z])$ be such that the affine part is the identity.
Assume that
$F_c\in \TA_n(\F_q[c])$ for all $c\in{\F}_{q^m}$ and all $m$.
Then for any $m\in \N^*$ we find two maps $G,\tilde{G} \in \TA_n(\F_q[Z])$ such that for $c\in \F_{q^m}$:\\
(1) $G_{c}=I_n$ if $g(c)\not =0$,\\
(2) $G_{c}=F_{c}$ if $g(c)=0$,\\
(3) $\tilde{G}_{c}=F_{c}$ if $g(c)\not =0$.\\
\end{lemma}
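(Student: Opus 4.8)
The plan is to interpolate, variable by variable, the local tame decompositions of the restrictions $F_c$ into a single global tame map over $\F_q[Z]$, using the Chinese Remainder Theorem to glue over the finitely many relevant points of $\F_{q^m}$ simultaneously. First I would record the key finiteness input: although $c$ ranges over $\F_{q^m}$ for all $m$, for the fixed $m$ in the statement only the $c\in\F_{q^m}$ matter for conditions (1)--(3), and these fall into finitely many Galois orbits $[c_1],\dots,[c_r]$; moreover, after reducing mod $g$, only the orbits with $g(c)=0$ play a role for $G$, and all orbits for $\tilde G$. For each such orbit $[c_j]$ I would invoke the hypothesis $F_{c_j}\in\TA_n(\F_q[c_j])$ together with Lemma \ref{remark} (the affine part of $F$ is the identity, and this is inherited by each $F_{c_j}$) to write $F_{c_j}$ as a product of strictly Jonqui\`eres maps and permutations over the field $\F_q(c_j)=\F_q[c_j]$. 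Since a permutation can be absorbed into a strictly Jonqui\`eres factor only up to reordering — but the set of strictly Jonqui\`eres maps is not stable under conjugation by arbitrary permutations — I would keep the decomposition in the form $F_{c_j}=E_1^{(j)}P_1^{(j)}\cdots E_s^{(j)}P_s^{(j)}$ with common length $s$ (padding shorter decompositions with identity factors) and a \emph{common} pattern of permutations $P_i^{(j)}=P_i$ independent of $j$; this last normalization is achievable because there are only finitely many orbits and finitely many permutations in $S_n$, so one can refine and pad until the permutation pattern agrees across all $j$.

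Next I would do the actual gluing. Each strictly Jonqui\`eres factor $E_i^{(j)}=(X_1+h_{i,1}^{(j)},\dots,X_n+h_{i,n}^{(j)})$ has entries $h_{i,\ell}^{(j)}\in\F_q[c_j][X_{\ell+1},\dots,X_n]$. I want a single strictly Jonqui\`eres $E_i\in\BAs_n(\F_q[Z])$ whose restriction to $Z=c$ equals $E_i^{(j)}$ whenever $[c]=[c_j]$ (for $G$: only for the orbits with $g=0$; for the orbits with $g\neq 0$ I want $E_i$ to restrict to the identity), and analogously for $\tilde G$ I want $E_i$ to restrict to $E_i^{(j)}$ on the orbits with $g\neq 0$. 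By the lifting from Lemma \ref{sm1.2} / Proposition \ref{smP1.1} (applied coefficient-wise to the finitely many monomials appearing), and by the Chinese Remainder Theorem in $\F_q[Z]$ applied to the pairwise coprime ideals generated by the minimal polynomials of the $c_j$ (and, for $G$, together with a power of $g$ or the factor $g$ itself to force the identity where $g\neq 0$), I can choose each coefficient of each $E_i$ — as a polynomial in $Z$ — to have the prescribed values modulo each relevant ideal. This produces $E_i\in\BAs_n(\F_q[Z])$; composing $G:=E_1P_1\cdots E_sP_s$ (and likewise $\tilde G$, now lifting the data on the $g\neq 0$ orbits and the identity elsewhere if needed) gives an element of $\TA_n(\F_q[Z])$ whose restriction to any $c\in\F_{q^m}$ is, by construction, either $F_c$ or $I_n$ according to whether $g(c)$ vanishes, which is exactly (1)--(3). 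One must check that restriction commutes with composition and with the Jonqui\`eres/permutation structure, which is immediate since specializing $Z=c$ is a ring homomorphism $\F_q[Z]\to\F_q[c]$.

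The main obstacle I expect is the bookkeeping needed to make the permutation patterns and decomposition lengths uniform across all the finitely many orbits simultaneously, while still landing in $\TA_n(\F_q[Z])$: one cannot freely conjugate a $\BAs_n$-factor past a permutation without leaving $\BAs_n$, so the normalization in Lemma \ref{remark} must be applied carefully and in a way that is compatible with the CRT gluing step. A secondary subtlety is that $F\in\TA_n(\F_q[Z,g(Z)^{-1}])$ means the Jonqui\`eres factors of $F$ itself may have denominators that are powers of $g$, so when I restrict $F$ to $Z=c$ with $g(c)\neq 0$ this is fine, but I should make sure the \emph{constructed} $G,\tilde G$ have no denominators — which they won't, because I am not decomposing $F$ globally but rather reassembling from the denominator-free local pieces $F_{c_j}\in\TA_n(\F_q[c_j])$. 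Once the uniformization is in place, the CRT/interpolation step is routine.
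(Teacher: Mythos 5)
Your overall strategy (specialize, decompose each fiber via Lemma \ref{remark}, interpolate the triangular coefficients back into $\F_q[Z]$) is in the paper's spirit, but the actual route is different, and the difference is exactly where your gap sits. The paper never matches decompositions across different orbits: for $G$ it decomposes only the one fiber $F_\alpha$ at a root $\alpha$ of $g$, writes each triangular factor as $I_n+f_i(\alpha,\XX)$ with $f_i\in\F_q[Z,\XX]^n$ (possible since $\F_q[\alpha]$ is generated by $\alpha$ over $\F_q$), multiplies these $f_i$ by the single cutoff $\rho=1-g^{q^m-1}$, and gets the rest of the orbit for free from Galois equivariance, $G_{\Phi(\alpha)}=\Phi(G_\alpha)=F_{\Phi(\alpha)}$; for $\tilde G$ it uses the hypothesis $F\in\TA_n(\F_q[Z,g(Z)^{-1}])$ directly, taking one global decomposition over $\F_q[Z,g^{-1}]$ and replacing every $g^{-t}$ by $g^{t(q^m-2)}$, which has the same value at every $c\in\F_{q^m}$ with $g(c)\neq 0$. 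You instead discard the global decomposition and try to glue the fiberwise decompositions of \emph{all} relevant orbits into a single word by CRT, which is what forces your normalization ``same length and same sequence of permutation letters for every orbit.''

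That normalization is the genuine gap: ``finitely many orbits and finitely many permutations'' is not an argument. The permutation letters cannot be altered freely; the moves available to you (padding with identity triangular letters, inserting cancelling pairs, splitting a permutation letter) never change the product of the permutation letters of a word, so you must first show that each fiber decomposition can be re-chosen with, say, permutation product equal to the identity, and only then can the finitely many patterns be concatenated into a common one (padding the other fibers with identity triangular letters). This can be repaired --- the product of the permutation letters has sign $+1$ because the linear part of $F_c$ is the identity, and an even permutation is a product of permutation-conjugates of elementary unitriangular maps, which can be absorbed as additional triangular letters --- but none of this appears in your proposal; it is precisely the ``main obstacle'' you flag and then wave away. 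A second, related omission: with your choices, on an orbit where all the glued $E_i$ restrict to the identity, $G_c$ restricts to $P_1\cdots P_s$, not to $I_n$, so condition (1) is not actually verified unless you have also arranged $P_1\cdots P_s=\mathrm{id}$ (the same normalization cures this). Once those points are supplied, your coefficientwise interpolation step is sound --- it is Lemma \ref{sm1.2} applied to each coefficient, together with a factor vanishing on the orbits where $g\neq 0$ --- and your explicit orbit-by-orbit bookkeeping would in fact handle a $g$ with several irreducible factors cleanly; but as written the proof is incomplete at its central step, and it is considerably more involved than the paper's argument, which avoids cross-orbit gluing altogether.
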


\begin{proof} We may assume that $m$ is such that $g$ factors completely into linear factors over $\F_{q^m}$ (for the result for divisors of $m$ is implied by the result for $m$).
Let $\alpha$ be a root of $g$.
Consider $F_{\alpha}$, which by assumption and remark \ref{remark}
can be written as a composition of strictly Jonqui\`eres maps $e_i$ and permutations $p_i$ : $F_{\alpha}=e_1p_1e_2\ldots e_sp_s$.
Write the $e_i$ as $I_n+H_i$ where $H_i$ is strictly upper triangular.
We can even write $H_i=f_i(\alpha,\XX)$ where $f_i(Z,\XX) \in \F_q[Z,\XX]^n$ (and $\XX$ stands for $X_1,\ldots,X_n$).

Now we define $\rho:=1-g^{q^m-1}\in \F_q[Z]$ and $E_i:=I_n+ \rho f_i(Z, \XX)$.
Note that all $E_i \in \TA_n(\F_q[Z])$.
We define $G:=E_1p_1E_2p_2\cdots E_sp_s$. Our claim is that this map acts as required.  Since for $c\in \F_{q^m}$ we have
$\rho(c)=0$ if and only if $g(c)\not =0$, it follows that in that case
$G_c=I_n$.
Since $E_{i,\alpha}=e_i$  by construction, we have $G_{\alpha}=F_{\alpha}$.
Now let $\Phi$ be an element of the galois group $Gal(\F_{q^m}:\F_{q})$. The remaining question is if $G_{\Phi(\alpha)}=F_{\Phi(\alpha)}$.
Note that if $P(\XX,Z)\in \F_q[\XX,Z]$ then $\Phi(P(\alpha,\XX))=P(\Phi(\alpha),\XX)$. This implies that if $F\in \GA_n(\F_q[Z])$, then
$F_{\Phi(\alpha)}=\Phi(F_{\alpha})$. Thus $G_{\Phi(\alpha)}=\Phi(G_{\alpha})=\Phi(F_{\alpha})=F_{\Phi(\alpha)}$ and
we are done.

In order to construct $\tilde{G}$, we again consider the decomposition\\ $F=e_1p_1e_2p_2\cdots e_sp_s$ where the  $e_i$.
Now we modify $H_i$ in $e_i:=I_{n+1}+H_i$ in the following way:
replace each fraction $g^{-t}$ by $g^{t(q^m-2)}$, making new elements
$\tilde{H}_i$ which are in $\MA_n(\F_q[Z])$. Write $E_i:=I_{n}+\tilde{H}_i$, and
define $G:=E_1p_1E_2p_2\cdots E_sp_s$.
Note that if $c\in \F_{q^m}$ and $g(c)\not =0$,
then $g^{-t}(c)=g^{t(q^m-2)}(c)$, thus also $E_{i,c}=e_{i,c}$.
In fact, the latter remark implies that $G_c=F_c$ for all $c\in \F_{q^m}$ such that $g(c)\not =0$.
\end{proof}

\begin{proof} (of proposition \ref{Mimick})\\
{\bf (1)} We may assume that the affine part of $F$ is the identity, by, if necessary, composing with a suitable affine map.\\
{\bf (2)} Since $F\in \TA_n(\F_q(Z))$, we can use Lemma \ref{remark} and decompose $F$ into strictly Jonqui\`eres maps over $\F_q(Z)$ and permutations.
Gathering all denominators which appear in this decomposition, we can assume that $F\in \TA_n(\F_q[Z,g(Z)^{-1}])$ for some
$g(Z)$. We can assume $m$ to be such that $g$ factors into linear parts over $\F_{q^m}$.\\
{\bf (3)} We may assume that if $g(c)\not =0$ then $F_c=I$:
Using lemma \ref{OpenSetMimick} we can find $\tilde{G}\in \TA_n(\F_q[Z])$ such that $\tilde{G}_c=F_c$ if $g(c)\not =0$. We can replace $F$ by $\tilde{G}^{-1}F$.\\
{\bf (4)} Using lemma \ref{OpenSetMimick} we find $G\in \TA_n(\F_{q}[Z])$ such that $F_{c}=G_c$ if $g(c)\not =0$, $F_c=G_c=I$ if $g(c)=0$. Thus, $G^{-1}F$ is the identity map on $\F_{q^m}^n$.\\
Thus, for every $m\in \N$ we can find a $G_m\in \TA_n(\F_q[Z])$ such that $\pi_{q^m}(G_m)=\pi_{q^m}(F)$ (as permutation on
$\F_{q^m}^{n+1}$), meaning that $F\in \PTA_n(\F_q[Z])\subset \PTA_{n+1}(\F_q)$.
\end{proof}

To finish this section, the below lemma clarifies exactly when a map in $\PTA_n(\F_q)\cap \GA_n(\F_q)$ is in $\TA_n(\F_q)$.

\begin{lemma} Let $F\in \PTA_n(\F_q)$, and suppose that there exists a bound $d\in \N$ such that for all $m\in \N$ we have
 $T_m\in \TA_n(\F_{q^m})$ such that $\pi_{q^m}(T_m)=\pi_{q^m}(F)$ and $\deg(T_m)\leq d$. Then $F\in \TA_n(\F_q)$.
\end{lemma}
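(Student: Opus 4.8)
The plan is to first upgrade $F$, which a priori is only a compatible system of permutations, to a single polynomial map of degree at most $d$; then to descend its field of definition to $\F_q$; and finally to descend its tameness. I expect the last step to be the main obstacle.

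\emph{Step 1 (one bounded-degree representative).} I would begin from the standard fact that a nonzero polynomial in $K[X_1,\dots,X_n]$ of total degree $<q^m$, with $\F_{q^m}\subseteq K$, has degree $<q^m$ in each variable and hence does not vanish identically on $\F_{q^m}^n$ (easy induction on $n$); thus, for $q^m>d$, a polynomial map of degree $\le d$ with coefficients in any extension of $\F_{q^m}$ is determined by the function it induces on $\F_{q^m}^n$. Let $m_0$ be least with $q^{m_0}>d$. For any multiple $m$ of $m_0$, the maps $T_{m_0}$ and $T_m$ have degree $\le d<q^{m_0}$, and, $F$ being a coherent profinite element, they induce the same function $\pi_{q^{m_0}}(F)$ on $\F_{q^{m_0}}^n$ (for $T_m$, restrict $\pi_{q^m}(T_m)=\pi_{q^m}(F)$ to $\F_{q^{m_0}}^n$); hence $T_m=T_{m_0}$ as polynomial maps. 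Put $T:=T_{m_0}$, so $\deg T\le d$, $T\in\TA_n(\F_{q^{m_0}})$, and $T$ agrees with $F$ on $\F_{q^m}^n$ for every multiple $m$ of $m_0$. Since $\bigcup_{m_0\mid m}\F_{q^m}^n=\bar{\F}_q^n$, this gives $\bar{\pi}(T)=F$ in $\Maps(\bar{\F}_q^n,\bar{\F}_q^n)$.

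\emph{Step 2 (field of definition).} By the description of $\PMA_n(\F_q)$, $F$ commutes with every $\phi\in\Gal(\bar{\F}_q:\F_q)$. For a polynomial map $S$ one has $\bar{\pi}(\phi(S))=\phi\circ\bar{\pi}(S)\circ\phi^{-1}$, where $\phi(S)$ is obtained by applying $\phi$ to the coefficients of $S$; taking $S=T$ gives $\bar{\pi}(\phi(T))=\phi F\phi^{-1}=F=\bar{\pi}(T)$. Now $T$ and $\phi(T)$ both have coefficients in $\F_{q^{m_0}}$ and degree $\le d<q^{m_0}$, so the uniqueness of Step 1 forces $\phi(T)=T$; hence every coefficient of $T$ is Frobenius-fixed, i.e. $T\in\MA_n(\F_q)$. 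As $T\in\GA_n(\F_{q^{m_0}})$ and its polynomial inverse is likewise Frobenius-fixed, $T\in\GA_n(\F_q)$, with $\deg T\le d$ and $\bar{\pi}(T)=F$ (so in particular $\pi_{q^m}(T)=\pi_{q^m}(F)$ for all $m$). If one only wishes to record that $F$ is induced by an honest automorphism over $\F_q$, Steps 1--2 already suffice.

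\emph{Step 3 (tameness over $\F_q$: the crux).} It remains to pass from ``$T\in\GA_n(\F_q)$ is tame over $\F_{q^{m_0}}$'' to ``$T\in\TA_n(\F_q)$'', which identifies $F=\bar{\pi}(T)$ with an element of $\TA_n(\F_q)\subseteq\PTA_n(\F_q)$. This is where the hypothesis $F\in\PTA_n(\F_q)$ must be used: for each $m$ it provides $G_m\in\TA_n(\F_q)$ with $\pi_{q^m}(G_m)=\pi_{q^m}(F)=\pi_{q^m}(T)$, so $G_m$ agrees with the degree-$\le d$ map $T$ on all of $\F_{q^m}^n$; the goal is to convert such approximations into an honest decomposition of $T$ over $\F_q$ into affine and Jonqui\`eres maps. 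I expect this to be the hard part. In dimension $n=2$ it is immediate because $\TA_2=\GA_2$. In general one is really asking for descent of tameness along the finite Galois extension $\F_{q^{m_0}}/\F_q$ for the bounded-degree map $T$; the natural attempt is to fix a decomposition $T=\tau_1\cdots\tau_r$ over $\F_{q^{m_0}}$, observe that each $\phi\in\Gal(\F_{q^{m_0}}:\F_q)$ yields a further decomposition $T=\phi(\tau_1)\cdots\phi(\tau_r)$, and combine these with the $\F_q$-rational approximants $G_m$ and a bounded ``tame correction'' inside $\ker\pi_{q^m}$ (in the spirit of Lemma~\ref{ClosedSetMimick}) into a single $\F_q$-rational decomposition. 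Making this rigorous in all dimensions is the heart of the matter; everything before it is bookkeeping with the uniqueness of bounded-degree representatives.
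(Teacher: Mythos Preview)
Your Steps~1--2 are correct and carefully done, but you have made the problem much harder than intended. The paper's own proof is two lines: by the same uniqueness-of-low-degree-representatives observation you use in Step~1, the sequence $T_1,T_2,\ldots$ stabilizes once $q^m>d$, and the common value $T$ equals $F$ and lies in $\TA_n(\F_q)$. That last clause only follows if the $T_m$ themselves lie in $\TA_n(\F_q)$, not merely in $\TA_n(\F_{q^m})$; so the $\F_{q^m}$ in the hypothesis is almost certainly a typo for $\F_q$. With that reading, your Step~1 already finishes the argument (the stabilized $T=T_{m_0}$ is by hypothesis in $\TA_n(\F_q)$), and Steps~2--3 are unnecessary.

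Under the literal reading you adopted, your Step~3 is a genuine gap. You are asking for descent of tameness along a finite extension for a fixed automorphism $T\in\GA_n(\F_q)$, and the extra hypothesis $F\in\PTA_n(\F_q)$ does not help in the way you suggest: the approximants $G_m\in\TA_n(\F_q)$ it supplies have no degree bound (that is precisely what distinguishes $\PTA_n$ from $\TA_n$), so none of the uniqueness arguments of Step~1 apply to them, and there is no evident mechanism for converting the Galois-conjugate decompositions of $T$ over $\F_{q^{m_0}}$ into a single $\F_q$-rational one. In short, the statement with $\F_{q^m}$ is not what the paper proves and is not something your outline establishes either.
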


\begin{proof}
There exists at most one $T\in \TA_n(\F_q)$ of degree $<q^m$ such that $\pi_{q^m}(T)=\pi_{q^m}(F)$. This means that the sequence  $T_1,T_2,T_3,\ldots$ must stabilize. Thus $F=T_m$ if $q^m>d$.
\end{proof}

Note that the converse of the above lemma is trivially true.

\section{The profinite permutations induced by endomorphisms}
\label{Section4}

\begin{definition} Define $\MMA_n(\F_q):=\PMA_n(\F_q)\cap \perm(\bar{\F}_q^n)$.
\end{definition}

$\MMA_n(\F_q)$ is the set of permutations induced by {\em en}domorphisms. It is equal to the set of permutation of $\bar{\F}_q^n$ which commute with $\Gal(\bar{\F}_q:\F_q)$. We do have the following inclusions:

\[ \PTA_n(\F_q)\subseteq \PGA_n(\F_q)\subseteq\MMA_n(\F_q).\]

This means that we have to briefly analyze $\MMA_n(\F_q)$ as this apparently is the ``world'' in which our more complicated objects $\PTA_n(\F_q)$ and $\PGA_n(\F_q)$ live.

\begin{definition} Define $\X_d$ as the union of all orbits of size $d$ of the action of $\Gal(\bar{\F}_q:\F_q)$ on  $\bar{\F}_q^n$.
\end{definition}

If we have $\sigma, \rho\in \Gal(\bar{\F}_q:\F_q)$, then in this section we denote the action of $\sigma$ on $x\in X$ by $x^{\sigma}$, and thus  $(x)^{\sigma\rho} = ((x)^\sigma)^\rho$. ) This is for now more convenient than the notation $\sigma(x)$, as in the below corollary we do not need to talk about the opposite group action etc.

\begin{corollary} (of \ref{smP1.1})\\\label{cor3.1}
(1)
\[ \MMA_n(\F_q)=\bigoplus_{d\in \N}  G_d\]
where $G_d$ is the set of permutations on $\X_d$ which commute with $\Gal(\F_{q^d}:\F_q)$. \\
(2) This means that
\[ G_d\cong \Z/d\Z \wr \perm(r_d)\]
where $r_d$ is the amount of orbits of size $d$ (i.e. $r_d=d^{-1}\#\X_d$).\\
(3)

This is equivalent to the following statement:  $$G_d\cong ((\Z/d\Z)^{r_d}\rtimes_h \Perm(r_d)$$ where $h:\Perm(r_d)\lp Aut(({\Z/d\Z})^{r_d})$ such that $h(\sigma)(a_1,a_2,\dots,a_{r_d})=(a_{(1)^\sigma},a_{(2)^\sigma},\dots,a_{{(r_d)}^\sigma}).$
\end{corollary}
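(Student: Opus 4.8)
The plan is to prove the three parts in order, each building on the previous. For part (1), I would start from the description of $\MMA_n(\F_q)$ as the set of permutations $\sigma$ of $\bar{\F}_q^n$ commuting with the Galois action, which follows from Proposition \ref{smP1.1} intersected with $\perm(\bar{\F}_q^n)$. The key observation is that the sets $\X_d$ partition $\bar{\F}_q^n$ (every point lies in a unique finite orbit, whose size $d$ is the degree $[\F_q(\alpha):\F_q]$ of the point), and that this partition is Galois-stable. A permutation commuting with $\Gal(\bar{\F}_q:\F_q)$ must send any orbit of size $d$ to another orbit of size $d$, hence preserves each $\X_d$ setwise; conversely a choice of a Galois-commuting permutation on each $\X_d$ glues to one on all of $\bar{\F}_q^n$. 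This gives the direct sum decomposition, with $G_d$ the Galois-commuting permutations of $\X_d$; note the action on $\X_d$ factors through $\Gal(\F_{q^d}:\F_q)$ since that is the pointwise stabilizer modulo which the action on $\X_d$ is faithful.

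For part (2), I would fix a set of orbit representatives $\alpha_1,\dots,\alpha_{r_d}$ for the $r_d$ orbits of size $d$ in $\X_d$, and let $\phi$ be the Frobenius generator of $\Gal(\F_{q^d}:\F_q) \cong \Z/d\Z$, so that each orbit is $\{\alpha_j, \alpha_j^\phi,\dots,\alpha_j^{\phi^{d-1}}\}$ and we may label the points of $\X_d$ by pairs $(j,i) \in \{1,\dots,r_d\}\times \Z/d\Z$ via $\alpha_j^{\phi^i}$. A Galois-commuting permutation $\sigma$ permutes the orbits among themselves, inducing $\bar\sigma \in \Perm(r_d)$, and within the action it is determined by, for each $j$, the "shift" $c_j \in \Z/d\Z$ with $\sigma(\alpha_j) = \alpha_{\bar\sigma(j)}^{\phi^{c_j}}$; commutation with $\phi$ forces $\sigma(\alpha_j^{\phi^i}) = \alpha_{\bar\sigma(j)}^{\phi^{i+c_j}}$, so $\sigma \leftrightarrow ((c_1,\dots,c_{r_d}),\bar\sigma)$. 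Checking that composition of two such permutations corresponds to the wreath product multiplication rule is then a direct computation, giving the isomorphism $G_d \cong \Z/d\Z \wr \Perm(r_d)$.

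Part (3) is then just unwinding the standard definition of the (regular, or permutation) wreath product as a semidirect product: $\Z/d\Z \wr \Perm(r_d) = (\Z/d\Z)^{r_d} \rtimes_h \Perm(r_d)$ with $h$ the coordinate-permutation action, which I would state and verify matches the multiplication obtained in part (2) — being careful about whether $h(\sigma)$ permutes coordinates by $\sigma$ or $\sigma^{-1}$, which is exactly what the notation $h(\sigma)(a_1,\dots,a_{r_d}) = (a_{(1)^\sigma},\dots,a_{(r_d)^\sigma})$ pins down. The main obstacle, such as it is, will be bookkeeping the conventions consistently: the left-versus-right Galois action (the paper's switch to exponential notation $x^\sigma$ is precisely to make $h$ a genuine left action), the direction of the coordinate shuffle in $h$, and the order in which shifts compose under permutation relabelling. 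Once a single convention is fixed and the labelling $\X_d \cong \{1,\dots,r_d\} \times \Z/d\Z$ is in place, all three parts are routine, so I would keep the proof short and mostly point to this labelling.
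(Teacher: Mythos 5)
Your proposal is correct and follows essentially the same route as the paper: decompose $\MMA_n(\F_q)$ by orbit size using Proposition \ref{smP1.1}, fix orbit representatives $\alpha_1,\dots,\alpha_{r_d}$, encode a Galois-commuting permutation of $\X_d$ as a vector of shifts in $\Z/d\Z$ together with an induced permutation of the orbits, and verify that composition yields the wreath-product (semidirect-product) multiplication rule. The only cosmetic difference is that the paper phrases the final identification via an explicit homomorphism $\varphi$ whose injectivity plus an order count gives the isomorphism, whereas you treat the labelling as a bijection directly; both are routine.
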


\begin{proof} Part (1)  is obvious using \ref{smP1.1}.\\
 Let $\OO_1,\ldots, \OO_{r_d}$ be the disjoint orbits in $\X_d$.
Let $\alpha_i\in \OO_i$ be chosen from the orbits (at random). Write $\Gal(\F_{q^d}:\F_q)=<\phi>\cong \Z/d\Z$.
Now if a permutation $P$ of $\X_d$ commutes with  $\Gal(\F_{q^d}:\F_q)$,
then it permutes the orbits; say $\sigma\in \perm(r_d)$ is this permutation. It sends $[\alpha_i]$ to $
[\alpha_{(i)^\sigma}]$ and thus $(\alpha_i)^P=\phi^{a_i}\alpha_{(i)^\sigma }$ where $a_i\in \Z/d\Z$.
Thus, as a set we have equality. We only need to check how the multiplication acts:

We can construct a map
\[  \begin{array}{rcl}
\varphi:G_d&\lp&  (\Z/d\Z)^{r_d}\rtimes_h (\Perm(r_d)   \\
P&\lp& (a_1,a_2,\dots, a_{r_d}; \sigma_{P}).
\end{array}\]
Write $\varphi(P)=(a_1,\ldots,a_n;\sigma)$ and $\varphi(Q)=(b_1,\ldots,b_n;\rho)$ for some $P,Q\in G_d$.  Now
$(\alpha_i)PQ=(\phi^{a_i}\alpha_{(i)^\sigma})Q=\phi^{a_i}(\alpha_{(i)^\sigma})Q
=\phi^{a_i}\phi^{b_{(i)^\sigma}}\alpha_{((i)^\sigma)^\rho}=\phi^{{a_i}+{b_{(i)^\sigma}}}\alpha_{(i)^{\sigma\rho}},$
thus
\[\varphi(PQ)=(a_1+b_{(1)^\sigma},\ldots,a_n+b_{(n)^\sigma}; \sigma \rho).\]
and since we want $\varphi(P)\cdot \varphi(Q)=\varphi(PQ)$ we get the multiplication rule for semidirect product
\[ (a_1,\ldots,a_n;\sigma)\cdot (b_1,\ldots,b_n;\rho) =(a_1+b_{(1)^\sigma},\ldots,a_n+b_{(n)^\sigma}; \sigma \rho)\]
%The multiplication $(b_1,\ldots,b_n;\rho)\cdot (a_1,\ldots,a_n;\sigma)=(b_1+a_{\rho(1)},\ldots,b_n+a_{\rho(n)}; \rho\sigma)$
%would be a multiplication of a semidirect product - so the above is an opposite multiplication.
%Applying the opposite multiplication has reverted the order $\sigma\rho\lp \rho\sigma$ though, so we need to have the opposite multiplication inside $\perm(r_d)$.

Thus $\varphi$ is a group homomorphism.
Since $Ker{(\varphi)}=\{P\in G_d: \varphi(P)=(0,0,\dots,0,id)\}=\{id_{G_d}\},$
the map is injective.
Since the orders of the groups are the same, we have an isomorphism.
\end{proof}

\section{The profinite tame automorphism group acting on orbits of a fixed size}
\label{Section5}

Our goal is to understand ``how far'' $\PTA_n(\F_q)$ can be from $\PGA_n(\F_q)$ if $n\geq 3$. Since $\GA_n(\F_q)$ is quite ungraspeable if $n\geq 3$, we are in fact trying to understand ``how far''
$\PTA_n(\F_q)$ is from $\MMA_n(\F_q)$, which squeezes in $\PGA_n(\F_q)$ in between of them.

In most of this section, we fix $m\in \N$. We have to introduce a score of notations:
\begin{itemize}
\item  $\Delta=\Gal(\F_{q^m}:\F_q)$,
\item
$\X$ is the union of the orbits of size $m$ in $\bar{\F}_q^n$,
\item  $\Omega$ is the set of orbits of size $m$ in
 $\bar{\F}_q^{n-1}$,
 \item  $\bar{\X}$ is  the quotient of $\X$ under $\Delta$, i.e. the set of orbits of size $m$,
 \item set $q=p^l$: $p$ is prime integer, $m$, $l\geq 1$,
 \item we fix the dimension  $n\geq3$.
 \end{itemize}

 The action of $\TA_n(\F_q)$ on $\bar{\F}_q^n$ restricts naturally to $\X$, and also induces an action on $\bar{\X}$.
 This means that we have a natural group homomorphism $\TA_n(\F_q)\lp \perm(\bar{\X})$.
 We denote by \texttt{G} the image of this group homomorphism.

Our first  goal is to prove the below theorem.
\begin{theorem}\label{Main}
For $n\geq3$, we have $\Alt(\bar{\X})\subseteq\texttt{G}.$
\end{theorem}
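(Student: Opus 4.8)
The plan is to show that the image group $\texttt{G}\subseteq\perm(\bar{\X})$ contains enough explicitly-constructed tame permutations to generate the alternating group, using the classical fact that a primitive permutation group containing a $3$-cycle (or more safely, in the modern treatment, any primitive group containing an element with small support) is the full alternating or symmetric group. More precisely, I would aim to exhibit, for suitable tame automorphisms, induced permutations of $\bar{\X}$ that act as a $3$-cycle on three chosen orbits while fixing all others; since the orbits of size $m$ in $\bar{\F}_q^{\,n}$ can be moved around freely (transitively, even highly transitively up to the Galois constraint), this should suffice.

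First I would set up the key building block: given an orbit $[\alpha]\in\bar{\X}$ with $\F_q(\alpha_1,\dots,\alpha_n)=\F_{q^m}$, Lemma \ref{sm1.2} lets us build, for any target values, a polynomial map in $\MA_n(\F_q)$ whose reduction is supported on $[\alpha]$. The first issue is that such a map is an \emph{endo}morphism, not obviously tame, so instead I would work with tame maps of a restricted shape — triangular maps of the form $(X_1+f(X_2,\dots,X_n),X_2,\dots,X_n)$ composed with affine maps — and track the induced permutation of $\bar{\X}$. The standard trick (cf.\ the proof of the cited Theorem that $\pi_q(\TA_n)=\Sym$ or $\Alt$) is: pick three orbits, use an affine/tame change of coordinates to move them into a convenient position (e.g.\ differing only in the first coordinate, with common last $n-1$ coordinates lying in an orbit of $\Omega$), then use a single triangular generator $X_1\mapsto X_1+f(X_2,\dots,X_n)$ with $f$ chosen (via Lemma \ref{sm1.2} applied in dimension $n-1$, so that $f$ is supported on exactly one orbit in $\bar\F_q^{\,n-1}$) to realize a prescribed permutation of the fiber over that base orbit. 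Because $n\geq 3$, the fiber has enough room, and the Galois-equivariance forces the permutation of the fiber to be ``block-diagonal'' over the orbit, which is exactly the structure described in Corollary \ref{cor3.1}; one then arranges the combinatorics so that the net effect on $\bar\X$ is a single $3$-cycle.

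Next I would argue transitivity and primitivity of $\texttt{G}$ on $\bar{\X}$: transitivity because $\GL_n$ (hence $\TA_n$) already acts transitively on lines/points in general position and one can reduce any two orbits of size $m$ to a normal form; primitivity either directly (no nontrivial $\texttt{G}$-invariant partition survives the rich supply of tame maps) or by noting $\texttt{G}$ is in fact $2$-transitive, which is stronger and easier to leverage. Then Jordan's theorem (a primitive subgroup of $\Sym(N)$ containing a $3$-cycle contains $\Alt(N)$) finishes it: $\Alt(\bar\X)\subseteq\texttt{G}$.

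The main obstacle I anticipate is the bookkeeping in the middle step — realizing a \emph{clean} $3$-cycle on $\bar\X$ rather than some more complicated even permutation, while respecting the wreath-product structure $\Z/m\Z\wr\perm(r_m)$ that $\texttt{G}$ sits inside (Corollary \ref{cor3.1}): a tame map supported near one base orbit tends to induce a permutation that mixes the ``$\Z/m\Z$ twisting part'' with the ``permutation of orbits'' part, so one must choose the supporting polynomial $f$ carefully (using part (2) of Lemma \ref{sm1.2}, which requires $\F_q(\alpha)=\F_{q^m}$) to kill the twisting and leave a pure transposition/$3$-cycle of orbits. A secondary subtlety is the case distinction on parity/characteristic lurking in the background (as in the cited $\Alt$ vs.\ $\Sym$ dichotomy for $q=2^m$), but since the claim here is only the containment of $\Alt(\bar\X)$, I would expect to sidestep it by always producing even permutations (products of two transpositions, or $3$-cycles) and never needing an odd one.
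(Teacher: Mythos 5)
Your skeleton is the same as the paper's: produce a $3$-cycle in $\texttt{G}$, prove $\texttt{G}$ is primitive (via $2$-transitivity) on $\bar{\X}$, and invoke Jordan's theorem. But both key inputs are left at the level of a hope, and in each case what is missing is exactly where the difficulty of the theorem lies. For transitivity, your justification that ``$\GL_n$ (hence $\TA_n$) already acts transitively on lines/points in general position'' fails in this setting: tame maps are defined over $\F_q$, so an $\F_q$-linear map sends a point such as $(t,0,\dots,0)$ with $\F_q(t)=\F_{q^m}$ to a point all of whose coordinates lie in the $\F_q$-span of $t$; no linear (or affine) map over $\F_q$ can freely change the subfield pattern $([\F_q(r_1):\F_q],\dots,[\F_q(r_n):\F_q])$ of a point. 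Establishing $2$-transitivity of $\texttt{G}$ on $\bar{\X}$ is in fact the bulk of the paper's argument: it needs genuinely polynomial triangular maps built from the interpolation Lemma \ref{def1.4}, whose part (2) is only usable when some coordinate of the base point generates $\F_{q^m}$. The hard cases are precisely points none of whose coordinates generates $\F_{q^m}$ (only the lcm of the coordinate degrees is $m$), which the paper handles by an induction on the lexicographically ordered type (Lemmas \ref{1.14}--\ref{1.1818} feeding into Propositions \ref{1.11} and \ref{1.725}), and pairs of weakly conjugate points ($[r_i]=[u_i]$ for all $i$), which obstruct fixing $[u]$ while moving $[r]$ and are treated separately (Lemma \ref{lemma1.10}). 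Your proposal neither addresses these obstructions nor offers a substitute argument, so the primitivity hypothesis of Jordan's theorem is unproven.

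The $3$-cycle step also has a concrete problem beyond bookkeeping. A single triangular generator $(X_1+f(X_2,\dots,X_n),X_2,\dots,X_n)$ has order $p$ in characteristic $p$, and this persists under conjugation by affine or tame coordinate changes; hence the permutation it induces on $\bar{\X}$ is a product of $p$-cycles and can never be a $3$-cycle unless $p=3$. So ``choosing $f$ carefully'' for one shear cannot produce the element of small support you need. The paper's Lemma \ref{3-cycles} gets around this by taking two shears $s$ and $d$ in different coordinate directions, each supported via Lemma \ref{def1.4} on a thin set of orbits, and showing that the commutator $w=s^{-1}d^{-1}sd$ moves exactly three orbits cyclically (this is also where $n\geq 3$ and the choice of base point $(0,\dots,0,t)$ enter). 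You correctly anticipated that realizing a clean $3$-cycle is the obstacle, but the commutator construction (or some equivalent device producing an element whose support on $\bar{\X}$ has size three) is the missing idea, not a detail to be arranged afterwards.
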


The rest of this section is devoted to proving this theorem. The generic outline of the proof is as follows:
It is enough to show that \texttt{G} is primitive by a theorem of Jordan:

\begin{theorem}\label{Jordan}(Jordan)
Let $G$ be a primitive subgroup of $S_n$. Suppose $G$ contains a 3-cycle. Then $G$ contains the alternating subgroup $A_n$.
\end{theorem}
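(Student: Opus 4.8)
The plan is to reduce Jordan's statement to the structure of the subgroup generated by the $3$-cycles. Let $N\trianglelefteq G$ be the subgroup generated by all $3$-cycles contained in $G$; it is normal because any conjugate of a $3$-cycle is a $3$-cycle, and it is nontrivial by hypothesis. Since $G$ is primitive and $N\neq 1$, the orbits of $N$ form a $G$-invariant partition of $\{1,\dots,n\}$, hence a trivial one; as $N\neq 1$ the orbits are not all singletons, so $N$ is transitive. Because $N$ is generated by even permutations, $N\subseteq A_n$, so it suffices to prove $N=A_n$, i.e. $A_n\subseteq G$. (Throughout we assume $n\geq 3$, as otherwise $G$ contains no $3$-cycle and there is nothing to prove.)

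The engine is the elementary fact that two $3$-cycles whose supports share at least two points generate the full alternating group on the at most four points they cover: if the supports coincide this is immediate, and if they meet in exactly two points a short computation shows their product is a double transposition, so the subgroup they generate has order divisible by $6$ inside $A_4$ and is therefore all of $A_4$. From this I would deduce that the relation on $\{1,\dots,n\}$ given by ``$i=j$, or some $3$-cycle of $N$ moves both $i$ and $j$'' is an equivalence relation; transitivity is verified by case analysis on the overlap of the supports of the two relevant $3$-cycles, using in the hardest case (overlap in a single point) a conjugation of one $3$-cycle by the other to produce a $3$-cycle moving the two endpoints. Every class that meets the support of some $3$-cycle has size $\geq 3$, and each $3$-cycle of $N$ fixes pointwise every point outside its own class, so $N$ preserves this partition; since $N$ is transitive and $n\geq 3$ it fixes no point, so there is a single class, equal to $\{1,\dots,n\}$.

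It remains to bootstrap from ``all pairs are linked by $3$-cycles'' to ``$N$ contains every $3$-cycle''. I would take $B\subseteq\{1,\dots,n\}$ maximal such that $N$ contains every $3$-cycle supported in $B$, so that $\Alt(B)\subseteq N$; such a $B$ exists with $|B|\geq 3$ because $N$ contains at least one $3$-cycle. If $B\neq\{1,\dots,n\}$, choose $j\notin B$ and $i\in B$; by the single-class property there is a $3$-cycle $\rho\in N$ moving both $i$ and $j$. Conjugating $\rho$ by elements of $\Alt(B)$ (transitive on $B$) and applying the two-shared-points lemma repeatedly — first replacing $\rho$, if necessary, by a $3$-cycle whose support meets $B$ in two points, then producing all $3$-cycles $(u\,v\,j)$ with $u,v\in B$ — one sees that $N$ contains $\Alt(B\cup\{j\})$, contradicting maximality of $B$. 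Hence $B=\{1,\dots,n\}$, so $N=A_n$ and $A_n\subseteq G$.

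The routine but genuinely fiddly part is the case analysis: the split into ``overlap in two points'' versus ``overlap in one point'' for the transitivity of the equivalence relation, and the sub-case of the bootstrap where $\rho$ drags in a point outside $B$ (handled by conjugating to get two $3$-cycles $(u_1\,j\,\ell),(u_2\,j\,\ell)$ sharing $\{j,\ell\}$, which in turn yield a $3$-cycle meeting $B$ in two points). Each case closes with a one-line $3$-cycle computation plus a conjugation, but there are several of them and one must be careful about which points are assumed distinct. Everything else — normality of $N$, transitivity of $N$ via primitivity, and the inclusion $N\subseteq A_n$ — is immediate.
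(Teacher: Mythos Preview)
The paper does not give a proof of this theorem: it is quoted as a classical result of Jordan and used as a black box in the proof of Theorem~\ref{Main}. There is therefore no proof in the paper to compare your attempt against.

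That said, your argument is a correct and self-contained proof. The three ingredients are all sound: (i) a nontrivial normal subgroup of a primitive group is transitive, so the subgroup $N$ generated by the $3$-cycles of $G$ is transitive; (ii) the relation ``$i=j$ or some $3$-cycle of $N$ moves both $i$ and $j$'' is an equivalence relation preserved by $N$, hence has a single class; (iii) a maximal $B$ with $\Alt(B)\subseteq N$ must be everything, via conjugation of a $3$-cycle $\rho$ through $j\notin B$ by elements of $\Alt(B)$. Two small points worth making explicit if you write this out: in the single-overlap case of (ii), the conjugate $\sigma_2\sigma_1\sigma_2^{-1}$ is the desired $3$-cycle through $i$ and $k$ precisely because the overlap hypothesis guarantees $i$ and the third point of $\sigma_1$ lie outside $\operatorname{supp}(\sigma_2)$; and in (iii), when $|B|=3$ the group $\Alt(B)$ is only transitive rather than $2$-transitive on $B$, but cycling by the single $3$-cycle on $B$ already yields enough conjugates of $\rho$ to generate $A_4$ on $B\cup\{j\}$. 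With those checks, the proof is complete.
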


{\bf Overview of the proof of theorem \ref{Main}.}\\
We will prove that the group \texttt{G} contains a 3-cycle (lemma \ref{3-cycles}) and is 2-transitive  (which implies primitive). The latter is the most complicated  part, requiring some delicate induction arguments: lemmas \ref{0.6.10}-\ref{lemma1.10} are preparations to prove  proposition \ref{1.11} (2-transitivity for a large class of points). Along with this, lemmas \ref{1.14}-\ref{1.1818} are
preparations to prove proposition \ref{1.725} (2-transitivity in general).
The difficulty in this proof are obviously its length and occasional technicality, but also in the rather complicated induction (the lemmas \ref{1.14}-\ref{1.1818}) which makes the proof quite nontrivial.

\begin{definition}
(i) Let $a\in \Omega$. Then define $$ \tau_{a,1}:=(X_1+f_{a,1}(X_2,\dots,X_n), X_2, \dots, X_n)$$ (ii) Let $i\in\{2, \dots, n\}$. Define
$$\tau_i:=(X_i, X_2,\dots,X_{i-1},X_1,X_{i+1},\dots,X_n),$$ the map interchanging $X_i$ and $X_1$.
\end{definition}

\begin{definition}
In this article we define the lexicographic ordering of two vectors ${u}:=(u_1, \dots, u_n), {v}:=(v_1, \dots, v_n)$, notation $u\geq_{Lex} v$, if there exists $m\in \N, 0\leq m\leq n$ such that
$u_m>v_m, (u_{m+1}, u_{m+2},\ldots, u_n)=(v_{m+1},v_{m+2},\ldots,v_n)$. (i.e. the weight is at the ``head'' of a vector, not the tail).
\end{definition}

%IS IT POSSIBLE TO CONTRACT/SHORTEN THE 2 UPCOMING  EXAMPLES TOGETHER WITH THE BELOW LEMMA?

\begin{lemma} \label{3-cycles}

The group \texttt{G} contains 3-cycles
\end{lemma}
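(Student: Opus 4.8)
The plan is to produce, inside $\TA_n(\F_q)$, an explicit tame map whose induced permutation on $\bar{\X}$ (the set of Galois orbits of size $m$ in $\bar{\F}_q^n$) is a $3$-cycle, i.e.\ moves exactly three orbits cyclically and fixes every other orbit of size $m$. The natural building blocks are the maps $\tau_{a,1}=(X_1+f_{a,1}(X_2,\dots,X_n),X_2,\dots,X_n)$ from the preceding definition, where $a\in\Omega$ and $f_{a,1}$ is one of the ``delta/indicator'' polynomials supplied by Lemma \ref{sm1.2}: $f_{a,1}$ vanishes off the orbit $[a]\subseteq\bar{\F}_q^{n-1}$ and equals $1$ on $[a]$. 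The key observation is that $\tau_{a,1}$ acts as a pure translation in the $X_1$-coordinate, by the amount $f_{a,1}(x_2,\dots,x_n)\in\{0,1\}$, so it only affects points whose last $n-1$ coordinates lie in the orbit $[a]$, and on the $X_1$-fibre over such a point it acts as $x_1\mapsto x_1+1$.

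First I would fix an orbit $[a]\in\Omega$ of size exactly $m$ in $\bar{\F}_q^{n-1}$ (such orbits exist once $q^m$ is large enough; for small $m$ one argues directly, and in any case $\bar{\X}$ and the statement only concern orbits of size $m$, so we may take $m$ with such an orbit present). Over this orbit, the $X_1$-fibre together with $[a]$ gives $\F_q(a)$-worth of points, and $\tau_{a,1}$ restricted there permutes the $X_1$-coordinate by $+1$. The $\Gal$-action on $\bar{\F}_q^n$ and the fact that $\tau_{a,1}\in\TA_n(\F_q)$ (so it commutes with $\Gal$) mean this descends to a permutation of the finitely many orbits of size $m$ that sit over $[a]$. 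Concretely, the orbits of size $m$ lying above $[a]$ are indexed by certain $x_1\in\bar{\F}_q$, and $\tau_{a,1}$ shifts the first coordinate $x_1\mapsto x_1+1$; so on $\bar{\X}$ it acts as a product of cycles all of the same length, namely the additive order of $1$ in the relevant residue field, which is $p$ (the characteristic). To isolate a single $3$-cycle (when $p\ne 3$) or to build one out of $p$-cycles, I would combine several such maps: take $\tau_{a,1}$ and a conjugate/variant $\tau_{a',1}$ (or $\tau_{a,1}$ composed with a permutation $\tau_i$) whose supports overlap in a controlled way, so that the commutator or product of the two permutations on $\bar{\X}$ is supported on a small number of orbits. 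This is the standard trick: two permutations that are each a product of disjoint $p$-cycles, arranged so that their supports meet in exactly one or two orbits, have a commutator that is a $3$-cycle (for $p\ge 3$; for $p=2$ one instead uses that $\TA_n(\F_q)$ already realizes all of $\Sym$ or $\Alt$ by Theorem 1, or uses a length-$3$ product of transposition-like maps).

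The main obstacle is the bookkeeping of exactly which orbits of size $m$ in $\bar{\F}_q^n$ lie over a given orbit $[a]$ of size $m$ in $\bar{\F}_q^{n-1}$, and checking that the shift $x_1\mapsto x_1+1$ really does permute them in cycles of length $p$ (not shorter, not intertwined with the Galois action in an unexpected way) — the subtlety being whether $x_1$ and $x_1+1$ generate the same field extension and whether the stabilizers match. Once that is pinned down, I would choose the two indicator-polynomial maps so that their $\bar{\X}$-supports intersect in precisely one common orbit, whence the commutator is a $3$-cycle. I expect the cleanest writeup picks $a\in\Omega$ and an element $\alpha\in\bar{\F}_q^{n-1}$ in that orbit with $\F_q(\alpha)=\F_{q^m}$, uses $f_{\alpha,1}$ from Lemma \ref{sm1.2}(1), and notes that $\tau_{\alpha,1}^{\,p}=\mathrm{id}$ on $\bar{\X}$ while $\tau_{\alpha,1}$ is nontrivial, then manufactures a second such map overlapping in one orbit; the commutator of the two is the desired $3$-cycle, completing the proof.
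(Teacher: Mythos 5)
Your plan is essentially the paper's proof: the paper takes $a=(0,\dots,0,t)$ with $\F_{q^m}=\F_q(t)$, sets $s=\tau_{a,1}$ and $d=\tau_2\tau_{a,1}\tau_2$, notes that their supports on $\bar{\X}$ are sets $L_1,L_2$ meeting in the single orbit $[(0,\dots,0,t)]$, and verifies directly that the commutator $w=s^{-1}d^{-1}sd$ is a $3$-cycle --- exactly your ``two indicator-polynomial shears overlapping in one orbit, take the commutator'' idea. The only superfluous part of your proposal is the hedging for $p\in\{2,3\}$: the one-orbit-overlap commutator argument is independent of the cycle lengths of $s$ and $d$, so it works in every characteristic, and the proposed fallback via Theorem 1 would not help anyway, since that theorem concerns the action on $\F_q^n$ rather than on $\bar{\X}$.
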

\begin{proof} Consider $\F_{q^m}=\F_q(t)$. Let $a:=(0,\dots,0,t)\in \Omega$ and let \[s:=\tau_{a,1}=(X_1+g_{a,1}(X_2,\dots,X_n), X_2, \dots, X_n),\]
\[d:=\tau_2\tau_{a,1}\tau_2=(X_1, X_2+g_{a,1}(X_1,X_3,\dots,X_n), X_3,\dots,X_n).\]

Consider $L_1=\{[(a_1,0,0,\dots,0,a_n)]|a_n\in [t]$ and $a_1\in \F_{q^m}\}$ and $L_2=\{[(0,a_2,0,\dots,0,a_n)]|a_2\in \F_{q^m}$ and $a_n\in [t]\}$, where $L_1$, $L_2$ are subsets of $\bar\X$ and $[t]=\{\phi(t) : \phi \in \Delta\}$.
Then $s$ permutes only the set $L_1$ and $d$ permutes only the set $L_2$. Both $s$ and $d$ are cyclic of order $p=\kar(\F_{q^m})$ on $L_1$ and $L_2$ respectively. Let $w=s^{-1}d^{-1}sd$. Then $w$ acts trivially on $\bar\X\setminus(L_1\bigcup L_2)$ and nontrivially only on a subset of $L_1\bigcup L_2$. Now if $b\notin L_2$ and $s(b)\notin L_2$,  then since d works only on elements of $L_2$ one can check easily that $w(b)=b$ . Similarly if $b\notin L_1$ and $d(b)\notin L_1$ then $w(b)=b$. The other cases include:\\
{\em (1)} $b\notin L_2$ and $s(b)\in L_2$ (the element$ D:=[(-1,0,\dots,0,t)])$,\\
{\em (2)} $b\notin L_1$ and $d(b)\in L_1$ (the element $E:= [(0,-1,0\dots,0,t)])$,\\
{\em (3)} $b\in L_1$ and $d(b)\in L_2$ $(F:=[(0,\dots,0,t)])$.\\
Since $s(D)=F$,$s(E)=E$,$d(E)=F$,$d(D)=D$,$s(F)\notin L_2$,$d(F)\notin L_1$. Using this observation we see that $w(D)=E,w(E)=F,w(F)=D$ and $w$ is the required $3$-cycle. \end{proof}

Here starts the technical proof of 2-transitivity (proposition \ref{1.725}).
We slowly start by connecting more and more pairs $([r],[u])$ and $([s],[v])$ where $[r],[s],[v],[u]\in \bar{X}$. (For most lemmas, $[v]=[u]$.)
We introduce the following definition:

\begin{definition} We say that $u,v\in \F_{q^m}^n$ are weakly conjugate (or, $u$ is a weak conjugate of $v$) if
$[u_i]=[v_i]$ for all $1\leq i\leq n$. We denote this as $u\approx v$.
We also denote $u\not \approx v$ for ``not $u\approx v$''.
\end{definition}

Note that $[u]=[v]$ implies $u\approx v$, but not the other way around. Similarly, $u\not \approx v$ implies $[u]\neq [v]$. In the below lemmas we will have to give special attention to cases where a  pair of $r,s,u$ is weakly conjugate, as this complicates things.

\begin{lemma}\label{0.6.10}
Let $r,u\in \X$ such that $[r]\not =[u]$. Assume $\F_{q^m}=\F_q(r_i)$ and $[r_j]\neq[u_j]$ for some $i,j\in\{1,2,\dots,n\}$. Then for any $k\in\{1,2.\dots,n\}$ with $k\neq i,j$ there exist $P\in TA_n(\F_q)$ such that $P[r]=[(r_1,\dots,r_{k-1},v,r_{k+1},\dots,r_n)]$ for any $v\in\F_{q^m}$ and $P[u]=[u]$.
\end{lemma}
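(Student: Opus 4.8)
The plan is to build the desired $P$ as a composition of two strictly Jonqui\`eres maps of the type $\tau_{a,1}$ (after relabelling variables by the permutations $\tau_i$), exploiting Lemma \ref{sm1.2} to manufacture polynomials in $\F_q[Y_1,\dots,Y_e]$ that vanish on one $\Delta$-orbit but take a prescribed value on another. Concretely, since $k\neq i,j$, I would first conjugate so that $k$ plays the role of the first coordinate: set $Q=\tau_k$, so that $Q$ interchanges $X_1$ and $X_k$, and it suffices to find a tame map $P'$ fixing $[Qu]$ and sending $[Qr]$ to $[Qr]$ with its first coordinate replaced by an arbitrary $v\in\F_{q^m}$; then $P=QP'Q$ works because $Q$ merely permutes coordinates and $\Delta$-orbits are preserved coordinatewise. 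So without loss of generality take $k=1$.

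Now the core step. I want $P'$ to add to the first coordinate a polynomial $h(X_2,\dots,X_n)$ that evaluates (up to Galois conjugacy) to $v-r_1$ on the orbit of $(r_2,\dots,r_n)$ and to $0$ on the orbit of $(u_2,\dots,u_n)$. The obstruction is that these two orbits in $\bar\F_q^{n-1}$ might coincide or be weakly conjugate, so one must first separate them. This is exactly where the hypotheses are used: because $[r_j]\neq[u_j]$ and $j\neq 1$, the $j$-th coordinates of $(r_2,\dots,r_n)$ and $(u_2,\dots,u_n)$ lie in different $\Delta$-orbits, hence the two points lie in genuinely distinct orbits of $\bar\F_q^{n-1}$ — and moreover this separation is detected by a single coordinate, which is what makes Lemma \ref{sm1.2} applicable. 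Also, since $\F_{q^m}=\F_q(r_i)$ with $i\neq 1$, the value $v-r_1$ (an arbitrary element of $\F_{q^m}$) is a $\F_q$-polynomial expression in $r_i$; this is the "$\F_q(\alpha)=\F_{q^m}$'' hypothesis needed in part (2) of Lemma \ref{sm1.2} to hit a prescribed value $b$, not merely $0$ or $1$.

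So I would apply Lemma \ref{sm1.2}(2) with $\alpha=(r_2,\dots,r_n)\in(\F_{q^m})^{n-1}$ (using that $\F_q(\alpha)=\F_{q^m}$ since it already contains $r_i$) to get $f\in\F_q[Y_2,\dots,Y_n]$ with $f(\alpha)=v-r_1$ and $f\equiv 0$ outside $[\alpha]$; in particular $f$ vanishes on $[(u_2,\dots,u_n)]$ because that orbit is $\neq[\alpha]$. Then set $P'=(X_1+f(X_2,\dots,X_n),X_2,\dots,X_n)\in\BAs_n(\F_q)\subseteq\TA_n(\F_q)$. Evaluating: $P'(u)=u$ (all but the first coordinate are untouched, and $f(u_2,\dots,u_n)=0$), so $P'[u]=[u]$; and $P'(r)=(r_1+(v-r_1),r_2,\dots,r_n)=(v,r_2,\dots,r_n)$, so $P'[r]=[(v,r_2,\dots,r_n)]$. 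Undoing the relabelling gives the stated $P=\tau_k P'\tau_k$. The main thing to be careful about is the bookkeeping that the two orbits in $\bar\F_q^{n-1}$ are distinct in the exact sense required by Lemma \ref{sm1.2} — i.e. that $[r]\neq[u]$ together with $[r_j]\neq[u_j]$ really gives $(r_2,\dots,r_n)\not\approx(u_2,\dots,u_n)$ in the relevant coordinate — and, in the weakly-conjugate edge cases, checking that $f$ still vanishes on the whole orbit $[(u_2,\dots,u_n)]$ rather than just at the point, which is automatic from the $[\beta]\neq[\alpha]\Rightarrow f(\beta)=0$ clause of the lemma. That is the only delicate point; the rest is direct substitution.
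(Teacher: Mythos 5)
Your proposal is correct and is essentially the paper's own argument: the paper directly takes $a=(r_1,\dots,\hat r_k,\dots,r_n)$ and sets $P:=(X_1,\dots,X_k+f_{a,v-r_k}(X_1,\dots,\hat X_k,\dots,X_n),\dots,X_n)$ via Lemma \ref{def1.4}(2), using $k\neq i$ so that $a$ contains the generator $r_i$ and $k\neq j$ so that $[a]\neq[(u_1,\dots,\hat u_k,\dots,u_n)]$, exactly as you do. Your extra conjugation by $\tau_k$ to put $k$ in the first slot is only cosmetic and does not change the argument.
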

\begin{proof}
%Fix $a=(r_1,\dots,r_{k-1},\tilde{r}_k,r_{k+1},\dots,r_n)$ and define a polynomial map $p\in\F_q[X_1,\dots,\widehat{X_k},\dots,X_n]$ such that $p(a)=v-r_k$, $v\in\F_{q^m}$.
Fix $a=(r_1,\dots,\hat{r}_k,\dots,r_n).$ Define $P:=(X_1,\dots,X_k+f_{a,v-r_k}(X_1,\dots,\widehat{X_k},\dots,X_n),\dots,X_n)$, where $f_{a,v-r_k}$ is as in lemma \ref{def1.4}. (Note that since $r_i$ generates $\F_{q^m}$, we can indeed apply this lemma as $a=(\dots,r_i,\dots).$)  Hence we have $P([r])=[(r_1,\dots,r_{k-1},v,r_{k+1},\dots,r_k)]$ and $P([u])=[u]$.
\end{proof}
%\begin{remark}\label{0.6.10}
% Let $r,u\in \X$ s.t. $r=(r_1,\dots, r_n)$ and $u=(u_1,\dots,u_n)$  are in different orbits with $\F_{q^m}=\F_q(r_i)$ and $[r_j]\neq[u_j]$ for $i,j\in\{1,2.\dots,n\}$. Then for any $k\in\{1,2.\dots,n\}$ with $k\neq i,j$ we can define the polynomial map $p\in\F_q[X_1,\dots,\widehat{X_k},\dots,X_n]$ such that $p(r_1,\dots,\widehat{r_k},\dots,r_n)=v-r_k$ for any $v\in\F_{q^m}$.
%For any $a\in\Omega$ define the tame map $P=(X_1,\dots,X_k+h_{((a),p(a))}(X_1,\dots,\widehat{X_k},\dots,X_n),\dots,X_n)$, then we get $P([r])=[(r_1,\dots,r_{k-1},v,r_{k+1},\dots,r_k)]$ and $P([u])=[u]$.
%\end{remark}
We will use this lemma in the proof of following lemma several times. The below lemma is elementary but lenghty.
\begin{lemma} \label{1.61}
Let $s,r,u\in \X$ s.t. $s=(s_1,\dots, s_n)$, $r=(r_1,\dots, r_n)$, and $u=(u_1,\dots,u_n)$  be in different orbits with $\F_{q^m}=\F_q(r_1)=\F_q(s_1)$ and suppose $r\not \approx u, s\not \approx u$. Then there exist $F\in TA_n(\F_q)$ s.t. $F([r])=[s]$ and $F([u])=[u]$.
\end{lemma}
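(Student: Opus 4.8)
The goal is to move $[r]$ to $[s]$ by a tame map while keeping $[u]$ fixed, under the hypotheses that $r,s,u$ lie in distinct orbits, $\F_{q^m}=\F_q(r_1)=\F_q(s_1)$, and neither $r$ nor $s$ is weakly conjugate to $u$. My plan is to use Lemma \ref{0.6.10} as the basic ``coordinate-changing'' move and to perform the transformation one coordinate at a time, passing through a convenient intermediate point. First I would fix, using $r\not\approx u$, an index $j$ with $[r_j]\neq[u_j]$; since $\F_{q^m}=\F_q(r_1)$, Lemma \ref{0.6.10} (with $i=1$) lets me change any coordinate $r_k$ with $k\neq 1,j$ to an arbitrary value while fixing $[u]$. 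Similarly, using $s\not\approx u$, pick $j'$ with $[s_{j'}]\neq[u_{j'}]$, and note $\F_{q^m}=\F_q(s_1)$ gives the analogous moves on the $s$-side.

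The strategy is then: (1) starting from $[r]$, keep the first coordinate and coordinate $j$ fixed and rewrite all the other coordinates to match those of some auxiliary point $w$ chosen so that $w_1=r_1$ and $w_j=r_j$ but $w$ is arranged to also be compatible with reaching $[s]$; (2) once enough coordinates agree, switch the ``anchor'' index from the $r$-side apparatus to the $s$-side apparatus and continue rewriting. The subtlety — and the reason the lemma is stated as ``elementary but lengthy'' — is bookkeeping about which indices are available as anchors at each stage: Lemma \ref{0.6.10} needs \emph{two} protected indices ($i$ generating the field and $j$ distinguishing from $u$), so I can only freely rewrite $n-2$ coordinates at a time, and I must make sure that after the rewriting the protected coordinates can themselves eventually be adjusted by switching anchors. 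Concretely, I would first use moves anchored at $(1,j)$ to make $r_k=s_k$ for all $k\neq 1,j$, reducing to the case where $r$ and $s$ differ only in coordinates $1$ and $j$; then, choosing a fresh index $\ell\neq 1,j$ and using $[r_\ell]=[s_\ell]\neq[u_\ell]$ (if this fails for all such $\ell$, a short separate argument handles it, using $n\geq 3$ and $r\not\approx u$), re-anchor at $(1,\ell)$ or $(j,\ell)$ to fix coordinate $j$, then finally re-anchor to fix coordinate $1$, at each step checking that the point stays in an orbit distinct from $[u]$'s and that $[u]$ is genuinely fixed (not merely setwise).

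I expect the main obstacle to be exactly the degenerate configurations: when $r$ and $u$ (or $s$ and $u$) agree in \emph{all but one} coordinate, so that only a single index $j$ is available to separate from $u$, and that index coincides with the one I need to modify. Handling this requires an auxiliary maneuver — e.g. first perturbing one of the safe coordinates of $r$ to create a \emph{second} index separating from $u$, do the main rewriting, then undo the perturbation — all while never passing through the orbit $[u]$. Verifying that such a detour exists uses $n\geq 3$ (so there is always a spare coordinate) together with the fact that $\F_{q^m}=\F_q(r_1)$ gives full freedom in non-anchor coordinates via Lemma \ref{def1.4}. The rest is a finite, if tedious, case analysis on how many coordinates of $r$, $s$, $u$ coincide or are $\Delta$-conjugate, composing the moves from Lemma \ref{0.6.10} in the right order; none of the individual steps is deep, but assembling them so that every intermediate point avoids $[u]$'s orbit and every invocation of Lemma \ref{0.6.10} has its hypotheses met is where the length comes from.
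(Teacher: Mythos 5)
Your overall route is the same as the paper's: iterate Lemma \ref{0.6.10} with a pair of protected anchor indices (a generating coordinate and a coordinate separating from $u$), and do a case analysis on which indices separate $r$, $s$ from $u$, with a detour for the degenerate configurations. However, your concrete ordering of moves has a genuine gap at the last step. After you rewrite all coordinates $k\neq 1,j$ to $s_k$ (anchored at $(1,j)$) and then fix coordinate $j$ (anchored at $(1,\ell)$), the current point is $(r_1,s_2,\dots,s_n)$, and the only coordinate guaranteed to generate $\F_{q^m}$ is coordinate $1$ itself --- precisely the coordinate you now want to overwrite. Lemma \ref{0.6.10} requires the generating index $i$ to be different from the index $k$ being changed, and Lemma \ref{def1.4}(2) requires the anchor tuple to generate $\F_{q^m}$; neither applies, since $s_2,\dots,s_n$ may all lie in a proper subfield. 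Concretely, if $s=(s_1,0,\dots,0)$, then an elementary map $(X_1+f(X_2,\dots,X_n),X_2,\dots,X_n)$ with $f\in\F_q[X_2,\dots,X_n]$ can only add an element of $\F_q[s_2,\dots,s_n]=\F_q$ to the first coordinate, so it cannot move $r_1$ to $s_1$ when $s_1-r_1\notin\F_q$. Your proposed auxiliary maneuver (perturbing a safe coordinate to create a second index separating from $u$) cures separator availability, not this generator availability, so it does not repair the final step.

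The missing idea is the planting maneuver the paper uses: before touching coordinate $1$, overwrite a spare coordinate with the generator $s_1$ (this is allowed while still anchored at $(1,j)$), e.g.\ pass through $[(r_1,s_1,s_3,\dots,s_n)]$; then coordinate $1$ can be changed to $s_1$ anchored at the planted generator (this is where the paper must also check that the relevant truncations avoid $[u]$'s orbit, giving its Subcases 1.2 and 4.2.2), and finally the planted coordinate is cleaned up to $s_2$ using the new generator $s_1$ now sitting in position $1$ --- the paper even states this explicitly in Subcase 4.2 (``we send $r_3$ to $s_1$, because $s_1$ is generator of $\F_{q^m}$ and we need this to send $r_1$ to $s_1$''). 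Your phrase about choosing $w$ ``compatible with reaching $[s]$'' is where this planting should have entered, but the concrete plan you then give (all $k\neq 1,j$ first, then $j$, then $1$ last) omits it, and without it the last re-anchoring has no valid anchor. With that reordering inserted, your argument matches the paper's and can be completed.
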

\begin{proof}
We divided our lemma into the following five cases.\\
{Case 1}. $[r_1]\neq [u_1]$ and $[s_1]\neq [u_1]$.\\
{Case 2}. $[r_1]\neq [u_1]$ and $[s_i]\neq [u_i]$ for some $2\leq i\leq n$.\\
{Case 3}. $[s_1]\neq [u_1]$ and $[r_i]\neq [u_i]$ for some $2\leq i\leq n$.\\
{Case 4}. $[r_i]\neq [u_i]$ and $[s_j]\neq [u_j]$ for some $i,j\in \{2,3,\dots,n\}.$

\textbf{Case 1}. $[r_1]\neq [u_1]$ and $[s_1]\neq [u_1]$.\\
As $\F_{q^m}=\F_q(r_1)$ and $[r_1]\neq [u_1]$ so by applying the tame map as in lemma \ref{0.6.10} several times we map the orbit $[r]$ to $[r_1,s_1,s_3\dots,s_n]$ and $[u]$ remains unchanged.\\
\textsf{Subcase 1.1}. If $[(s_1,s_3,\dots,s_n)]\neq[(u_2,\dots,u_n)].$ Fix $a:=(s_1,s_3,\dots,s_n)$. Define$P_1:=(X_1+f_{a,s_1-r_1}(X_2,\ldots, X_n), X_2,\ldots,X_n)$ where $f_{a,s_1-r_1}$ is as in lemma \ref{def1.4}. (Note that since $s_1$ generates $F_{q^m}$, we can indeed apply this lemma as $a=(s_1,\ldots)$.) Thus $P_1[(r_1,s_1,s_3,\dots,s_n)]=[(s_1,s_1,s_3,\dots,s_n)]$ and $P_1[u]=[u].$ Now as $\F_{q^m}=\F_q(s_1)$ and $[s_1]\neq [u_1]$ so by applying the tame map as in lemma \ref{0.6.10}  we map the orbit $[(s_1,s_1,s_3,\dots,s_n)]$ to $[(s_1,s_2,s_3,\dots,s_n)]$ and $[u]$ remains unchanged.\\
\textsf{Subcase 1.2.} If $[(s_1,s_3,\dots,s_n)]=[(u_2,\dots,u_n)].$ In this case we have $s_1\in[u_2]$ and hence $u_2$ is also a generator of $\F_{q^m}$.
Using lemma \ref{0.6.10} and $[r_1]\neq[u_1]$ we can send $[u]$ to $[u_1,u_2,u_1,u_4,\dots,u_n]$ and $[(r_1,s_1,s_3,\dots,s_n)]$ remains unchanged.
Now with $r_1$ generator of $\F_{q^m}$ and $[r_1]\neq[u_1]$ send $[(r_1,s_1,s_3,\dots,s_n)]$ to $[(r_1,s_1,s_1,s_4,\dots,s_n)]$ and $[u_1,u_2,u_1,u_4,\dots,u_n]$ remains unchanged by lemma \ref{0.6.10}. With $s_1$ generator of $\F_{q^m}$ and $[s_1]\neq[u_1]$ send $[(r_1,s_1,s_1,s_4,\dots,s_n)]$ to $[(s_1,s_2,s_1,s_4,\dots,s_n)]$ and $[u_1,u_2,u_1,u_4,\dots,u_n]$ remains unchanged by applying lemma \ref{0.6.10} two times.  With $s_1$ generator of $\F_{q^m}$ and $[s_1]\neq[u_1]$ send $[(s_1,s_2,s_1,s_4,\dots,s_n)]$ to $[(s_1,s_2,s_3,s_4,\dots,s_n)]$ and $[u_1,u_2,u_1,u_4,\dots,u_n]$ remains unchanged by applying lemma \ref{0.6.10}.
 With $u_2$ generator of $\F_{q^m}$ and $[s_1]\neq[u_1]$ send $[u_1,u_2,u_1,u_4,\dots,u_n]$ to $[u]$ and $[s]$ remains unchanged by applying lemma \ref{0.6.10}.

 \textbf{Case 2}. $[r_1]\neq[u_1]$ and $[s_i]\neq[u_i]$ for $2\leq i\leq n$.\\
  Since $n\geq 3$ there exist $k\in\{2,3,\dots,n\}$ such that $k\neq i$. Without loss of generality (simplifying notations) we assume $i=2, k=3$. With $r_1$ generator of $\F_{q^m}$ and $[r_1]\neq[u_1]$, map $[r]$ to $[(r_1,s_2,s_1,s_4,\dots,s_n)]$ and $[u]$ remains unchanged by applying lemma \ref{0.6.10} several time. With $[s_2]\neq[u_2]$ and $s_1$ generator of $\F_{q^m}$, map $[(r_1,s_2,s_1,s_4,\dots,s_n)]$ to $[(s_1,s_2,s_1,s_4,\dots,s_n)]$ and $[u]$ remains unchanged by lemma \ref{0.6.10}.
 With $[s_2]\neq[u_2]$ and $s_1$ generator of $\F_{q^m}$, map\\
  $[(s_1,s_2,s_1,s_4,\dots,s_n)]$ to $[(s_1,s_2,s_3,s_4,\dots,s_n)]$= $[s]$ and $[u]$ remains unchanged by lemma \ref{0.6.10}.

 \textbf{Case 3}.  $[s_1]\neq[u_1]$ and $[r_i]\neq[u_i]$ for $2\leq i\leq n$.\\
  By case 2 there exist a tame map $F\in TA_n(\F_q)$ such that $F[s]=[r]$ and $F[u]=[u]$, hence $F^{-1}[r]=[s]$ and $F^{-1}[u]=[u].$

 \textbf{Case 4}. $[r_i]\neq[u_i]$ and $[s_j]\neq[u_j]$ for some $i,j\in\{2,3,\dots,n\}$.\\
 \textsf{Subcase 4.1}. $[r_i]\neq[u_i]$ and $[s_j]\neq[u_j]$ for some $i,j\in\{2,3,\dots,n\}$ with $i\neq j.$\\
 Without loss of generality (simplifying notations) we assume $i=2, j=3$.
With $\F_{q^m}=\F_q(r_1)$ and $[r_2]\neq [u_2]$,  send the orbit $[r]$ to $[(r_1,r_2,s_3,s_4,\dots,s_n)]$ and the orbit $[u]$ remains unchanged by applying the lemma \ref{0.6.10} several times. With $\F_{q^m}=\F_q(r_1)$ and $[s_3]\neq [u_3]$, map $[(r_1,r_2,s_3,s_4,\dots,s_n)]$ to $[(r_1,s_1,s_3,s_4,\dots,s_n)]$ and $[u]$ remains unchanged by lemma \ref{0.6.10}. With $\F_{q^m}=\F_q(s_1)$ and $[s_3]\neq [u_3]$, map $[(r_1,s_1,s_3,s_4,\dots,s_n)]$ to $[(s_1,s_1,s_3,s_4,\dots,s_n)]$ and $[u]$ remains unchanged by lemma \ref{0.6.10}. With $\F_{q^m}=\F_q(s_1)$ and $[s_3]\neq [u_3]$, map $[(s_1,s_1,s_3,s_4,\dots,s_n)]$ to $[(s_1,s_2,s_3,s_4,\dots,s_n)]$=$[s]$ and the orbit $[u]$ remains unchanged by lemma \ref{0.6.10}.\\
\textsf{Subcase 4.2}. If $[r_i]\neq[u_i]$ and $[s_i]\neq[u_i]$ for some $i\in\{2,3,\dots,n\}$.\\
Since $n\geq 3$ there exist $k\in\{2,3,\dots,n\}$ such that $k\neq i$. Without loss of generality (simplifying notations) we assume $i=2, k=3$.
 As $\F_{q^m}=\F_q(r_1)$ and $[r_2]\neq [u_2]$, so by applying the tame map as in lemma \ref{0.6.10} several times we map the orbit $[r]$ to\\ $[(r_1,r_2,s_1,s_4,s_5,\dots,s_n)]$ and the orbit $[u]$ remains unchanged (Note that here we send $r_3$ to $s_1$, because $s_1$ is generator of $\F_{q^m}$ and we need this to send $r_1$ to $s_1$.).
With $\F_{q^m}=\F_q(s_1)$ and $[r_2]\neq [u_2]$ we map the orbit $[(r_1,r_2,s_1,s_4,s_5,\dots,s_n)]$ to $[(s_1,r_2,s_1,s_4,s_5,\dots,s_n)]$ and the orbit $[u]$ remains unchanged by lemma \ref{0.6.10}.
With $\F_{q^m}=\F_q(s_1)$ and $[r_2]\neq [u_2]$ we map the orbit $[(s_1,r_2,s_1,s_4,s_5,\dots,s_n)]$ to  $[(s_1,r_2,s_3,s_4,s_5,\dots,s_n)]$ and the orbit $[u]$ remains unchanged by lemma \ref{0.6.10}.\\
\textsf{Subsubcase 4.2.1}\\
 If $[(s_1,\hat{r}_2,s_3,s_4,\dots,s_n)]\neq[u_1,\hat{u}_2,u_3,u_4,\dots,u_n]$. Fix $a=(s_1,\hat{r}_2,s_3,s_4,\dots,s_n)$. Define a tame map $P=(X_1,X_2+h_{a,s_2-r_2}(X_1,\hat{X}_2,X_3,\dots,X_n),X_3,X_4,\dots,X_n)$ where $h_{a,s-2-r_2}$ is as in lemma \ref{def1.4}. (Note that since $s_1$ generates $\F_{q^m}=\F_q(s_1)$, we can indeed apply lemma \ref{def1.4} as $a=(s_1,\dots)$.)
  Thus $P[(s_1,r_2,s_3,s_4,\dots,s_n)]=[(s_1,s_2,\dots,s_n)]=[s]$ and $P[u]=[u].$\\
\textsf{Subsubcase 4.2.2}\\
If $[(s_1,\hat{r}_2,s_3,s_4,\dots,s_n)]\neq[u_1,\hat{u}_2,u_3,u_4,\dots,u_n]$. In this case $u_1\in[s_1]$, so $\F_q(u_1)=\F_{q^m}$. Thus using $[r_2]\neq[u_2]$, send $[u]$ to $[(u_1,u_2,u_2,u_4,u_5,\dots,u_n)]$ and $[(s_1,r_2,s_3,s_4,\dots,s_n)]$ remains unchanged by lemma \ref{0.6.10}. Now using  $\F_q(s_1)=\F_{q^m}$ and $[r_2]\neq[u_2]$, send $[(s_1,r_2,s_3,s_4,\dots,s_n)]$ to $[(s_1,r_2,s_2,s_4,s_5,\dots,s_n)]$ and $[(u_1,u_2,u_2,u_4,u_5,\dots,u_n)]$ remains unchanged by lemma \ref{0.6.10}. With  $\F_q(s_1)=\F_{q^m}$ and $[s_2]\neq[u_2]$, send $[(s_1,r_2,s_2,s_4,s_5,\dots,s_n)]$ to $[(s_1,s_2,s_2,s_4,s_5,\dots,s_n)]$ and $[(u_1,u_2,u_2,u_4,u_5,\dots,u_n)]$ remains unchanged by lemma \ref{0.6.10}. With  $\F_q(s_1)=\F_{q^m}$ and $[s_2]\neq[u_2]$, send $[(s_1,s_2,s_2,s_4,s_5,\dots,s_n)]$ to $[(s_1,s_2,s_3,s_4,s_5,\dots,s_n)]=[s]$ and $[(u_1,u_2,u_2,u_4,u_5,\dots,u_n)]$ remains unchanged by lemma \ref{0.6.10}. With  $\F_q(u_1)=\F_{q^m}$ and $[s_2]\neq[u_2]$, send $[(u_1,u_2,u_2,u_4,u_5,\dots,u_n)]$ to $[(u_1,u_2,u_3,u_4,u_5,\dots,u_n)]=[u]$ and $[s]$ remains unchanged by lemma \ref{0.6.10}.
Thus in all cases we can find a tame map $F\in TA_n(\F_q)$ such that $F[r]=[s]$ and $F[u]=[u].$

    \end{proof}
    \begin{lemma}\label{lemma1.10}
    Let $r,u\in\X$ be in different orbits with $u=(u_1,\dots, u_n)$ and $r=(r_1,\dots, r_n)$, $\F_q(r_1)=\F_{q^m}$ and $r\approx u$. Then there exist $G\in TA_n(\F_q)$ such that $G(r)\not \approx G(u)$.
        \end{lemma}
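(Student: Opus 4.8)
The plan is to shift a single, carefully chosen coordinate of $r$ by a Galois-equivariant ``bump'' polynomial that simultaneously fixes $u$ pointwise, and to choose the shift so that the new coordinate of $r$ falls outside the orbit of the corresponding coordinate of $u$; that alone forces $G(r)\not\approx G(u)$.

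The key combinatorial input I would establish first is: there is an index $k\in\{2,\dots,n\}$ with
\[ [(r_1,\dots,\widehat{r_k},\dots,r_n)]\neq[(u_1,\dots,\widehat{u_k},\dots,u_n)] \]
as orbits of the diagonal $\Delta$-action on $\F_{q^m}^{n-1}$. Suppose not. Then for each $k\in\{2,\dots,n\}$ there is $\phi_k\in\Delta$ with $\phi_k(r_i)=u_i$ for all $i\neq k$; in particular $\phi_k(r_1)=u_1$, and since $\F_q(r_1)=\F_{q^m}$ the group $\Delta$ acts simply transitively on the conjugates of $r_1$, so $\phi_k$ is the unique element $\phi_0\in\Delta$ with $\phi_0(r_1)=u_1$, independent of $k$. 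Taking $k=2$ gives $\phi_0(r_i)=u_i$ for $i\in\{1,3,4,\dots,n\}$, and taking $k=3$ (here $n\ge 3$ is used) gives it for $i\in\{1,2,4,\dots,n\}$; combining, $\phi_0(r_i)=u_i$ for every $i$, i.e. $[r]=[u]$, contradicting the hypothesis.

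With such a $k$ fixed, put $a:=(r_1,\dots,\widehat{r_k},\dots,r_n)\in\F_{q^m}^{n-1}$. Since $r_1$ is among its entries, $\F_q(a)=\F_{q^m}$, so Lemma \ref{def1.4}(2) produces, for any $b\in\F_{q^m}$, a polynomial $f_{a,b}\in\F_q[Y_1,\dots,Y_{n-1}]$ vanishing off the orbit $[a]$ with $f_{a,b}(a)=b$. Because $\#[u_k]$ divides $m$ while $q^m>m$, I may choose $b\in\F_{q^m}$ with $r_k+b\notin[u_k]$, hence $[r_k+b]\neq[u_k]$. Set
\[ G:=(X_1,\dots,X_{k-1},\ X_k+f_{a,b}(X_1,\dots,\widehat{X_k},\dots,X_n),\ X_{k+1},\dots,X_n), \]
which is tame, being of exactly the form of the map $P$ in Lemma \ref{0.6.10} (an elementary Jonqui\`eres map conjugated by $\tau_k$). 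By the choice of $k$ we have $(u_1,\dots,\widehat{u_k},\dots,u_n)\notin[a]$, so $f_{a,b}$ vanishes there and $G(u)=u$, whereas $G(r)=(r_1,\dots,r_{k-1},r_k+b,r_{k+1},\dots,r_n)$. Comparing $k$-th coordinates, $[G(r)_k]=[r_k+b]\neq[u_k]=[G(u)_k]$, so $G(r)\not\approx G(u)$, as required.

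The only real content is the first step — locating the ``free'' deleted index $k\ge 2$ — everything downstream being a routine invocation of Lemma \ref{def1.4}; I expect that step to be the main obstacle, and it is precisely the point where the standing hypothesis $n\ge 3$ (rather than $n\ge 2$) is indispensable, since one needs two distinct deleted indices to recover all $n$ coordinates of the hypothetical Galois element $\phi_0$. (Note also that the hypothesis $r\approx u$ is not actually needed for the conclusion once such a $k$ exists, but it does no harm to keep it.)
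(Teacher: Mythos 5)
Your proof is correct, but it follows a genuinely different route from the paper's. The paper exploits the hypothesis $r\approx u$ head-on: since $[r_1]=[u_1]$, the coordinate $u_1$ also generates $\F_{q^m}$, so one can compose shears $X_k\mapsto X_k+f_k(X_1)$ (with $f_k$ supported on $[u_1]$) to normalize $u$ to $(u_1,0,\dots,0)$; then, because $G$ maps distinct orbits to distinct orbits and $[r_1]=[u_1]$, not all of the new coordinates $\tilde r_2,\dots,\tilde r_n$ of $G(r)$ can vanish, which already gives $G(r)\not\approx G(u)$. That argument is shorter, indirect (it never identifies which coordinate separates), moves both points, and incidentally works for $n\geq 2$. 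You instead keep $u$ fixed pointwise and move $r$ directly: your key combinatorial step — that for some deleted index $k\geq 2$ the $(n-1)$-tuples $[(r_1,\dots,\hat{r}_k,\dots,r_n)]$ and $[(u_1,\dots,\hat{u}_k,\dots,u_n)]$ lie in different orbits, proved via simple transitivity of $\Delta$ on the conjugates of the generator $r_1$ and the use of two distinct deleted indices (hence $n\geq 3$, which is the section's standing assumption) — is exactly the kind of bookkeeping the paper only deploys later, in Lemma \ref{1.14}; after that, the bump-function shear is the same device as in Lemma \ref{0.6.10}. What your version buys: it never uses $r\approx u$ (only $[r]\neq[u]$ and $\F_q(r_1)=\F_{q^m}$), it produces a $G$ with $G(u)=u$ exactly and an explicit coordinate witnessing $G(r)\not\approx G(u)$, and your choice of $b$ (possible since $\#[u_k]\leq m<q^m$) is sound. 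What the paper's version buys: brevity, no case analysis to locate $k$, and validity already for $n\geq 2$. Both are complete proofs of Lemma \ref{lemma1.10} and both plug equally well into Proposition \ref{1.11}.
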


\begin{proof}
Since $r_i\in[u_i]$ $\forall$ $1\leq i\leq n$, in particular $r_1\in[u_1]$. Thus $u_1$ is generator of $\F_{q^m}=\F_q(u_1)$. Define the tame map $G_k=(X_1,\dots,X_k+f_k(X_1),\dots,X_n)$ for $2\leq k\leq n$ where $f_k\in\F_q[X_k]$ is such that $f_k(u_1)=-u_k$ and $f_k([a_1])=0$ if $[a_1]\neq[u_1]$. Define $G=G_2G_3\dots G_n$. Thus $G(u_1,\dots,u_n)=(u_1,0,0,\dots,0)$, $G(r_1,r_2,\dots,r_n)=(r_1,\tilde{r}_2,\tilde{r}_3,\dots,\tilde{r}_n)$. Since $[r]\neq[u]$  thus $G[r]\neq G[u]$. This shows at least one of $[\tilde{r}_i]\neq[0]$ for $i\geq 2$ (as $[r_1]=[u_1]$ given). Hence $G(r)\not \approx G(u)$.
\end{proof}

\begin{proposition} \label{1.11}
Let $s,r,u\in \X$ s.t. $s=(s_1,\dots, s_n)$, $r=(r_1,\dots, r_n)$, and $u=(u_1,\dots,u_n)$  be in different orbits with $\F_{q^m}=\F_q(r_1)=\F_q(s_1)$. Then there exist $F\in TA_n(\F_q)$ s.t. $F([r])=[s]$ and $F([u])=[u]$.
\end{proposition}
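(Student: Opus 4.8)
The plan is to reduce Proposition \ref{1.11} to Lemma \ref{1.61} by getting rid of the two ``weak conjugacy'' hypotheses $r\not\approx u$ and $s\not\approx u$ that were assumed there. Lemma \ref{1.61} already handles the generic situation; what remains is to preprocess the triple $(r,s,u)$ by a tame map so that neither $r$ nor $s$ is weakly conjugate to $u$, while keeping the generating hypotheses $\F_{q^m}=\F_q(r_1)=\F_q(s_1)$ intact (or at least keeping \emph{some} coordinate a generator), and keeping $[u]$ fixed.

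First I would dispose of the case $r\approx u$. Here Lemma \ref{lemma1.10} applies directly (using $\F_q(r_1)=\F_{q^m}$): it produces $G\in \TA_n(\F_q)$ with $G(r)\not\approx G(u)$. I need to check that after applying $G$ the hypotheses of the later argument still hold — in particular that $G(r)$ and $G(s)$ still have a coordinate generating $\F_{q^m}$. Inspecting the proof of \ref{lemma1.10}, the map $G$ is of the form $(X_1,X_2+f_2(X_1),\dots,X_n+f_n(X_1))$, which fixes the first coordinate; so $\F_q((Gr)_1)=\F_q(r_1)=\F_{q^m}$ and likewise $\F_q((Gs)_1)=\F_{q^m}$, and $[u]$ is moved to $[(u_1,0,\dots,0)]$ whose first coordinate still generates. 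So I may replace $(r,s,u)$ by $(Gr,Gs,Gu)$ and thereby assume $r\not\approx u$ from the start; and symmetrically, if $s\approx u$, apply the analogous map built from $s_1$ (or run the same reduction with the roles of $r$ and $s$ swapped) to arrange $s\not\approx u$ as well. One subtlety: applying the second reduction must not re-introduce $r\approx u$; since the second map again only modifies coordinates $2,\dots,n$ as polynomials in $X_1$ and the first coordinates of $r,s,u$ all lie in distinct orbits or we can track them, this should be arrangeable, but this bookkeeping is exactly the place I expect friction.

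Once $r\not\approx u$ and $s\not\approx u$, together with $r,s,u$ in distinct orbits and $\F_{q^m}=\F_q(r_1)=\F_q(s_1)$, Lemma \ref{1.61} delivers $F\in \TA_n(\F_q)$ with $F[r]=[s]$ and $F[u]=[u]$. Composing $F$ with the preprocessing maps (and their inverses where needed to restore $[u]$) yields the desired tame map, completing the proof.

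The main obstacle is the harmless-looking coordination of the two weak-conjugacy reductions: I must ensure that the map chosen to break $r\approx u$ does not destroy the $\F_q(s_1)=\F_{q^m}$ hypothesis nor create $s\approx u$, and vice versa — essentially verifying that the ``fix-$[u]$, keep-a-generator'' moves of Lemmas \ref{0.6.10}, \ref{lemma1.10}, and \ref{1.61} can be chained without interference. Everything else is a direct appeal to the already-proved lemmas.
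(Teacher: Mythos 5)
Your proposal is correct and is essentially the paper's own proof: the paper likewise reduces to Lemma \ref{1.61} by applying the tail-killing map of Lemma \ref{lemma1.10} (built from $u_1$, which generates $\F_{q^m}$ whenever $r\approx u$ or $s\approx u$, and which fixes the first coordinates) and then conjugating. The bookkeeping you flag as potential friction is in fact vacuous: once $u$ is sent to $(u_1,0,\dots,0)$, any point in a different orbit whose image were weakly conjugate to this would have all later coordinates zero and first coordinate in $[u_1]$, hence lie in the \emph{same} orbit --- so a single such map simultaneously destroys both weak conjugacies and cannot recreate either, which is exactly the observation the paper uses in its mixed cases.
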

\begin{proof}
\underline{Case $r\not\approx u$ and $s \not\approx u$:} is done by lemma \ref{1.61}. \\
\underline{Case $r\approx u$ and $s\not \approx u$:}
In this case $r_i\in[u_i]$ $\forall$ $1\leq i\leq n$, in particular $r_1\in[u_1]$. Thus $u_1$ is generator of $\F_{q^m}=\F_q(u_1)$. Define the tame map $F_k=(X_1,\dots,X_k+f_k(X_1),\dots,X_n)$ for $2\leq k\leq n$ where $f_k\in\F_q[X_k]$ is such that $f_k(u_1)=-u_k$ and $f_k([a_1])=0$ if $[a_1]\neq[u_1]$. Define $F=F_2F_3\dots F_n$. Thus $F(u_1,\dots,u_n)=(u_1,0,0,\dots,0)$, $F(r_1,r_2,\dots,r_n)=(r_1,\tilde{r}_2,\tilde{r}_3,\dots,\tilde{r}_n)$ and $F(s_1,s_2,\dots,s_n)=(s_1,\tilde{s}_2,\tilde{s}_3,\dots,\tilde{s}_n)$. Since $[r]\neq[u]$ and $[s]\neq[u]$, thus $F[r]\neq F[u]$ and $F[s]\neq F[u]$. This shows at least one of $[\tilde{r}_i]\neq[0]$ for $i\geq 2$ (as given $[r_1]=[u_1]$). Similarly at least one of $[\tilde{s}_i]\neq[0]$ for $i\geq 2$ or $[s_1]\neq[u_1]$ . Thus we have reduced this case to the case $r\not_\approx u$ and $s \not_\approx u.$ \\
\underline{Case $r\not \approx u$ and $s \approx u$:} Similar to the previous case.\\
\underline{Case both $r\approx u$ and $s\approx u$:}
Using lemma \ref{lemma1.10} we find a map $G$ such that $G(r)\not \approx G(u)$.
By the previous case applied to the points $G(r), G(s), G(u)$  we find an $H$ such that $H([G(r)])=[G(s)]$ and $H[G(u)]=[G(u)]$. Now taking $F=G^{-1}HG$ we see that $F[r]=G^{-1}H[G(r)]=G^{-1}[G(s)]=[s]$, $F[u]=G^{-1}H[G(u)]=G^{-1}[G(u)]=[u]$.
\end{proof}

The above proposition proves the 2-transitivity of our group \texttt{G} under the assumption  that both $r_1$ and $s_1$ are generators of $\F_{q^m}$.
The remaining part of this section is to prove the 2-transitivity of our group \texttt{G} without any assumption on generators. We will do this using induction steps to the {\em type}:

\begin{definition} Let $s\in \F_{q^m}^n$. We define the type of  $s$ to be the sequence $\overrightarrow{m_s}=(m_1, \ldots, m_n)$ where $[\F_q(s_i):\F_q]=m_i$ and  $lcm(m_1,m_2,\dots,m_n)=m$.
\end{definition}
\begin{definition} The type of $s$ is said to be ordered type if the sequence $\overrightarrow{m_s}=(m_1, \ldots, m_n)$ is decreasing i.e., $m_1\geq m_2\geq\dots\geq m_n.$
\end{definition}
%Most of the time we will assume that $s$ is such that $m_1\geq m_2\geq \ldots\geq m_n$.
Our induction step will involve assuming that we have proven 2-transitivity for all ordered types of
a higher lexicographic order. The case that $m_1=m$ will then be solved by proposition \ref{1.11}.

 The following lemma \ref{1.14} will be helpful to prove the induction step, by making sure that some vector can be assumed to be in a different orbit than another vector without effecting the type of a vector.

\begin{lemma}\label{1.14}
Let $s, u\in \X$ s.t. $s=(s_1,\dots, s_n)$, and $u=(u_1,\dots,u_n)$  be in different orbits and suppose that $s$ has ordered type $\overrightarrow{m_s}=(m_1, \ldots, m_n).$  %is of the type $m_1\geq m_2\geq\dots\geq m_n$ with $m_i=[\F_q(s_i):\F_q]$.
If $[(s_1,\dots,\hat{s_i},\dots,s_n)]=[(u_1,\dots,\hat{u_i},\dots,u_n)]$, then for all $j\neq i$ with $i,j\in\{1,2,\dots,n\}$ there exist a polynomial map $f\in\F_q[(X_1,\dots,\hat{X}_j,\dots,X_n)]$ such that $[(s_1,\dots,\hat{s_i},\dots,s_j+f(s_1,\dots,\hat{s}_j,\dots,s_n),\dots,s_n)]\neq[(u_1,\dots,\hat{u}_i,\dots,u_j,\dots,u_n)]$ and $[\F_q(s_j+f(s_1,\dots,\hat{s}_j,\dots,s_n)):\F_q]=m_j.$
\end{lemma}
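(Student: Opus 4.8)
The plan is to construct $f$ explicitly by using the indicator polynomials $f_{a,b}$ from Lemma \ref{def1.4} together with a careful choice of the value to add to $s_j$, exactly as in the earlier lemmas (e.g. \ref{0.6.10}, \ref{lemma1.10}). First I would record the key structural fact forced by the hypothesis $[(s_1,\dots,\hat{s_i},\dots,s_n)]=[(u_1,\dots,\hat{u_i},\dots,u_n)]$: there is a single Galois element $\phi\in\Delta$ with $s_\ell=\phi(u_\ell)$ for every $\ell\neq i$ (one $\phi$ works for all coordinates simultaneously, since these are coordinates of vectors lying in the same orbit of the diagonal action). In particular $[s_\ell]=[u_\ell]$ for all $\ell\neq i$, so if additionally $i$-th coordinates were weakly conjugate we would be in the degenerate situation; but $[s]\neq[u]$ guarantees $[s_i]\neq[u_i]$ or, more precisely, that the full tuples differ, which is all we shall need.

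Next I would fix $a:=(s_1,\dots,\hat{s}_j,\dots,s_n)$ and look for a value $c\in\F_{q^m}$ such that $[\F_q(s_j+c):\F_q]=m_j$ and $[(s_1,\dots,s_j+c,\dots,s_n)]\neq[(u_1,\dots,u_j,\dots,u_n)]$ (with the $i$-th coordinates deleted in both tuples as in the statement — note the statement keeps $u$'s $i$-th slot written but what matters is the tuple equality/inequality it asserts). The point is that there are at most $m$ Galois conjugates of $u$ to avoid, hence at most $m$ ``bad'' values of $c$ coming from the orbit-collision condition; and the condition $[\F_q(s_j+c):\F_q]=m_j$ is an open/generic condition — it holds for all $c$ in the $\F_q$-vector space $\F_{q^{m_j}}$ except those lying in a proper $\F_q$-subfield, and since $s$ has \emph{ordered} type with $m_j$ dividing $m_1=\max$, one can in fact restrict attention to adding elements of $\F_{q^{m_j}}$ (using that $s_1$ or an appropriate earlier coordinate generates a field containing $\F_{q^{m_j}}$, so $f_{a,\cdot}$ can produce any value in $\F_{q^{m_j}}$). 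A counting argument then shows that as long as $\#\F_{q^{m_j}}$ exceeds $m$ plus the number of elements in proper subfields, a good $c$ exists; the small residual cases where $q^{m_j}$ is too small have to be handled by hand, but there $m_j$ is tiny and the orbit $[u]$ simply cannot block every one of the finitely many generators of $\F_{q^{m_j}}$ unless $[s]=[u]$, contradicting the hypothesis. Once $c$ is chosen, set $f:=f_{a,c}\in\F_q[X_1,\dots,\hat X_j,\dots,X_n]$ as in Lemma \ref{def1.4} (applicable because $a$ contains a coordinate generating $\F_{q^m}$, or at least $\F_{q^{m_j}}$, so indicator polynomials over $\F_q$ exist); then $s_j+f(a)=s_j+c$ has the required properties, completing the construction.

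The main obstacle I expect is not the algebra of the $f_{a,c}$'s — that is routine given Lemma \ref{def1.4} — but rather verifying that a value $c$ satisfying \emph{both} constraints simultaneously always exists, especially in the smallest cases (e.g. $m_j=1$, or $q$ and $m_j$ both very small), where the field $\F_{q^{m_j}}$ has few generators and there are few values to play with. The resolution is to observe that the hypothesis $[s]\neq[u]$ together with $[s_\ell]=[u_\ell]$ for $\ell\neq i$ (from the collision hypothesis) pins down where $s$ and $u$ can possibly agree, so the ``bad'' shifts $c$ form a genuinely proper subset of the admissible generators; a clean way to phrase this is to first apply Lemma \ref{0.6.10}-style moves to enlarge the discrepancy between $s$ and $u$ before choosing $f$, reducing to the generic situation. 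I would also double-check the edge case $j=i$ is excluded by hypothesis (it is, $j\neq i$), and that the ordered-type assumption is genuinely used precisely to ensure $m_j \mid m_1$ so that the generator producing the indicator polynomial can realize any shift in $\F_{q^{m_j}}$.
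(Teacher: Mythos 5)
Your toolkit (indicator polynomials from Lemma \ref{def1.4} applied to $a=(s_1,\dots,\hat s_j,\dots,s_n)$) is the right one, but your plan diverges from the paper exactly at the decisive point, and the divergence opens real gaps. The paper does not hunt for a generic shift $c\in\F_{q^{m_j}}$: it takes $f=f_{a,1}$, i.e.\ it shifts $s_j$ by $1\in\F_q$. That uses only part (1) of Lemma \ref{def1.4} (no generator hypothesis), preserves $[\F_q(s_j+1):\F_q]=m_j$ automatically, and excludes the orbit coincidence by a direct algebraic argument: writing $(s_1,\dots,\hat s_i,\dots,s_n)=\phi(u_1,\dots,\hat u_i,\dots,u_n)$, a coincidence after the shift would give $s_j=\phi(u_j)$ and $s_j+1=\phi(u_j)$, i.e.\ $1=0$. (The paper also first shows the $j$-deleted tuples lie in different orbits, which is what makes the induced tame map fix $[u]$ --- a point your plan does not address, although the bare statement of the lemma does not demand it.) Your route instead leans on two claims that fail. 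First, the achievable shifts are values $f(a)$ of $\F_q$-polynomials, hence elements of $\F_q[s_1,\dots,\hat s_j,\dots,s_n]$; ordered type gives only $m_j\le m_1$, not $m_j\mid m_1$ (take type $(4,4,3)$ and $j=3$), and for $j=1$ the generator $s_1$ is not among the remaining coordinates at all, so $\F_{q^{m_j}}$ need not be reachable and part (2) of Lemma \ref{def1.4} does not apply as you invoke it. Second, your counting argument is left open precisely in the tight cases, and the proposed rescue (``the orbit of $u$ cannot block every admissible shift unless $[s]=[u]$'') is not just unproved but false.

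Concretely, take $q=2$, $m=2$, $n=3$, $\omega$ a generator of $\F_4$, $s=(\omega,1,0)$ (ordered type $(2,1,1)$), $u=(\omega,0,0)$, $i=2$, $j=1$. The hypothesis holds since $[(s_1,s_3)]=[(u_1,u_3)]=[(\omega,0)]$, but every available shift $f(s_2,s_3)\in\F_2$ sends $(\omega,0)$ either to itself or to $(\omega^2,0)=\mathrm{Frob}(\omega,0)$, which lies in the same orbit; so no choice of $f$ separates the truncated tuples, and no genericity or counting over shifts can repair this. (The same example shows the subtlety your small-case worry was circling: the post-shift coincidence can be realized by a Galois element different from $\phi$, which is also the tacit assumption in the paper's own $1=0$ step; any correct treatment of this lemma, or of its use in Lemma \ref{1.1818}, has to confront that case rather than appeal to a generic choice of $c$.) As it stands, therefore, your proposal is not a proof: the field-containment step is wrong, and the existence of a good $c$ in the small cases is exactly what remains to be shown.
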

\begin{proof}
Since $[(s_1,\dots,\hat{s}_i,\dots,s_n)]=[(u_1,\dots,\hat{u}_i,\dots,u_n)]$ and $[s]\neq [u]$, therefore $(s_1,\dots,\hat{s}_i,\dots,s_n)=\phi(u_1,\dots,\hat{u}_i,\dots,u_n)$, $s_i=\eta(u_i)$ and  $s_i\neq \phi(u_i)$ for $\phi,\eta \in \Delta.$ Thus for any $j\neq i$ we have $[(s_1,\dots,\hat{s}_j,\dots,s_n)]\neq[(u_1,\dots,\hat{u}_j,\dots,u_n)]$. Fix $a=(s_1,\dots,\hat{s}_j,\dots,s_n)$. Define a tame map $F=(X_1,\dots,X_j+f_{a,1}(X_1,\dots,\hat{X}_j,\dots,X_n),\dots,X_n)$ where $f_{a,1}$ is as in lemma \ref{def1.4}.
%(Note that since $s_1$ generates $\F_{q^m}=\F_q(s_1)$, we can indeed apply lemma \ref{def1.4}.)
Thus $F[s]=[(s_1,\dots,s_{j-1},s_j+1,s_{j+1},\dots,s_n)]$ and $F[u]=[u]$. We claim that  $[(s_1,\dots,\hat{s}_i,\dots,s_{j-1},s_j+1,s_{j+1},\dots,s_n)]\neq[(u_1,\dots,\hat{u}_i,\dots,u_n)]$. For suppose that $[(s_1,\dots,\hat{s}_i,\dots,s_{j-1},s_j+1,s_{j+1},\dots,s_n)]=[(u_1,\dots,\hat{u}_i,\dots,u_n)]$. Combining it with $(s_1,\dots,\hat{s}_i,\dots,s_n)=\phi(u_1,\dots,\hat{u}_i,\dots,u_n)$ (in particular $s_k=\phi(u_k)$ for all $k\neq i$) we have $s_j=\phi(u_j)$  and $s_j+1=\phi(u_j)$, thus $1=0$, a contradiction.
%Since $s_k=\phi(u_k)$ for all $k\neq i$, thus $s_j+1=\phi(u_j)$ gives $1=0$, a contradiction.
Also $[\F_q(s_j+f(s_1,\dots,\hat{s_j},\dots,s_n)):\F_q]=[\F_q(s_j+1):\F_q]=m_j.$
\end{proof}

\begin{lemma} \label{1.13}
Let $r=(r_1,\dots,r_n)\in\X$ such that $m_i=[\F_{q}(r_i):\F_q]$ $\forall$ $1\leq i\leq n$ with  $m_1\geq m_2\geq\dots\geq m_n,$ with at least one strict inequality, then there exist $f(X_1,\dots, X_{n-1}) \in \F_q[X_1,\dots, X_{n-1}]$ such that $[\F_q(r_n+f(r_1,\dots, r_{n-1})):\F_q]>m_n$.
 \end{lemma}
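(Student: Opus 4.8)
We want to find $f(X_1,\dots,X_{n-1})\in\F_q[X_1,\dots,X_{n-1}]$ such that, after replacing $r_n$ by $r_n+f(r_1,\dots,r_{n-1})$, the degree of the resulting field over $\F_q$ strictly exceeds $m_n$. The guiding idea is that the full tuple $r=(r_1,\dots,r_n)$ generates $\F_q(r_1,\dots,r_n)=\F_{q^m}$ (this is part of the standing convention for $r\in\X$, i.e. $\mathrm{lcm}(m_1,\dots,m_n)=m$), and since there is a strict inequality among the $m_i$, we have $m_n<m$. In particular $r_n$ alone does not generate $\F_{q^m}$. So there is "room" to perturb $r_n$ into a more generic element of $\F_{q^m}$ — but the perturbation must be an $\F_q$-polynomial expression in $r_1,\dots,r_{n-1}$, and we must land in a strictly larger subfield than $\F_{q^d}$ where $d=m_n$.

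**Key steps.** First I would isolate a single index $i<n$ with $m_i>m_n$; such an $i$ exists because the sequence is non-increasing with at least one strict inequality, and if all strict inequalities occurred only among indices $\geq$ some position $>$... more carefully: if $m_1=\dots=m_n$ there is no strict inequality, so some $m_i>m_{i+1}$ for some $i$, hence $m_i>m_n$ since the sequence is non-increasing; pick that $i$. Now $\F_q(r_i)=\F_{q^{m_i}}$ properly contains or is incomparable to $\F_{q^d}$; in any case $\mathrm{lcm}(m_i,d)>d$. The second step is to use Lemma \ref{def1.4}: because $\F_q(r_i)=\F_{q^{m_i}}$ and $r_i$ is a generator of that field, and because $\F_q(r_i,r_n)$ is contained in $\F_{q^e}$ with $e=\mathrm{lcm}(m_i,m_n)>m_n$ — actually I want a generator of a field strictly bigger than $\F_{q^d}$ expressible as a polynomial in $r_1,\dots,r_{n-1}$. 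Take any generator $t$ of $\F_{q^e}$ where $e=\mathrm{lcm}(m_i,m_n)$; since $\F_{q^e}=\F_q(r_i,r_n)\subseteq\F_q(r_1,\dots,r_{n-1})$ only if $r_n\in\F_q(r_1,\dots,r_{n-1})$, which need not hold — so instead I must be more delicate and choose the target field to be $\F_q(r_i)=\F_{q^{m_i}}$ itself, which is a subfield of $\F_{q^m}$ generated by $r_i\in\F_q(r_1,\dots,r_{n-1})$, hence lies in $\F_q(r_1,\dots,r_{n-1})$. Then $\F_{q^{m_i}}=\F_q(r_i)=\F_q[r_i]\ni r_n+(\text{some polynomial in }r_i)$ is false in general too, since $r_n$ may not lie in $\F_{q^{m_i}}$.

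**The right construction.** The clean fix: set $L=\F_q(r_n,r_i)=\F_{q^{e}}$ with $e=\mathrm{lcm}(m_n,m_i)>m_n$. Since $r_i\in\F_q(r_1,\dots,r_{n-1})$, the field $\F_q(r_i)$ lies inside $\F_q(r_1,\dots,r_{n-1})$; but also $L=\F_q(r_i)(r_n)$, so $L$ need not lie in $\F_q(r_1,\dots,r_{n-1})$. However, we are allowed to keep $r_n$ — the new $n$th coordinate is $r_n+f(r_1,\dots,r_{n-1})$, which lies in $L$ as soon as $f(r_1,\dots,r_{n-1})\in\F_q(r_i)\subseteq L$. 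So the element $r_n+f(r_1,\dots,r_{n-1})$ automatically lies in $L=\F_{q^e}$, and the question is whether we can choose $f$ so that it generates all of $L$, i.e. does not lie in any proper subfield of $L$. By Lemma \ref{def1.4}(2) applied with the generator $r_i$ of $\F_q(r_i)$ we can realize, as $f_{r_i,b}(r_i)$, any prescribed value $b\in\F_q(r_i)=\F_{q^{m_i}}$; more simply, $\F_q[r_i]=\F_q(r_i)=\F_{q^{m_i}}$, so $\{f(r_1,\dots,r_{n-1}):f\in\F_q[X_1,\dots,X_{n-1}]\}$ contains all of $\F_{q^{m_i}}$. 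Thus the set of candidate new coordinates is $r_n+\F_{q^{m_i}}$, a coset of $\F_{q^{m_i}}$ inside $\F_{q^e}$. It remains to show this coset contains a generator of $\F_{q^e}$. Since $e=\mathrm{lcm}(m_i,m_n)$ and $\F_{q^e}=\F_{q^{m_i}}(r_n)$, the coset $r_n+\F_{q^{m_i}}$ has size $q^{m_i}$; the non-generators of $\F_{q^e}$ form the union of the proper maximal subfields $\F_{q^{e/\ell}}$ over primes $\ell\mid e$, and $|\F_{q^{e/\ell}}\cap(r_n+\F_{q^{m_i}})|\leq q^{\min(e/\ell,\,m_i)}$; a counting argument (the coset is strictly larger than the union of these intersections, because each intersection has at most $q^{\gcd(e/\ell,m_i)}\le q^{m_i/\ell}$ elements and $\sum_\ell q^{m_i/\ell}<q^{m_i}$ for $q\ge 2$, noting $m_i\ge 2$ since $m_i>m_n\ge 1$) shows a generator exists in the coset. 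Pick $f$ realizing that generator; then $[\F_q(r_n+f(r_1,\dots,r_{n-1})):\F_q]=e>m_n$.

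**Main obstacle.** The genuinely delicate point is the last counting step: verifying that the coset $r_n+\F_{q^{m_i}}$ of $\F_{q^{m_i}}$ inside $\F_{q^e}$ is not swallowed by the proper subfields of $\F_{q^e}$. One must handle the worst case $e=2m_n$, $m_i=m_n$ small (e.g. $q=2$, $m_n=1$), where the coset has only $q^{m_i}=2$ elements; there one checks directly that a degree-$2$ element exists in $r_n+\F_2$ because at most one of $r_n,r_n+1$ can lie in $\F_2$. In general the inequality $\sum_{\ell\mid e \text{ prime}} q^{\gcd(e/\ell,\,m_i)} < q^{m_i}$ should be established by noting $\gcd(e/\ell,m_i)\le m_i/\ell \le m_i/2$ when $\ell\mid m_i$, and $\gcd(e/\ell,m_i)\le m_i$ with the term actually absent or controllable when $\ell\nmid m_i$ (since then $\F_{q^{m_i}}\not\subseteq\F_{q^{e/\ell}}$, forcing the intersection to be a proper subfield of $\F_{q^{m_i}}$, hence of size $\le q^{m_i/\ell'}$ for some prime $\ell'$); the book-keeping here, though elementary, is where care is needed. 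Everything else — extracting the index $i$ with $m_i>m_n$, using $\F_q[r_i]=\F_q(r_i)$ to realize arbitrary elements of $\F_{q^{m_i}}$ as polynomials in $r_1,\dots,r_{n-1}$, and concluding — is routine.
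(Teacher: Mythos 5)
Your overall idea --- realize a whole coset of perturbed values $r_n+S$ for a suitable subfield $S\subseteq\F_q[r_1,\dots,r_{n-1}]$ and then count --- is the right one, and your observation that $\F_q[r_i]=\F_{q^{m_i}}$ with $m_i>m_n$ gives a coset $r_n+\F_{q^{m_i}}$ of size $q^{m_i}\ge q^{m_n+1}$ of achievable new coordinates is sound. But from there you aim for far more than the lemma asks: you try to produce a \emph{generator} of $\F_{q^e}$, $e=\mathrm{lcm}(m_i,m_n)$, and the resulting subfield-intersection count is precisely the step you leave open, and as sketched it is flawed. The dichotomy you propose on primes $\ell\mid e$ (``$\ell\mid m_i$'' versus ``$\ell\nmid m_i$'') is not the relevant one: what matters is whether $m_i\mid e/\ell$. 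If $m_i\mid e/\ell$, the intersection $(r_n+\F_{q^{m_i}})\cap\F_{q^{e/\ell}}$ is in fact \emph{empty} (otherwise $r_n\in\F_{q^{e/\ell}}$, hence $m_n\mid e/\ell$ and then $e=\mathrm{lcm}(m_i,m_n)\mid e/\ell$, absurd); if $m_i\nmid e/\ell$, it is empty or a coset of $\F_{q^{\gcd(m_i,e/\ell)}}$ with $\gcd(m_i,e/\ell)$ a proper divisor of $m_i$, hence of size at most $q^{m_i/2}$, and one must still verify $\omega(e)\,q^{m_i/2}<q^{m_i}$ including the small cases. Moreover your ``worst case $e=2m_n$, $m_i=m_n$'' cannot occur under your own choice $m_i>m_n$ (and the claim that at most one of $r_n,r_n+1$ lies in $\F_2$ fails when $r_n\in\F_2$), so the discussion of the obstacle does not close it. As written, the proof is incomplete at its crux.

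The gap disappears once you notice that you only need degree $>m_n$, not degree exactly $e$: the coset $r_n+\F_{q^{m_i}}$ has $q^{m_i}\ge q^{m_n+1}$ elements, whereas the elements of $\bar{\F}_q$ of degree at most $m_n$ over $\F_q$ number at most $q+q^2+\dots+q^{m_n}<q^{m_n+1}$, so some $f$ already works. This weaker count is essentially the paper's proof, except that the paper does not even select a single index $i$: it perturbs by the full ring $\F_q[r_1,\dots,r_{n-1}]=\F_{q^{w}}$ with $w=\mathrm{lcm}(m_1,\dots,m_{n-1})>m_n$, and simply compares $q^{w}$ with $1+q+\dots+q^{m_n}$; no analysis of the subfield lattice of $\F_{q^e}$ is needed.
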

\begin{proof}% We can assume that $m_1\geq m_2\geq\dots\geq m_n$ has at least one strict inequality, otherwise $m_1=m_2\dots=m_n=m$ since $lcm\{m_1,\dots, m_{n-1}\}=m$ and this case is done in \ref{1.6}.
 Let $\beta_f=r_n+f(r_1,\dots, r_{n-1})$. Let \[ N=\#\{\beta_f : f(X_1,\dots, X_{n-1})\in\F_q[X_1, \dots, X_{n-1}]\}\]
\[=\#\{f(r_1, \dots, r_{n-1}): f\in\F_q[X_1, \dots, X_{n-1}]\}\]
\[=\#\F_q[r_1, \dots, r_{n-1}]\]
\[=\#\F_{q^{lcm\{m_1,\dots, m_{n-1}\}}}.\]
Since $m_1\geq m_2\geq\dots\geq m_n,$ has at least one strict inequality therefore if $w=lcm\{m_1,\dots, m_{n-1}\}$ then $m_n<w$. Thus
\[\# \{a\in\F_{q^m}|[\F_q(a):\F_q]\leq m_n \}\]
 \[\leq 1+q+\dots+q^{m_n}\] \[<q^w.\]
% This shows that the field $\F_{q^w}$ has at least one more element than the sum of the elements of all subfields of $\F_{q^m_n}$ including the field $\F_{q^m_n}$.
 Thus there exist some $f\in\F_q[X_1, \dots, X_{n-1}]$ such that $[\F_q(\beta_f):\F_q]>m_n.$

\end{proof}
\begin{remark}
If $m_1\geq m_2\geq\dots\geq m_n,$ has no strict inequality, then $m_1=m_2=\dots=m_n=m=lcm(m_1,\dots,m_n).$ Then this is the case done by lemma \ref{1.11}.
\end{remark}
\begin{lemma}\label{123}
Let $s,r,u\in \X$ with $s=(s_1,\dots, s_n), r=(r_1,\dots, r_n)$ and $u=(u_1, \dots, u_n)$ be in different orbits, suppose that $r$ has ordered type $\overrightarrow{m_r}=(m_1, \ldots, m_n).$
 %and suppose that $r_1,\dots,r_n$ is of the type $m_1\geq m_2\geq\dots\geq m_n$ with $m_i=[\F_q(r_i):\F_q]$ for all $1\leq m_i\leq n$.
 Suppose $F_q(s_i)=F_{q^m}$ for some $i\in\{1,2,\dots,n\}$, then there exists $T\in\TA_n(\F_q)$ such that $T(s_1,\dots,s_n)=(s_i,\dots)$, and the ordered type of $r$ remains unchanged under the map $T$.
\end{lemma}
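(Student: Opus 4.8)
The plan is to reduce everything to a single de Jonqui\`eres move on the first coordinate, preceded, in one awkward case, by an auxiliary tame move. If $i=1$ there is nothing to prove: take $T$ to be the identity. So assume $i\ge 2$. Since $i\ge 2$ and $\F_q(s_i)=\F_{q^m}$, the compositum $\F_q(s_2,\dots,s_n)$ contains $\F_q(s_i)$ and hence equals $\F_{q^m}$, so Lemma \ref{def1.4}(2) applies with $\alpha=(s_2,\dots,s_n)$ and $b=s_i-s_1$: it yields $f\in\F_q[X_2,\dots,X_n]$ that vanishes outside the orbit $[(s_2,\dots,s_n)]$ and has $f(s_2,\dots,s_n)=s_i-s_1$. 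Put $\sigma:=(X_1+f(X_2,\dots,X_n),X_2,\dots,X_n)\in\TA_n(\F_q)$; then $\sigma(s)=(s_i,s_2,\dots,s_n)$, so $\sigma(s)_1=s_i$ as wanted. If in addition $[(r_2,\dots,r_n)]\neq[(s_2,\dots,s_n)]$, then $f$ vanishes at $(r_2,\dots,r_n)$, whence $\sigma(r)=r$ and the ordered type of $r$ is literally unchanged; so $T=\sigma$ works.

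The only remaining case is $[(r_2,\dots,r_n)]=[(s_2,\dots,s_n)]$, and this is the crux of the lemma. Write $(r_2,\dots,r_n)=\phi(s_2,\dots,s_n)$ for some $\phi\in\Delta$; since $r$ and $s$ lie in different orbits we must have $r_1\neq\phi(s_1)$. The plan is to destroy this coincidence of tails by a preliminary tame map $T_0$ that leaves $s$ untouched and does not change the type of $r$. Using $n\ge 3$, choose $j\in\{2,\dots,n\}\setminus\{i\}$. Apply Lemma \ref{1.14} with its triple $(s,u,i)$ instantiated as $(r,s,1)$ and with its "$j$" equal to our $j$: its hypotheses hold because $r$ has ordered type and $[(r_2,\dots,r_n)]=[(s_2,\dots,s_n)]$. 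This produces $h\in\F_q[X_1,\dots,\hat X_j,\dots,X_n]$ (which, as in the proof of Lemma \ref{1.14}, may be taken supported on the orbit $[(r_1,\dots,\hat r_j,\dots,r_n)]$) so that the tame map $T_0:=(X_1,\dots,X_j+h,\dots,X_n)\in\TA_n(\F_q)$ satisfies $[\F_q(T_0(r)_j):\F_q]=m_j$ and $[(T_0(r)_2,\dots,T_0(r)_n)]\neq[(s_2,\dots,s_n)]$. Since $j\neq i$ and $s_i$ generates $\F_{q^m}$, the orbit $[(s_1,\dots,\hat s_j,\dots,s_n)]$ is distinct from the orbit supporting $h$, so $h$ vanishes there and $T_0(s)=s$. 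Thus after $T_0$ we have: $s$ unchanged (so $\F_q(T_0(s)_i)=\F_q(s_i)=\F_{q^m}$), $T_0(r)$ still of ordered type $\overrightarrow{m_r}$ (only the $j$-th coordinate moved, and its degree is preserved), and the tails of $T_0(r)$ and $s$ now in distinct orbits.

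Finally, re-run the construction of the first paragraph with $s$ and the pair $(T_0(r),s)$ in place of $(r,s)$: the relevant tails are now in different orbits, so the map $\sigma$ it produces fixes $T_0(r)$ while still sending $s$ to a vector with first coordinate $s_i$. Setting $T:=\sigma\circ T_0\in\TA_n(\F_q)$ then gives $T(s)_1=s_i$ and $T(r)=T_0(r)$ of ordered type $\overrightarrow{m_r}$, which is the assertion (and $T(u)$ automatically stays in an orbit distinct from those of $T(r),T(s)$, as $T$ is a bijection). I expect the only genuine difficulty to be this last case, and within it the simultaneous requirement that the auxiliary map from Lemma \ref{1.14} fix $s$, separate the tails of $r$ and $s$, and preserve the type of $r$; the availability (thanks to $n\ge 3$) of a coordinate $j\notin\{1,i\}$ to act on is exactly what reconciles these three demands, while everything else is routine bookkeeping with Lemma \ref{def1.4}.
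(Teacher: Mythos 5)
Your proposal is correct. The easy case (when $[(r_2,\dots,r_n)]\neq[(s_2,\dots,s_n)]$) is exactly the paper's argument: a de Jonqui\`eres shift $X_1\mapsto X_1+f_{a,s_i-s_1}(X_2,\dots,X_n)$ supported on $[(s_2,\dots,s_n)]$, which sends $s$ to $(s_i,s_2,\dots,s_n)$ and fixes $r$ outright. Where you genuinely diverge is the coinciding-tails case. The paper disposes of it with a two-line observation: if $(r_2,\dots,r_n)=\phi(s_2,\dots,s_n)$ then $r_i\in[s_i]$, so $[\F_q(r_i):\F_q]=m$, and since the type of $r$ is ordered this forces $m_1=\dots=m_i=m$; hence the plain coordinate swap $\tau_i$ already satisfies the lemma, as it only permutes coordinates of $r$ of equal degree. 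You instead break the tail coincidence first: invoking Lemma \ref{1.14} (with its $(s,u,i)$ taken as $(r,s,1)$, and using from its proof that the correcting polynomial is supported on $[(r_1,\dots,\hat r_j,\dots,r_n)]$) you perturb the $j$-th coordinate of $r$ for an auxiliary $j\notin\{1,i\}$, check via the generator $s_i$ and $[r]\neq[s]$ that this auxiliary map fixes $s$, and then fall back on the easy case. This is valid -- your key verification, that $[(s_1,\dots,\hat s_j,\dots,s_n)]\neq[(r_1,\dots,\hat r_j,\dots,r_n)]$ because any connecting $\psi$ would agree with $\phi$ on the generator $s_i$ and then force $r=\phi(s)$, is exactly right, and the type of $r$ is preserved since only the $j$-th coordinate changes and its degree stays $m_j$ -- but it costs you the machinery of Lemma \ref{1.14} (indeed a detail of its proof, not just its statement) and the hypothesis $n\geq 3$ to supply $j$, neither of which the paper's argument needs for this lemma. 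What your route buys is uniformity: both cases end in the same de Jonqui\`eres move, and you never have to analyze how a swap interacts with the ordered type; what the paper's route buys is brevity and independence from $n\geq 3$ and Lemma \ref{1.14}.
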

\begin{proof}
If $i=1$, then $T=(X_1,X_2,\dots,X_n)$ identity map. So we can suppose $i\in\{2,3,\dots,n\}.$
We will give the prove of this lemma in two cases.\\
\textbf{Case 1}. If $[(r_2,\dots,r_n)]=[(s_2,\dots,s_n)]$.\\
  This implies that $s_i\in [r_i]$ which means that $\F_q(r_i)=\F_{q^m}$. Thus $m_i=[\F_q(r_i):\F_q]=m$ and so the orderd type of $r$ become $m_1=m_2\dots=m_i=m\geq m_{i+1}\geq\dots\geq m_n$. Taking the map $T=(X_i,X_2,\dots,X_{i-1},X_1,X_{i+1},\dots,X_n)$ will not change the ordered type of $r$ and $T(s_1,\dots,s_n)=(s_i,s_2,\dots,s_{i-1},s_1,s_{i+1},\dots,s_n).$\\
  \textbf{Case 2}. If $[(r_2,\dots,r_n)]\neq[(s_2,\dots,s_n)]$.\\
   Fix $a=(s_2,\dots,s_n)$. Define $T=(X_1+f_{a,s_i-s_1}(X_2,\dots,X_n),X_2,\dots,X_n)$ where $f_{a,s_i-s_1}$ is as in lemma \ref{def1.4} (Notice that since $s_i$ generates $\F_q^m$, we can indeed apply this lemma.)
     Hence $T(s)=(s_i,s_2,\dots,s_{i-1},s_1,s_{i+1},\dots,s_n)$ and $T(r)=r,$ which proves the lemma.

\end{proof}
\begin{lemma}\label{1.1818}
Let $s,r,u\in \X$ with $s=(s_1,\dots, s_n), r=(r_1,\dots, r_n)$ and $u=(u_1, \dots, u_n)$ be in different orbits with $\F_q(s_1)=\F_{q^m}$, then there exists $F\in\TA_n(\F_q)$ such that $F([r])=[s]$ and $F([u])=[u]$.
\end{lemma}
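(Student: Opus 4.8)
The plan is to reduce Lemma~\ref{1.1818} to Proposition~\ref{1.11} by transforming the ``other point'' $u$ so that its first coordinate also becomes a generator of $\F_{q^m}$, while keeping $s_1$ (or a replacement first coordinate of $s$ that still generates $\F_{q^m}$) a generator and keeping all three points in distinct orbits. Since $\F_q(s_1)=\F_{q^m}$ is given, the only missing hypothesis of Proposition~\ref{1.11} is $\F_q(u_1)=\F_{q^m}$; everything hinges on producing a tame map that upgrades the type of $u_1$ without disturbing $s$ too badly.

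First I would dispose of the trivial case $\F_q(u_1)=\F_{q^m}$, where Proposition~\ref{1.11} applies directly (after possibly using Lemma~\ref{1.14} or Lemma~\ref{1.61}-style bookkeeping to ensure the weak-conjugacy side conditions, but in fact Proposition~\ref{1.11} already handles all weak-conjugacy cases internally, so no extra work is needed). So assume $[\F_q(u_1):\F_q]<m$. The key move: since some coordinate of $u$ must generate $\F_{q^m}$ (because $u\in\X$ means $u$ has orbit size exactly $m$, so $\mathrm{lcm}$ of the degrees of its coordinates is $m$; actually I need a coordinate $u_k$ with $\F_q(u_k)=\F_{q^m}$ — this need not hold, so instead I use Lemma~\ref{1.13}): by Lemma~\ref{1.13} applied after permuting so that $u_1$ has the smallest degree, there is $f\in\F_q[X_2,\dots,X_n]$ with $[\F_q(u_1+f(u_2,\dots,u_n)):\F_q]$ strictly larger than $m_1$. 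Iterating, I can find a polynomial change $X_1\mapsto X_1+g(X_2,\dots,X_n)$ realized by a strictly Jonqui\`eres tame map $G$ such that $G(u)$ has first coordinate generating $\F_{q^m}$. The subtlety is that $G$ also changes $s_1$: I must ensure $G(s)$ still has a coordinate generating $\F_{q^m}$, which is automatic because $s$ itself does (namely $s_1$, if $G$ only touched coordinate $1$ via a translation by $g(s_2,\dots,s_n)$ — wait, that changes $s_1$). To control this, I would instead choose the polynomial $f$ from Lemma~\ref{1.13} of the form $f_{a,\cdot}$ vanishing off the $\Delta$-orbit of $(u_2,\dots,u_n)$, so that $G$ changes $u_1$ but leaves $s_1$ (and hence its generating property) intact whenever $(s_2,\dots,s_n)\not\approx(u_2,\dots,u_n)$; the weakly-conjugate subcase is handled by first applying Lemma~\ref{1.14} to $s$ and $u$ to separate an appropriate tail, then repeating.

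Concretely the steps are: (i) if $\F_q(u_1)=\F_{q^m}$, invoke Proposition~\ref{1.11} and stop; (ii) otherwise, use Lemma~\ref{123} to move the generator of $\F_{q^m}$ among $s$'s coordinates into position $1$ if it is not already there (this is free and preserves $r$, though here we phrase everything in terms of $s$ rather than $r$), and symmetrically arrange, via a permutation commuting with the action, that we may analyze $u$; (iii) apply Lemma~\ref{1.13} (in its iterated form, with the vanishing-off-orbit trick via Lemma~\ref{def1.4}) to build $G\in\TA_n(\F_q)$ with $\F_q((G(u))_1)=\F_{q^m}$ and $\F_q((G(s))_1)=\F_{q^m}$ still, all three images still in distinct orbits; (iv) apply Proposition~\ref{1.11} to $G(s),G(r),G(u)$ to get $H$ with $H[G(r)]=[G(s)]$, $H[G(u)]=[G(u)]$; (v) set $F=G^{-1}HG$ and conclude $F[r]=[s]$, $F[u]=[u]$ exactly as in the last case of the proof of Proposition~\ref{1.11}.

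The main obstacle I anticipate is step (iii): ensuring that the single tame map $G$ simultaneously (a) raises the degree of $u_1$ to $m$, (b) does not lower the degree of $s_1$ below $m$, and (c) keeps $[G(r)],[G(s)],[G(u)]$ pairwise distinct and avoids creating unwanted weak-conjugacy. The counting in Lemma~\ref{1.13} only guarantees \emph{existence} of a suitable $f$, not that one $f$ works for both the degree-raising of $u_1$ and the degree-preservation of $s_1$; I expect the clean fix is to choose $f=f_{a,b}$ supported on the $\Delta$-orbit of the relevant tail of $u$ (so $s_1$ is untouched when the tails are not weakly conjugate) and to dispatch the remaining weakly-conjugate configuration by a preliminary application of Lemma~\ref{1.14} followed by re-running the argument — essentially the same reduction pattern already used repeatedly in Lemma~\ref{1.61} and Proposition~\ref{1.11}. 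Beyond that, the argument is routine bookkeeping of the kind established earlier in this section.
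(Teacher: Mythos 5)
There is a genuine gap, and it occurs at the very first step: you have misread the hypotheses of Proposition \ref{1.11}. That proposition assumes $\F_{q^m}=\F_q(r_1)=\F_q(s_1)$ and places no condition at all on $u$ beyond $u\in\X$ and $[u]$ being distinct from $[r]$ and $[s]$. Since Lemma \ref{1.1818} already grants $\F_q(s_1)=\F_{q^m}$, the hypothesis that is missing is $\F_q(r_1)=\F_{q^m}$ --- a condition on the point $r$ that has to be moved onto $[s]$, not on the point $u$ that is to be kept fixed. Your whole plan (steps (i)--(v)) is devoted to upgrading $u_1$ to a generator of $\F_{q^m}$ while protecting $s_1$; this accomplishes nothing, because after step (iii) the point $G(r)$ may still have small first coordinate (indeed no coordinate of $r$ need generate $\F_{q^m}$), so Proposition \ref{1.11} cannot be applied to $G(s),G(r),G(u)$ in step (iv). Step (i) fails for the same reason: knowing $\F_q(u_1)=\F_{q^m}$ does not put you in the situation of Proposition \ref{1.11}.

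Even if one redirects your construction at $r$ instead of $u$ --- which is what the paper actually does --- the phrase ``iterating Lemma \ref{1.13}'' conceals the real content. A single application of Lemma \ref{1.13} only raises the degree of the lowest-degree coordinate strictly, not necessarily to $m$, so the paper runs an induction on the lexicographic ordered type of $r$: a shear $G=(X_1,\dots,X_{n-1},X_n+f(X_1,\dots,X_{n-1}))$ from Lemma \ref{1.13} strictly raises the type of $r$; a correction $H_f$ supported on the Galois orbit of $(u_1,\dots,u_{n-1})$, built from Lemma \ref{def1.4}, makes the composite fix $[u]$; Lemma \ref{1.14} disposes of the configuration $[(r_1,\dots,r_{n-1})]=[(u_1,\dots,u_{n-1})]$ in which that correction would also disturb $r$; a coordinate swap restores the ordered type, and Lemma \ref{123} keeps a generator in the first slot of $s$ without altering $r$'s type; the base case, where the top entry of $r$'s type equals $m$, is exactly Proposition \ref{1.11}. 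Your ``vanishing-off-orbit'' idea and your appeal to Lemma \ref{1.14} are the right tools for shielding the untouched point, but you deploy them to shield $s$ from a modification of $u$, whereas the proof needs them to shield $u$ from a modification of $r$. As written, the proposal does not prove the lemma.
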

\begin{proof}
We will prove this lemma using mathematical induction on the ordered type of $r$. Our induction step will involve assuming that we have proven the lemma (2-transitivity) for all ordered types $\overrightarrow{\acute{m}_r}=(\acute{m}_1, \acute{m}_2, \ldots, \acute{m}_n)$ of a higher lexicographic order.\\
We may reorder and rename $s_1,s_2,\dots,s_n$ and $r_1,r_2,\dots,r_n$ such that $r_1,r_2,\dots,r_n$ is of ordered type $\overrightarrow{m_r}=(m_1,m_2,\dots,m_n)$ and $s_i$ is generator of field $\F_{q^m}$ (i.e. $s_1$ is moved to the i-th position).
Then by lemma \ref{123} there exist a tame map $T\in\TA_n(\F_q)$ such that $T(s_1,s_2,\dots,s_n)=(s_i,\dots)$ and the ordered type of $r_1,r_2,\dots, r_n$ remains unchanged under $T$.
 Thus the case $m_1=m$ is done by proposition \ref{1.11}. This is the initial induction case.\\
 We will now formulate the second step of induction involving induction hypothesis.
%
%  Assume that we have proven the lemma for elements  $\acute{r}=(\acute{r_1}, \acute{r_2}, \dots, \acute{r_n})\in\X$ of the type $\overrightarrow{\acute{m}}=(\acute{m_1}, \acute{m_2}, \dots, \acute{m_n}) \in \{1, 2, \dots, m\}^n$ with $\overrightarrow{\acute{m}}\ge_{Lex}\overrightarrow{m}$, $\acute{m_i}:=[\F_q(\acute{r_i}) : \F_q]$ , $\acute{m_1}\geq \acute{m_2}\geq\dots\geq \acute{m_n}$, $m=lcm(\acute{m_1}, \acute{m_2}, \dots, \acute{m_n})$ and $\F_{q^m}=\F_q(\acute{r_1}, \acute{r_2}, \dots, \acute{r_n})$.\\
 Define the tame map $G:=(X_1, X_2, \dots, X_n+f(X_1, X_2, \dots, X_{n-1}))$, where $f\in\F_q(X_1, X_2,\dots, X_{n-1})$ is s.t. $m_n+k=[\F_q(\beta):\F_q]$ by lemma \ref{1.13} for some $k\geq1$, where $r_n+f(r_1, r_2, \dots, r_{n-1})=\beta$.
%As a first case suppose that the elements $r$ and $u$ are such that $[(r_1,\dots,r_{n-1})]\neq [(u_1,\dots,u_{n-1})]$.
Fix $a=(u_1,\dots,u_{n-1})$. Define a tame map $H_f:=(X_1,X_2,\dots,X_n+h_{a,-f(a)}(X_1,\dots, X_{n-1}))$ where $h_{a,-f(a)}$ is as in lemma \ref{def1.4}. Define $G_1=H_f\circ G.$\\
\textbf{Case 1} Suppose $[(r_1,\dots, r_{n-1})]\neq [(u_1,\dots, u_{n-1})].$ We compute $G_1([r])=G[r]  =[(r_1, r_2, \dots, r_{n-1}, \beta)]$ and $G_1([u])=H_f\circ G[(u_1, u_2, \dots, u_n)]=H_f[(u_1, u_2, \dots, u_{n-1}, u_n+f(u_1, u_2, \dots, u_{n-1}))]=[(u_1, u_2, \dots, u_{n-1},u_n+f(u_1, u_2, \dots, u_{n-1})-f(u_1, u_2, \dots, u_{n-1}) )]=[(u_1,u_2, \dots, u_{n})]$.  Rearrange $r_1, r_2, \dots, r_{n-1}, \beta$ by a swap map $G_2$ to get its ordered type $\overrightarrow{\acute{m}_r}=(\acute{m}_1,\acute{m}_2,\dots,\acute{m}_n),$ where  $\overrightarrow{\acute{m}_r}\ge_{Lex}\overrightarrow{m_r}$. Then by  the induction argument there exist  a tame map $G_3$ s.t. $G_3(G_2G_1[r])=[s]$ and $G_3G_2G_1[u]=[u]$, hence $F=G_3G_2G_1$.\\
 \textbf{Case 2}.\\
 Now suppose that the elements $r$ and $u$ are such that $[(r_1,\dots,r_{n-1})]=[(u_1,\dots,u_{n-1})]$.
 Then by lemma \ref{1.14} we can find a tame map $F_1$ mapping $[r]$ to $[(\tilde{r_1},\dots,\tilde{r}_n)]$ and $[u]$  to $[u]$ such that $[(\tilde{r_1},\dots,\tilde{r}_{n-1})]\neq[(u_1,\dots,u_{n-1})]$  and $[\F_q(\tilde{r_i}):\F_q]=m_i$ for all $1\leq i\leq n$ (i.e., $r$ and $(\tilde{r_1},\dots,\tilde{r}_{n})$ has same ordered type). Now by applying the case 1 of this lemma we can find a tame map $F_2$ such that $F_2F_1[r]=[s]$ and $F_2F_1[u]=[u],$ hence $F=F_2F_1$ is our required map.
\end{proof}

\begin{proposition} \label{1.725}
Let $s,r,u\in \X$ s.t. $s=(s_1,\dots, s_n)$, $r=(r_1,\dots, r_n)$, and $u=(u_1,\dots,u_n)$  be in different orbits then there exist $F\in TA_n(\F_q)$ s.t. $F([r])=[s]$ and $F([u])=[u]$.
\end{proposition}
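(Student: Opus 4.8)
The plan is to reduce the general case to the cases already handled by Propositions~\ref{1.11} and~\ref{1.1818} via an induction on the ordered type $\overrightarrow{m_r}$ of the vector $r$, with the lexicographic order on types as the induction parameter: we assume the statement proven for all ordered types strictly larger than $\overrightarrow{m_r}$ in the lexicographic order. Without loss of generality (permuting coordinates by suitable $\tau_i$, which is tame and only changes types by reordering) we may assume $r$ has ordered type $\overrightarrow{m_r}=(m_1,\dots,m_n)$ with $m_1\geq m_2\geq\dots\geq m_n$ and $m_1$ maximal possible among the coordinates. Since $\mathrm{lcm}(m_1,\dots,m_n)=m$, the base case of the induction is $m_1=m$, i.e. $\F_q(r_1)=\F_{q^m}$: in that situation we must produce $F$ with $F[r]=[s]$ and $F[u]=[u]$, but this is precisely what Proposition~\ref{1.11} gives us \emph{provided} $\F_q(s_1)=\F_{q^m}$ as well, and Lemma~\ref{1.1818} removes exactly that restriction on $s$ (it assumes $\F_q(s_1)=\F_{q^m}$ but places no condition on $r$). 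So the base case $m_1=m$ is dispatched by Lemma~\ref{1.1818}.

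For the induction step we assume $m_1<m$, so the type of $r$ has at least one strict inequality among $m_1\geq\dots\geq m_n$ (otherwise all $m_i$ are equal and then equal to $m$, forcing $m_1=m$). First I would handle the generic subcase $[(r_1,\dots,r_{n-1})]\neq[(u_1,\dots,u_{n-1})]$. Apply Lemma~\ref{1.13} to find $f\in\F_q[X_1,\dots,X_{n-1}]$ with $[\F_q(\beta):\F_q]>m_n$ where $\beta=r_n+f(r_1,\dots,r_{n-1})$, and set $G=(X_1,\dots,X_{n-1},X_n+f(X_1,\dots,X_{n-1}))$. To keep $[u]$ fixed, compose with the correcting tame map $H_f=(X_1,\dots,X_{n-1},X_n+h_{a,-f(a)}(X_1,\dots,X_{n-1}))$ from Lemma~\ref{def1.4} with $a=(u_1,\dots,u_{n-1})$, so that $G_1=H_f\circ G$ satisfies $G_1[r]=[(r_1,\dots,r_{n-1},\beta)]$ and $G_1[u]=[u]$ (here one uses $h_{a,-f(a)}$ vanishes outside $[u_1,\dots,u_{n-1}]$ and equals $-f(a)$ at $a$, exactly as computed in the proof of Lemma~\ref{1.1818}). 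Now the vector $(r_1,\dots,r_{n-1},\beta)$, after a coordinate permutation $G_2$ to put it in ordered type $\overrightarrow{\acute m_r}$, has type strictly lexicographically larger than $\overrightarrow{m_r}$ because we have strictly increased $m_n$ to something $>m_n$ while the larger entries can only move up when re-sorted. By the induction hypothesis there is a tame $G_3$ with $G_3(G_2G_1[r])=[s]$ and $G_3G_2G_1[u]=[u]$, so $F=G_3G_2G_1$ works.

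For the remaining subcase $[(r_1,\dots,r_{n-1})]=[(u_1,\dots,u_{n-1})]$, I would invoke Lemma~\ref{1.14} (with $i=n$) to find a tame map $F_1$ sending $[r]$ to $[(\tilde r_1,\dots,\tilde r_n)]$ and fixing $[u]$, such that $[(\tilde r_1,\dots,\tilde r_{n-1})]\neq[(u_1,\dots,u_{n-1})]$ while $[\F_q(\tilde r_i):\F_q]=m_i$ for every $i$ — so $\tilde r$ has the \emph{same} ordered type as $r$. Then the previous subcase applies to $\tilde r,s,u$ yielding $F_2$ with $F_2F_1[r]=[s]$, $F_2F_1[u]=[u]$, and $F=F_2F_1$ finishes the step. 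I expect the main obstacle to be bookkeeping: making sure that every auxiliary tame map we compose genuinely fixes $[u]$ (this is what the $h_{a,\bullet}$ corrections are for) and that the type of $r$ strictly increases in the lexicographic order at each step so the induction is well-founded, together with the fact that permuting coordinates to achieve ``ordered type'' must be done consistently for $r$ while tracking what it does to $s$ and $u$. All of this is routine given Lemmas~\ref{1.13},~\ref{1.14},~\ref{123},~\ref{1.1818} and Proposition~\ref{1.11}, but it requires care.
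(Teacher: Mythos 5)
Your argument is correct in substance but takes a much longer route than the paper. The paper's proof of Proposition \ref{1.725} is a two-line composition: pick an auxiliary point $v=(v_1,0,\dots,0)$ with $v_1$ a generator of $\F_{q^m}$, apply Lemma \ref{1.1818} twice to obtain $F_1,F_2\in\TA_n(\F_q)$ with $F_1([s])=[v]$, $F_2([r])=[v]$, both fixing $[u]$, and set $F=F_1^{-1}F_2$. Your proposal instead re-runs, at the level of Proposition \ref{1.725}, exactly the induction on ordered type that constitutes the paper's proof of Lemma \ref{1.1818} (Lemma \ref{1.13} together with the $H_f\circ G$ correction in the generic subcase, Lemma \ref{1.14} in the subcase $[(r_1,\dots,r_{n-1})]=[(u_1,\dots,u_{n-1})]$); since you may quote Lemma \ref{1.1818} as a black box, that work is redundant --- the only genuinely new content needed is a way to remove the generator hypothesis on $s$, and for that the intermediate-point trick is shorter. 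What your route does show, if written out, is that the induction step of Lemma \ref{1.1818} never uses $\F_q(s_1)=\F_{q^m}$, so the statement could have been strengthened directly; but it also inherits the same bookkeeping wrinkle as the paper's proof (the sorting map $G_2$ need not fix $[u]$, so one should treat the induction parameter as the sorted degree sequence without physically permuting coordinates, or conjugate appropriately), which you rightly flag as the delicate point.

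One step needs repair: your base case is misoriented. When the ordered type of $r$ has $m_1=m$ (so $\F_q(r_1)=\F_{q^m}$ after reordering) but $s$ is arbitrary, Lemma \ref{1.1818} does not apply as stated --- its generator hypothesis sits on the target $s_1$, not on $r$, exactly as your own parenthesis records, so the sentence ``the base case $m_1=m$ is dispatched by Lemma \ref{1.1818}'' does not follow as written. The fix is immediate: apply Lemma \ref{1.1818} with the roles of $r$ and $s$ exchanged to get $F'$ with $F'([s])=[r]$ and $F'([u])=[u]$, then take $F=(F')^{-1}$; this is the same inversion device used in Case 3 of the proof of Lemma \ref{1.61}, and it is in effect what the paper's own proof does globally via the auxiliary orbit $[v]$.
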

\begin{proof}
 Pick $v=(v_1,0,0,\dots,0)$ where $v_1$ is a generator of $\F_{q^m}$. Then by lemma \ref{1.1818} there exist $F_1,F_2\in\TA_n(\F_q)$ such that $F_1([s])=[v]$, $F_1([u])=[u]$ and $F_2([r])=[v]$, $F_2([u])=[u]$. Define $F=F_1^{-1}F_2$. Then $F([r])=[s]$ and $F([u])=[u]$.
\end{proof}

\begin{proof} (of the main theorem \ref{Main}.)
Proposition \ref{1.725} shows that  \texttt{G} acts 2-transitively on the set of orbits $\bar\X$. This implies the primitivity of the group \texttt{G} on the set of orbits $\bar\X.$
Lemma \ref{3-cycles} shows that  \texttt{G} contains 3-cycles.  Hence by Jordan's  theorem \ref{Jordan},  \texttt{G} contains the alternating group $\Alt(\bar{\X})$.\end{proof}

\section{The case when $m$ is prime integer}
\label{Section5a}

Theorem \ref{Main} shows that $\Alt(\bar{\X})\subseteq\texttt{G}$ for $n\geq3.$ In general it is difficult to describe the group $\texttt{G}$ exactly. But for some particular cases we are able to compute $\texttt{G}$ and show which of the two possibilities (alternating or symmetric) it is.
In this subsection we assume that $m$ is prime integer.  We will prove the following proposition:
\begin{proposition}\label{Main1}
Let $m,p$ be prime integers, $q=p^l:$ $l\geq1$ and $n\geq3,$ then $\texttt{G}=Sym(\overline{\X})$ if $q\equiv3,7\mod8$ with $m=2$ and $\texttt{G}=Alt(\overline{\X})$ for all other $q$ and $m$.
\end{proposition}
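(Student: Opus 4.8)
The plan is to pin down whether $\texttt{G}$ is all of $\Sym(\bar{\X})$ or just $\Alt(\bar{\X})$ by computing the sign (parity) of the permutations of $\bar{\X}$ induced by a generating set of $\TA_n(\F_q)$, using Theorem \ref{Main} which already gives $\Alt(\bar{\X})\subseteq\texttt{G}$. Since $\TA_n(\F_q)$ is generated by $\Aff_n(\F_q)$ together with the elementary Jonqui\`eres maps (or even just by elementary linear maps, permutations, one diagonal generator of $\F_q^*$, and the $\tau_{a,1}$-type maps), it suffices to check that each such generator acts as an even permutation on $\bar{\X}$ — except possibly in the flagged cases $q\equiv 3,7\bmod 8$, $m=2$, where I expect to exhibit an odd one. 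Throughout, I would use that $m$ is prime, so every $a\in\F_{q^m}$ either lies in $\F_q$ or generates $\F_{q^m}$; hence $\bar\X$ consists of the orbits $[\alpha]$ with $\alpha\in\F_{q^m}^n$ having at least one coordinate outside $\F_q$, each such orbit having size exactly $m$, and $\#\bar\X = (q^{mn}-q^n)/m$.

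First I would handle the generators coming from strictly Jonqui\`eres maps of the form $E = I_n + (0,\dots,0,g(X_1,\dots,X_{n-1}),0,\dots,0)$ (and conjugates by coordinate permutations): such a map is a product of transpositions on $\X$ lying over its action on $\bar\X$, and because $E$ acts $\F_q$-linearly in the moved coordinate fiberwise, its cycles on $\X$ all have $p$-power length; since $m$ is prime and (in the main case) $m\neq p$ or $p$ is odd, a short counting argument shows the induced permutation of $\bar\X$ is a product of an even number of cycles of each relevant length, hence even. The affine generators split into: coordinate permutations (which move whole blocks of orbits in parallel and are easily even when $n\ge 3$, since each transposition of coordinates pairs up orbits $[\alpha]\leftrightarrow[\alpha']$ in $\#\bar\X/(\text{something even})$-many $2$-cycles), elementary linear maps $X_i\mapsto X_i+cX_j$ (same fiberwise-linear argument as the Jonqui\`eres case), and the diagonal map realizing a generator of $\F_q^*$, whose sign I would compute directly from how it permutes $\F_{q^m}^\times$ coordinatewise. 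The delicate bookkeeping is exactly here: one must count, modulo $2$, the number of nontrivial cycles each generator induces on $\bar\X$, and this count depends on $q\bmod$ (small numbers) precisely when $m=2$ because then the diagonal scaling $x\mapsto cx$ on $\F_{q^2}$ can pair up quadratic elements in an odd number of $2$-cycles when $-1$ or $2$ is a non-square, i.e. when $q\equiv 3,7\bmod 8$.

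So the crux of the argument is the $m=2$ analysis: here $\bar\X$ is the set of $\Gal$-orbits of $\F_{q^2}^n\setminus\F_q^n$, each of size $2$, and I would compute the sign of the image in $\Sym(\bar\X)$ of the diagonal generator $D=(cX_1,X_2,\dots,X_n)$ with $c$ a generator of $\F_q^*$ (equivalently of a well-chosen elementary generator), reducing it via the wreath-product description (Corollary \ref{cor3.1} with $d=2$, so $\Z/2\Z\wr\Perm(r_2)$) to counting fixed points versus transpositions of the induced map on the set of $2$-element orbits; a Frobenius/square-class computation then shows this sign is $-1$ exactly when $q\equiv 3,7\bmod 8$ and $+1$ otherwise, while for $m$ an odd prime every generator is even (odd-length $p$-cycles are even, and the relevant orbit-counts are even because $2\mid q^a-q^b$ for the exponents that occur). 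Combining: in the excluded cases we have produced an odd permutation in $\texttt{G}$, so $\texttt{G}=\Sym(\bar\X)$; in all other cases every generator of $\TA_n(\F_q)$ maps into $\Alt(\bar\X)$, so $\texttt{G}\subseteq\Alt(\bar\X)$, and with Theorem \ref{Main} we get equality. The main obstacle I anticipate is making the cycle-parity count on $\bar\X$ genuinely rigorous and uniform — in particular controlling the number of orbits fixed (setwise) by each generator, since those are the orbits on which the generator could contribute a transposition to the sign — and I would isolate this into one clean counting lemma, stated for $m$ prime, that expresses $\mathrm{sgn}$ of a fiberwise-$\F_q$-linear generator on $\bar\X$ in terms of an explicit residue of $q$.
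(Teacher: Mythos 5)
Your overall strategy is the paper's strategy: start from Theorem \ref{Main}, compute the sign on $\bar{\X}$ of each member of a generating set of $\TA_n(\F_q)$ by counting fixed orbits (hence the number of $2$-cycles), and locate an odd generator exactly when $m=2$, $q\equiv 3,7\pmod 8$. The paper does this for three families: the diagonal map $F_1=(aX_1,X_2,\dots,X_n)$ (Lemma \ref{4.8}, reduced to $(-X_1,X_2,\dots,X_n)$ via $\operatorname{sign}(F_1)=\operatorname{sign}(F_1^{(q-1)/2})$), the variable swap $F_2=(X_2,X_1,X_3,\dots,X_n)$ (Lemma \ref{4.7}), and shears (Lemma \ref{4.9}); your identification of the diagonal generator as the odd one in the exceptional case matches Lemma \ref{4.8}.

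However, one concrete claim in your sketch is false, and it sits exactly in the part you call delicate: coordinate permutations are \emph{not} ``easily even'' on $\bar{\X}$ for $n\ge 3$. By Lemma \ref{4.7}, the swap of two variables induces an \emph{odd} permutation of $\bar{\X}$ precisely when $m=2$ and $q\equiv 3,7\pmod 8$ (the number of moved points is $q^{2n}-q^{2(n-1)}-2q^n+2q^{n-1}\equiv -4\pmod 8$ there, so the number of $2$-cycles on orbits is odd). So the parity of the swap needs the same fixed-orbit count as the diagonal map; the pairing argument you give cannot be correct as stated. This does not change the final statement --- in the exceptional case you want $\texttt{G}=\Sym(\bar{\X})$ anyway, and in all other cases the swap does come out even --- but your justification for evenness of the affine permutation generators is unsound and must be replaced by the explicit count. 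The same caution applies to your shear argument when $p=2$: cycles of length $p=2$ are odd, so evenness there again requires counting the $2$-cycles modulo $2$ rather than just invoking that the order is $p$ (this is also the thinnest point of the paper's Lemma \ref{4.9}). Finally, a small slip: $q\equiv 3,7\pmod 8$ is exactly the condition that $-1$ is a nonsquare in $\F_q$ (i.e.\ $q\equiv 3\pmod 4$); the quadratic character of $2$ plays no role.
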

We will postpone the proof of this proposition to the end of this subsection. We first prove some lemma's that we need.

\begin{lemma}\label{4.7}
 Let $m,p$ are prime integers, $q=p^l:$ $l\geq1$.
Let $F_2:=(X_2,X_1,X_3\dots,X_n)$ then $\pi_{q^m}(F_2)\in\texttt{G}$ is odd permutation for $q=3,7\mod8$ with $m=2$ and is even permutation for all other  $m$ and $q.$
\end{lemma}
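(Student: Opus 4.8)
The plan is to compute the sign of the permutation $\pi_{q^m}(F_2)$ induced on $\bar{\X}$, where $F_2$ is the transposition of the first two coordinates. Since $F_2$ is an involution, the permutation it induces on $\bar{\X}$ is a product of disjoint transpositions and fixed points, so its sign is $(-1)^t$ where $t$ is the number of $2$-cycles, i.e. the number of orbits $[u]\in\bar{\X}$ with $F_2[u]\neq[u]$, divided by two. So the whole computation reduces to counting, modulo $4$, the number of orbits $[u]$ of size $m$ in $\bar{\F}_q^n$ that are moved by swapping the first two coordinates, and it suffices to work modulo $4$ since we only need the parity of $t$.

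First I would set up the count on the level of $\X$ (points, not orbits): let $A$ be the number of $u=(u_1,\dots,u_n)\in\X$ with $[u_1]\neq[u_2]$ — equivalently $u_1$ and $u_2$ are not Galois-conjugate — wait, more precisely $F_2(u)=(u_2,u_1,u_3,\dots,u_n)$ and we need $[(u_2,u_1,u_3,\dots,u_n)]\neq[(u_1,u_2,u_3,\dots,u_n)]$, which (since $m$ is prime) fails exactly when there is $\phi\in\Delta$ with $\phi(u_1)=u_2,\phi(u_2)=u_1,\phi(u_i)=u_i$ for $i\geq3$. Because each moved orbit has size $m$ and the transposition pairs it with another orbit of size $m$, the number of moved orbits is $A/m$ and $t=A/(2m)$; so $\pi_{q^m}(F_2)$ is odd iff $A/(2m)$ is odd, i.e. iff $A\equiv 2m\pmod{4m}$. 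Now since $m$ is prime we have two cases: if $m$ is odd, then $A/(2m)$ is even automatically would need checking — actually the cleaner route is to observe $\bar{\X}$ has size $d_m/m$ where $d_m=\#\X$, and to count fixed orbits versus moved orbits directly. Let me instead count $N_{\mathrm{fix}}$, the number of $u\in\X$ fixed (as an orbit) by $F_2$, and use $t = (d_m - N_{\mathrm{fix}})/(2m)$.

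The heart of the matter is the case $m=2$. Here $\Delta=\{1,\phi\}$ with $\phi$ the Frobenius of order $2$ over $\F_q$, and $\X$ consists of tuples $u$ with $\phi(u)\neq u$, i.e. at least one coordinate lies in $\F_{q^2}\setminus\F_q$. An orbit $[u]$ is moved by $F_2$ exactly when $(u_2,u_1,u_3,\dots,u_n)$ is neither $u$ nor $\phi(u)$; it is fixed when $u_1=u_2$, or when $u_2=\phi(u_1)$ together with $u_i=\phi(u_i)$ for $i\geq3$ — but the latter forces $u_3,\dots,u_n\in\F_q$, and then $u_1\in\F_{q^2}\setminus\F_q$ (to be in $\X$), $u_2=\phi(u_1)=u_1^q$. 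So I would count, over all choices of $(u_3,\dots,u_n)$, the number of fixed configurations of $(u_1,u_2)$, subtract from $q^{2n}-q^n$ (the size of $\X$ for $m=2$), divide by $4$, and read off the parity. The count of pairs $(u_1,u_2)\in\F_{q^2}^2$ with $u_1=u_2$ or $u_2=u_1^q$ is $q^2 + q^2 - q = 2q^2-q$ (overlap $u_1\in\F_q$); this must be combined with the constraint that the full tuple lies in $\X$ and with whether $(u_3,\dots,u_n)\in\F_q^{n-2}$ or not. The arithmetic then produces a quantity whose residue mod $4$ depends on $q^2 \bmod 8$ (since $q$ is odd here, as $q\equiv 3,7\bmod 8$ and more generally for $m=2$ the interesting subcase is odd $q$), and $q^2\equiv 1\pmod 8$ always for odd $q$, so the $\bmod 8$ dependence must come from a term like $q(q-1)/2$ or $\binom{q}{2}$ which is $\equiv q(q-1)/2$; I expect the final parity to hinge on whether $(q-1)(q+1)/8$ or a similar binomial is even or odd, matching $q\equiv 3,7\pmod 8$. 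For $m$ an odd prime, I expect every orbit moved by $F_2$ to come in pairs whose count is automatically divisible by $4$ (or there are simply no fixed-orbit subtleties and $A/(2m)$ comes out even), giving an even permutation; the argument there is that $m$ odd means $m\nmid 2$, so the contribution splits cleanly. The main obstacle will be the careful mod-$4$ bookkeeping in the $m=2$ case — correctly handling the overlap between the $u_1=u_2$ locus and the $u_2=u_1^q$ locus, and the interaction with the $\X$-membership condition on the remaining coordinates — so that the final residue genuinely distinguishes $q\equiv 3,7\pmod 8$ from $q\equiv 1,5\pmod 8$; everything else is routine.
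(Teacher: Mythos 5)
Your framework is exactly the paper's (count the orbits fixed by the swap, subtract from the total to get the moved orbits, halve to get the number of $2$-cycles, and read off the parity), and your identification of the fixed loci is correct: $u_1=u_2$ (minus the all-$\F_q$ tuples), and, only when $m=2$, the extra family $u_2=u_1^q$, $u_1\notin\F_q$, $u_3,\dots,u_n\in\F_q$. But the proposal stops precisely where the proof has to happen: both decisive arithmetic steps are replaced by ``I expect''. (a) For $m$ an odd prime you give no argument at all that the number of transpositions is even. Here the moved-point count is $Q=(q^{mn}-q^{n})-(q^{m(n-1)}-q^{n-1})$ (for $\phi\neq\mathrm{id}$ the conditions $\phi(u_1)=u_2$, $\phi(u_2)=u_1$, $\phi(u_i)=u_i$ force $\phi^2$, hence all of $\Delta$, to fix $u$, so $u\in\F_q^n\notin\X$ and there are no extra fixed points), and what must be shown is $4m\mid Q$: divisibility by $m$ comes from Fermat's little theorem $q^m\equiv q\pmod m$, and divisibility by $4$ needs a separate check splitting on $q$ even/odd. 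Nothing about this ``splits cleanly'' for free; ``$m$ odd means $m\nmid 2$'' is not an argument, and without the $m\mid Q$ and $4\mid Q$ verifications the evenness claim is unproved.

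(b) For $m=2$ you set up the right count but never execute it. The moved-point count is $Q_1=(q^{2n}-q^{n})-(q^{2(n-1)}-q^{n-1})-(q^2-q)q^{n-2}=q^{2n}-q^{2(n-1)}-2q^{n}+2q^{n-1}$, and the whole lemma hinges on showing that $Q_1/4$ is odd exactly when $q\equiv3,7\pmod 8$ (and even when $q\equiv1,5\pmod 8$ or $p=2$). This is a short but essential computation, e.g. $Q_1/4=q^{2(n-1)}\tfrac{q^2-1}{4}-q^{n-1}\tfrac{q-1}{2}$, where for odd $q$ one has $8\mid q^2-1$, so the parity of $Q_1/4$ equals the parity of $\tfrac{q-1}{2}$, i.e.\ odd iff $q\equiv 3\pmod 4$, which is the stated condition; the case $p=2$ must be checked separately (there $8\mid Q_1$ trivially since $n\geq3$). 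Your guess that the answer ``hinges on $(q-1)(q+1)/8$ or a similar binomial'' points in the right direction but is not the computation, and your raw pair count $2q^2-q$ is not the quantity that enters (the $u_2=u_1^q$ branch carries the additional constraint $u_3,\dots,u_n\in\F_q$, which you mention but never fold into a final number). As it stands the proposal is a correct plan with the two key verifications missing, so it does not yet establish the lemma.
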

\begin{proof}
To see the sign of permutation $\pi_{q^m}(F_2)$ we need to count 2-cycles in the decomposition of $\pi_{q^m}(F_2)$ in transpositions. For this, first we will see how many orbits $F_2$ fixes and then subtract this number from total number of orbits. In this way we will get the total number of orbits moved by $F_2$. Dividing this number by $2$ gives us the number of 2-cycles in $\pi_{q^m}(F_2).$\\
To count the number of orbits fixed by $F_2$, consider $[r]=[F_2(r)]$ for any $r:=(r_1,r_2,\dots,r_n)\in\X$. Then $r=\phi(F_2(r))$ and in particular $r_1=\phi(r_2)$, $r_2=\phi(r_1)$,$r_3=\phi(r_3)$,\dots,$r_n=\phi(r_n)$ for all $\phi\in \Delta.$ For $\phi=id$, we have $r_1=r_2$ and $r_3,r_4,\dots,r_n\in\X$ arbitrary. These are $q^{m(n-1)}-q^{n-1}$ points. For $\phi\neq id$, we have $r_1=\phi^2(r_1)$, $r_2=\phi^2(r_2)$ , $r_3=\phi(r_3),\dots,r_n=\phi(r_n)$ and so $r_1,r_2,r_3,\dots,r_n\in\F_q$ except for $m=2$. When $m=2$ we have $r_1=\phi(r_2)$ arbitrary and so $r_1\in\F_{q^2}\setminus\F_q$ and $r_3,r_4,\dots,r_n\in\F_q.$ Thus the case $\phi\not=id$ gives no point for both $m\neq2$ and $(q^2-q)q^{n-2}$  points satisfy $[r]=[F_2(r)]$ for $m=2.$\\
When counting the number of points which satisfy $[r]=[F_2(r)]$, we get different results for the cases $m\not=2$ and $m=2$, so we distinguish these cases.\\
\textbf{Case 1} Let $m\neq2.$
  In this case the number of points moved by $F_2=(X_2,X_1,X_3,\dots,X_n)$ are $\#\X-(q^{m(n-1)}-q^{n-1})=(q^{mn}-q^n)-(q^{m(n-1)}-q^{n-1})$.
  %As $F_2(r)\neq\phi(r)$ for any $\phi\neq id$, so $F_2(r)\notin[r]$, thus $F_2$ moves every orbit of size $m$ to some different orbit of size $m$ instead of itself (or just fix the whole orbit for the case $F_2(r)=r$).
 %Note that as $F^2=id$ and m is odd, so $F$ moves every orbit of size $m$ to some different orbit of size $m$ instead of itself.
Thus the number of orbits moved by $F_2$ are $\frac{(q^{mn}-q^n)-(q^{m(n-1)}-q^{n-1})}{m}$. So the number of 2-cycles in this permutation are $\frac{(q^{mn}-q^n)-(q^{m(n-1)}-q^{n-1})}{2m}$. Let $Q={(q^{mn}-q^n)-(q^{m(n-1)}-q^{n-1})}.$ Since $q^m\equiv q\mod m,$ thus $Q\equiv0\mod m.$ Also it is trivial to check $Q\equiv0\mod4,$ hence $Q\equiv0\mod4m.$
 % It is easy to see that $Q\equiv0\mod 4m$ in all dimensions except for $q=2$. For $q=2$, $Q\equiv0\mod 4m$ when $n\geq3.$
Thus $\pi_{q^m}(F_2)$ is even.\\
\textbf{Case 2} Let $m=2.$ So the number of points moved by $F_2=(X_2,X_1,\dots,X_n)$ are $\#\X-(q^{2(n-1)}-q^{n-1})-(q^2-q)q^{n-2}=(q^{mn}-q^n)-(q^{m(n-1)}-q^{n-1})-q^n+q^{n-1}=q^{2n}-q^{2(n-1)}-2q^n+2q^{n-1}$.
 % Note that as $F^2=id$ and m is odd, so $F$ moves every orbit of size $m$ to some different orbit of size $m$ instead of itself.
Thus the number of orbits moved by $F$ are $\frac{q^{2n}-q^{2(n-1)}-2q^n+2q^{n-1}}{2}$. So the number of 2-cycles in this permutation are $\frac{q^{2n}-q^{2(n-1)}-2q^n+2q^{n-1}}{4}$. Let $Q_1=q^{2n}-q^{2(n-1)}-2q^n+2q^{n-1}.$ We have
\[ Q_1\mod8= \left\{
  \begin{array}{l l l}

    -4 & \quad \text{if $q\equiv3,7\mod8$}\\
    0  & \quad \text{ if $q\equiv1,5\mod8$, $p=2$ }\\
    \end{array} \right..\]
    This shows $\pi_{q^m}(F_2)$ is odd for $q\equiv3,7\mod8$ and even for $q\equiv1,5\mod8$, $p=2.$
    Thus $\pi_{q^m}(F_2)$ is odd for $q\equiv3,7\mod8$ with $m=2$ and is even for all other values of $q$ and $m$.
\end{proof}
\begin{lemma}\label{4.8}
 Let $m,p$ be prime integers, $q=p^l:$ $l\geq1$ and $\frac{q-1}{2}\notin2\Z$. Let $F_1:=(aX_1,X_2,\dots,X_n):$ $a\in(\F_q)^*.$ Then $\pi_{q^m}(F_1)\in\texttt{G}$ is odd permutation for $q=3,7\mod8$ with $m=2$ and is even permutation for all other  $m$ and $q.$
\end{lemma}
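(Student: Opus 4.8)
The plan is to deduce the sign of $\pi_{q^m}(F_1)$ from Lemma \ref{4.7} rather than re-run a counting argument. First I would dispose of the case $p=2$: then $\F_q^*$ has odd order, every homomorphism $\F_q^*\to\{\pm1\}$ is trivial, and, since the sign of $\pi_{q^m}(aX_1,X_2,\dots,X_n)$ depends on $a$ through such a homomorphism (as we note in a moment), $\pi_{q^m}(F_1)$ is even --- which is what the lemma claims in that range. So assume $q$ odd. The structural point is that the scalings $D:=\{(aX_1,X_2,\dots,X_n)\mid a\in\F_q^*\}$ form a subgroup of $\GL_n(\F_q)\subseteq\TA_n(\F_q)$ isomorphic to $\F_q^*$; composing the homomorphism $\TA_n(\F_q)\to\perm(\bar\X)$, $F\mapsto\pi_{q^m}(F)$, of image $\texttt{G}$, with the sign gives a character $\chi\colon\F_q^*\to\{\pm1\}$, $\chi(a)=\operatorname{sgn}\bigl(\pi_{q^m}(aX_1,X_2,\dots,X_n)\bigr)$. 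The whole lemma is the identification of $\chi$.

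Next I would use the hypothesis $\tfrac{q-1}{2}\notin2\Z$, i.e.\ $q\equiv3\pmod4$, so that $-1$ is a non-square and $\F_q^*=\langle-1\rangle\times(\F_q^*)^2$. If $a=b^2$ is a square then $(b^2X_1,X_2,\dots,X_n)=(bX_1,X_2,\dots,X_n)^{\circ2}$, whose image under $\pi_{q^m}$ is a square in $\perm(\bar\X)$, hence even; thus $\chi$ is trivial on squares, and writing a general $a$ as $(-1)^{\epsilon}b^2$ gives $\chi(a)=\chi(-1)^{\epsilon}$. It therefore remains only to compute $\chi(-1)=\operatorname{sgn}\bigl(\pi_{q^m}(-X_1,X_2,\dots,X_n)\bigr)$.

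For this I would argue by conjugacy into Lemma \ref{4.7}. Since $q$ is odd, both $\operatorname{diag}(-1,1,\dots,1)$ and the transposition $F_2=(X_2,X_1,X_3,\dots,X_n)$ are involutions in $\GL_n(\F_q)$ that are diagonalizable over $\F_q$ with the same multiset of eigenvalues $\{-1,1,\dots,1\}$, hence conjugate by some $g\in\GL_n(\F_q)\subseteq\TA_n(\F_q)$. Applying $\pi_{q^m}$, the permutation $\pi_{q^m}(-X_1,X_2,\dots,X_n)$ is conjugate in $\perm(\bar\X)$ to $\pi_{q^m}(F_2)$, so the two have the same sign. By Lemma \ref{4.7}, $\chi(-1)=-1$ exactly when $m=2$ (and here $q\equiv3\pmod4$ automatically forces $q\equiv3$ or $7\pmod8$), while $\chi(-1)=+1$ for every $m\neq2$. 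Substituting back, $\pi_{q^m}(F_1)$ is odd precisely when $m=2$ and $a$ is a non-square --- in particular for $a=-1$ --- and even in all remaining cases, which is the assertion of the lemma.

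The step I expect to need the most care is not a computation but this identification: recognizing that, under $\tfrac{q-1}{2}\notin2\Z$, the sign is governed entirely by the single involution $(-X_1,X_2,\dots,X_n)$, and that this involution is $\GL_n(\F_q)$-conjugate to the variable swap of Lemma \ref{4.7}, so the delicate mod-$8$ count is already done. The hypothesis is essential: for $q\equiv1\pmod4$ a non-square $a$ has multiplicative order $>2$, $\pi_{q^m}(F_1)$ is no longer an involution, and its sign would have to be read off from its cycle type directly. If a self-contained proof is wanted, one can instead count, exactly as in Lemma \ref{4.7}, the $r\in\X$ with $[(-r_1,r_2,\dots,r_n)]=[r]$, splitting on whether the witnessing $\phi\in\Delta$ is the identity; for $m=2$ this reproduces the quantity $Q_1$ of that proof, and hence the same conclusion.
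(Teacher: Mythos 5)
Your proof is correct, but it reaches the conclusion by a genuinely different route than the paper. The paper's own argument reduces to $\overline{F_1}=(-X_1,X_2,\dots,X_n)$ via $\operatorname{sign}(F_1)=\operatorname{sign}(F_1^{(q-1)/2})$ (using that $\tfrac{q-1}{2}$ is odd) and then repeats the fixed-orbit count of Lemma \ref{4.7} from scratch: it counts solutions of $r_1^{q^i-1}=-1$ in $\F_{q^m}$ (here the primality of $m$ is used again), arrives at the same quantities $Q$ and $Q_1$, and redoes the mod-$8$ computation. You instead reduce to $a=-1$ through the character argument on squares and then transfer the sign directly from Lemma \ref{4.7}, observing that $\operatorname{diag}(-1,1,\dots,1)$ and the transposition matrix are both involutions in $\GL_n(\F_q)$ ($q$ odd) diagonalizable with eigenvalues $\{-1,1,\dots,1\}$, hence conjugate in $\GL_n(\F_q)\subseteq\TA_n(\F_q)$, so their images in $\perm(\bar\X)$ are conjugate and have equal sign; this avoids duplicating the delicate count entirely, and the primality of $m$ enters only through the quoted lemma. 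Your version also makes explicit a point the paper glosses over: the parity depends on whether $a$ is a square (for square $a$, e.g.\ $a=1$, the permutation is even even when $m=2$), whereas the paper's identity $F_1^{(q-1)/2}=(-X_1,\dots,X_n)$ tacitly assumes $a$ is a non-square; since only non-square $a$ (or just the swap $F_2$) is needed in the proof of Proposition \ref{Main1}, this refinement does no harm and is arguably the more accurate statement. The trade-off is that the paper's proof is self-contained counting, while yours leans on the conjugacy observation and on Lemma \ref{4.7} doing the arithmetic.
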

\begin{proof}
\textbf{Case 1} Let $q=2^l:$ $l\geq1.$ Then $sign(F_1)=sign({F_1}^{q-1})=sign(a^{q-1}X_1,X_2,\dots,X_n)=sign(X_1,X_2,\dots,X_n).$ This shows $\pi_{q^m}(F_1)$ is even permutation in this case.\\
\textbf{Case 2} Let $\frac{q-1}{2}$ is odd integer.
Then $sign(F_1)=sign({F_1}^{\frac{q-1}{2}})$.
%Thus to check $sign(F_1)$, consider $\overline{F_1}:={F_1}^{\frac{q-1}{2}}=(-X_1,X_2,\dots,X_n).$
 Thus it is sufficient to consider $\overline{F_1}:={F_1}^{\frac{q-1}{2}}=(-X_1,X_2,\dots,X_n)$ instead of $F_1$ to check its sign.\\
  To see the sign of permutation $\pi_{q^m}(F_1)$ we will proceed similar to lemma \ref{4.7}. First we will see how many orbits $\overline{F_1}$ fixes and then subtract this number from total number of orbits. In this way we will get the total number of orbits moved by $\overline{F_1}$. Dividing this number by $2$ gives us the number of 2-cycles in $\pi_{q^m}(F_2).$\\
 To count the number of orbits fixed by map $\overline{F_1}$, consider $[r]=[\overline{F_1}(r)]$ for $r:=(r_1,r_2,\dots,r_n)\in\X$. Then $\overline{F_1}(r)=\phi(r)$ and in particular $-r_1=\phi(r_1)$, $r_2=\phi(r_2)$,\dots,$r_n=\phi(r_n)$ for all $\phi\in \Delta.$ For $\phi=id$, we have $r_1=0$ and $r_2,r_3,\dots,r_n\in\X$ arbitrary. This gives $q^{m(n-1)}-q^{n-1}$ points fixed by $\overline{F_1}$. For $\phi\neq id$, we have $-r_1=(r_1)^{q^i}$  for some integer $i$ and $r_2,r_3,\dots,r_n\in\F_q$ or equivalently $r_1=0$, ${r_1}^{q^i-1}=-1$ and $r_2,r_3,\dots,r_n\in\F_q$.
  Now we check how many solutions do the equations $(r_1)^{q^i-1}=-1:$ $1\leq i<m$ have in $(\F_{q^m})\setminus(\F_{q})$.
Let $\alpha$ be the generator of $(\F_{q^m})^*$ and let $r_1=\alpha^{\mu_k}$ satisfying $r_1^{q^i-1}=-1$ for some $\mu_k\in\Z$. This gives $\alpha^{\frac{q^m-1}{2}}=-1=\alpha^{\mu_k(q^i-1)}.$ Comparing powers we get $\mu_k=\frac{q^m-1}{2(q^i-1)}+k\frac{q^m-1}{(q^i-1)}:$ $k\in\Z$. We know $i|m$ if and only if $q^i-1|q^m-1$. As $m$ is prime therefore $q^i-1$ does not divides $q^m-1$ for $1<i<m$. So $\mu_k\not\in\Z$. Hence we have no solution for $r_1^{q^i-1}=-1$ when $1<i<m$. For $i=1$ we can write $\mu_k=\frac{q^{m-1}+q^{m-2}+\dots+1}{2}+k(q^{m-1}+q^{m-2}+\dots+1).$ In this case $\mu_k\not\in\Z$ if $m$ is odd prime and $\mu_k\in\Z$ if $m=2.$ The later case $m=2$ gives us $(q-1)$ solutions $r_1=\alpha^{\mu_k},$ satisfying $r_1^{q-1}=-1.$  Thus for $\phi\neq id$, we have $(q-1)q^{n-1}$ points fixed by $\overline{F_1}$ when $m=2$ and no point is fixed by $\overline{F_1}$ when $m$ is odd prime.\\
 For $m$ odd prime, the total number of points in $\X$ moved by $\overline{F_1}$ are  $(q^{mn}-q^n)-(q^{m(n-1)}-q^{n-1}):=Q.$
Thus the number of orbits moved by $F$ are $\frac{Q}{m}$. So the number of 2-cycles in this permutation are $\frac{Q}{2m}$. In this case $Q\equiv0\mod m$ and $Q\equiv0\mod4,$ thus $Q\equiv0\mod4m.$ Hence $\pi_{q^m}(F_1)$ is even permutation in this case.\\
For $m=2,$ the total number of points in $\X$ moved by $\overline{F_1}$ are  $(q^{mn}-q^n)-(q^{m(n-1)}-q^{n-1})-q^n+q^{n-1}=q^{2n}-q^{2(n-1)}-2q^n+2q^{n-1}:=Q_1.$
Thus the number of orbits moved by $F$ are $\frac{Q_1}{2}$. So the number of 2-cycles in this permutation are $\frac{Q_1}{4}$. In this case
\[ Q_1\mod8= \left\{
  \begin{array}{l l l}

    -4 & \quad \text{if $q\equiv3,7\mod8$}\\
    0  & \quad \text{ if $q\equiv1,5\mod8$ }\\
    \end{array} \right..\]
    Thus $F_1$ induces odd permutation for $q\equiv3,7\mod8$ with $m=2$ and even permutation for all other values of $q$ and $m$.\\
\end{proof}
\begin{lemma}\label{4.9}
Let $F=(X_1+f(X_2,X_3,\dots,X_n),X_2,\dots,X_n)$ then $\pi_{q^m}(F)\in\texttt{G}$ is even permutation.
\end{lemma}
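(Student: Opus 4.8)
The plan is to use that $F$ has order dividing $p$ as a polynomial map, and then --- only in characteristic $2$ --- to count fixed orbits exactly as was done in Lemmas \ref{4.7} and \ref{4.8}. Indeed $F^{k}=(X_{1}+kf(X_{2},\dots,X_{n}),X_{2},\dots,X_{n})$, so $F^{p}=\mathrm{id}$, and hence $\pi_{q^{m}}(F)$, viewed as a permutation of $\bar{\X}$, has order dividing $p$; thus it is a product of fixed points and $p$-cycles (and it is the identity if $f$ already induces the zero function on $\F_{q^{m}}^{n-1}$). If $p$ is odd then every $p$-cycle is an even permutation, so $\pi_{q^{m}}(F)$ is even and we are done. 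So from now on assume $p=2$; then $\pi_{q^{m}}(F)$ is an involution, its sign is $(-1)^{t}$ with $t=\tfrac12\bigl(\#\bar{\X}-\#\mathrm{Fix}_{\bar{\X}}(F)\bigr)$, and it remains to show that $t$ is even.

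To count $\#\mathrm{Fix}_{\bar{\X}}(F)$ I would argue as follows. An orbit $[r]$ of $r=(r_{1},\dots,r_{n})\in\X$ is fixed iff $F(r)=\phi(r)$ for some $\phi\in\Delta$, which forces $\phi$ to fix $r_{2},\dots,r_{n}$ and to satisfy $\phi(r_{1})=r_{1}+f(r_{2},\dots,r_{n})$. Since $m$ is prime, $d:=[\F_{q}(r_{2},\dots,r_{n}):\F_{q}]\in\{1,m\}$. If $d=m$ then $\phi=\mathrm{id}$, the condition is $f(r_{2},\dots,r_{n})=0$, and all $q^{m}$ values of $r_{1}$ give points of $\X$. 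If $d=1$, i.e.\ $(r_{2},\dots,r_{n})\in\F_{q}^{n-1}$ and $a:=f(r_{2},\dots,r_{n})\in\F_{q}$, then $r_{1}$ must generate $\F_{q^{m}}$ and the condition becomes either $a=0$ (take $\phi=\mathrm{id}$) or $r_{1}^{q^{j}}=r_{1}+a$ for some $1\le j<m$; in the latter case $L_{j}(x)=x^{q^{j}}-x$ is $\F_{q}$-linear on $\F_{q^{m}}$ with kernel $\F_{q}$ and image $\ker\operatorname{Tr}_{\F_{q^{m}}/\F_{q}}$, and since $\operatorname{Tr}_{\F_{q^{m}}/\F_{q}}(a)=m\,a$ is nonzero when $m$ is an odd prime (using $p=2$), there is no solution --- so only $a=0$ contributes (the case $m=p=2$, where the trace vanishes, must be handled separately and produces extra fixed orbits). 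For $m$ an odd prime this gives
\[ m\cdot\#\mathrm{Fix}_{\bar{\X}}(F)=q^{m}M_{0}+(q^{m}-q)P,\qquad m\cdot\#\bar{\X}=q^{mn}-q^{n}, \]
where $M_{0}=\#\{b\in\F_{q^{m}}^{n-1}\setminus\F_{q}^{n-1}:f(b)=0\}$ and $P=\#\{b\in\F_{q}^{n-1}:f(b)=0\}$.

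The remaining step --- and the one I expect to be the main obstacle --- is to verify that $t=\tfrac{1}{2m}\bigl((q^{mn}-q^{n})-q^{m}M_{0}-(q^{m}-q)P\bigr)$ is even. I would attack this by the same $2$-adic bookkeeping used in the $m=2$ cases of Lemmas \ref{4.7} and \ref{4.8}: the numerator is divisible by $m$ because every $\Delta$-orbit in $\X$ has size $m$; $\Delta$ acts freely on $\F_{q^{m}}^{n-1}\setminus\F_{q}^{n-1}$, hence on the zero set of $f$ inside it, so $M_{0}$ is even; and then one reduces $q^{mn}-q^{n}-q^{m}M_{0}-(q^{m}-q)P$ modulo a suitable power of $2$ using $q=2^{l}$, $n\geq3$ and $m\geq2$, the delicate point being to keep the power of $2$ and the factor $m$ synchronised. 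Should the direct estimate be too crude, I would instead compute the sign fibrewise: over each choice $X_{2}=b_{2},\dots,X_{n}=b_{n}$ the map $F$ acts on the fibre as a single translation of $\F_{q^{m}}$ by $f(b)$, commuting with $\Gal(\F_{q^{m}}/\F_{q}(b))$, so each fibre contributes an easily computed sign, and one checks that the product of these signs is $+1$.
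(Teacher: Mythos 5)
Your reduction to the case $p=2$ via $F^p=\mathrm{id}$ is correct (and for odd $p$ it is actually cleaner than the paper's argument), and your fixed-orbit count for $p=2$ and $m$ an odd prime is accurate. But the step you yourself flag as the main obstacle is a genuine gap, and it cannot be closed, because the parity you need fails for $q=2$. Splitting your count fibrewise: each base orbit $[b]$ with $b\notin\F_q^{n-1}$ and $f(b)\neq0$ contributes $q^m/2=2^{lm-1}$ transpositions (even), while each $b\in\F_q^{n-1}$ with $f(b)\neq0$ contributes $(q^m-q)/(2m)$ transpositions, which for $q=2$ equals the odd integer $(2^{m-1}-1)/m$; hence for $q=2$ one gets $t\equiv q^{n-1}-P\equiv P\pmod 2$. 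Concretely, take $q=2$, $m=3$, $n=3$, $f=X_2X_3$: then $M_0=12$, $P=3$, $\#\bar{\X}=168$, there are $38$ fixed orbits, and $t=65$, so this shear induces an \emph{odd} permutation of $\bar{\X}$ (in the unique fibre over $(1,1)$ the translation by $1$ swaps the two Frobenius orbits of $\F_8\setminus\F_2$). So neither the proposed $2$-adic bookkeeping nor the alternative ``product of fibrewise signs is $+1$'' can be carried out in all cases. Two smaller points: your justification that $M_0$ is even only gives $m\mid M_0$ with $m$ odd (harmless, since $q^mM_0$ is divisible by $4m$ anyway), and the case $m=p=2$ is deferred and never treated.

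For comparison, the paper argues differently: it introduces order-$p$ shears $\eta=(X_1+f_{[\alpha]},X_2,\dots,X_n)$ whose added polynomial is the $\F_q$-valued indicator of a single Galois orbit $[\alpha]$ with $\alpha\notin\F_q^{n-1}$, checks that each such $\eta$ induces an even permutation of $\bar{\X}$, and then asserts that maps ``of type $\eta$'' generate the shears. Your fibrewise analysis is precisely the verification that this last assertion requires, and it shows where it breaks: a general shear also translates the fibres over points of $\F_q^{n-1}$ by elements of $\F_q^*$, acting on the $(q^m-q)/m$ size-$m$ orbits there, and these contributions are not reached by the paper's $\eta$'s and can be odd when $q=2$. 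In other words, your computation, pushed honestly to the end, does not confirm the lemma but exposes that its blanket statement (and the paper's sketchy generation argument) needs a correction or an exclusion of the case $q=2$ with $m$ an odd prime.
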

\begin{proof} Define $f_{(\alpha_2,\alpha_3,\dots,\alpha_n)}\in\F_q[X_2,X_3,\dots,X_n]$ such that for any $\beta\in(\F_{q^m})^{n-1}\setminus(\F_{q})^{n-1}$
\[ f_{(\alpha_2,\dots,\alpha_n)}(\beta)= \left\{
  \begin{array}{l l l}

    1 & \quad \text{if $\beta=(\alpha_2,\dots,\alpha_n)$}\\
    0  & \quad \text{ if $\beta\neq(\alpha_2,\dots,\alpha_n)$ }\\
    \end{array} \right..\]
Define $\eta=(X_1+f_{(\alpha_2,\alpha_3,\dots,\alpha_n)},X_2,\dots,X_n).$ Then $\eta(a,\alpha_2,\alpha_3,\dots,\alpha_n)=(a+1,\alpha_2,\alpha_3,\dots,\alpha_n):$ $\eta^p=id$, and $\eta(\beta_1,\beta)=0$ for any $\beta\neq(\alpha_2,\alpha_3,\dots,\alpha_n)$, $(\beta_1,\beta)\in\X.$  Thus the order of $\eta$ is $p.$  Hence the permutation induced by $\eta$ is even. Since the maps of the type $\eta$ works as generator for shears,  $\pi_{q^m}(F)\in\texttt{G}$ is even.
\end{proof}
 \begin{proof} (of the main theorem \ref{Main1})
The group $\texttt{G}$ contains the group $Alt(\overline{\X})$ by the theorem \ref{Main}. Since $\TA_n(\F_q)$ is generated by the maps of the form $F_1$, $F_2$ and $F$ as in lemmas \ref{4.7}, \ref{4.8}, \ref{4.9} which are even except when $q\equiv3,7\mod 8$ and $m=2.$ Hence we have our desired result.

\end{proof}

\section{A bound on the index of $\pi_{q^m}(\TA_n(\F_q))$ in $ \pi_{q^m}(\GA_n(\F_q))$}
\label{Section6}

Theorem \ref{Main} tells us that the action of  $\TA_n(\F_q)$, if restricted to $\bar{\X}$, contains the action of $\Alt(\bar{\X})$. However, our goal is to understand $\pi_{q^m}(\TA_n(\F_q))$, and in particular compare it with $\pi_{q^m}(\MA_n(\F_q))\cap \perm(\F_{q^m}^n)$. Define $\MMA_n^m(\F_q):=\pi_{q^m}(\MA_n(\F_q))\cap \perm(\F_{q^m}^n).$ The following theorem estimates how far is the group $\pi_{q^m}(\TA_n(F_q))$ from $\MMA_n^m(\F_q).$
\begin{theorem}\label{main5.3}
 We have the following bound on index $$[\MMA_n^m(\F_q):\pi_{q^m}(\TA_n(F_q))]\leq2^{\sigma_0(m)}\prod_{d|m}d,$$
where $\sigma_0(m)$ is the total number of divisors of $m.$
\end{theorem}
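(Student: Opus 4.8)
The plan is to decompose the group $\MMA_n^m(\F_q)$ according to the orbit-size strata, compare each stratum with the corresponding image of $\TA_n(\F_q)$, and multiply the local indices. Recall from Corollary \ref{cor3.1} that, for the profinite picture, $\MMA_n(\F_q)=\bigoplus_{d\in\N}G_d$ with $G_d\cong\Z/d\Z\wr\Perm(r_d)$. For a fixed $m$, the finite group $\MMA_n^m(\F_q)$ decomposes in the same way, as a direct product over $d\mid m$ of factors $G_d^{(m)}\cong\Z/d\Z\wr\Perm(r_d)$ (the action of $\pi_{q^m}(\MA_n(\F_q))$ on orbits of size $d$, for each $d\mid m$, is exactly of this wreath-product shape because an endomorphism may send an orbit of size $d$ anywhere inside the set of orbits of size $d$ and may twist each by an arbitrary power of Frobenius — this is a direct consequence of Proposition \ref{smP1.1} and Lemma \ref{sm1.2}). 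So $[\MMA_n^m(\F_q):1]=\prod_{d\mid m}d^{r_d}\cdot r_d!$ and it suffices to bound, for each $d\mid m$, the index of the projection of $\pi_{q^m}(\TA_n(\F_q))$ into the $d$-stratum, and then observe that the total index is at most the product of the strata indices (since $\pi_{q^m}(\TA_n(\F_q))$ is a subgroup of the product, its index is at most the product of the indices of its images in the factors).

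The core input for each stratum is Theorem \ref{Main} applied with parameter $d$ in place of $m$: the image of $\TA_n(\F_q)$ in $\Perm(\bar\X_d)$ (the permutations of the $r_d$ orbits of size $d$) contains $\Alt(\bar\X_d)$. I would lift this: the image of $\pi_{q^m}(\TA_n(\F_q))$ in $G_d^{(m)}\cong(\Z/d\Z)^{r_d}\rtimes\Perm(r_d)$ therefore surjects onto a subgroup of $\Perm(r_d)$ containing $\Alt(r_d)$, giving a factor of at most $2$ from the $\Perm(r_d)/\Alt(r_d)$ part. For the "twist" part $(\Z/d\Z)^{r_d}$, I would argue that the tame maps realize enough Frobenius twists: using diagonal linear maps $(aX_1,X_2,\dots,X_n)$ with $a$ a generator of $\F_q^*$ together with the $\Alt(r_d)$-action one can generate, inside $(\Z/d\Z)^{r_d}$, at least the subgroup of index at most $d$ (the diagonal copy $\Z/d\Z$ can be split off, and the $\Alt$-action on the complementary $(\Z/d\Z)^{r_d-1}$ is transitive enough to force it to be everything when combined with a single nontrivial twist, modulo a bounded defect). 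Conservatively this yields an index at most $d$ coming from the normal subgroup $(\Z/d\Z)^{r_d}$ in that stratum, so the stratum index is at most $2d$. Multiplying over $d\mid m$ gives $\prod_{d\mid m}2d=2^{\sigma_0(m)}\prod_{d\mid m}d$, which is exactly the claimed bound.

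The step I expect to be the main obstacle is controlling the "twist" component $(\Z/d\Z)^{r_d}$: Theorem \ref{Main} only tells us about the permutation action on orbits, not about which Frobenius powers are attached, and a careful argument is needed to show that the tame group, acting by conjugation of the alternating part on the normal abelian part, cannot be confined to a subgroup of index larger than $d$. Concretely I would: (i) identify one explicit tame element whose image in $G_d^{(m)}$ has nontrivial twist on a single orbit (e.g. a linear diagonal map, analyzed as in Lemmas \ref{4.7}–\ref{4.8}); (ii) use that $\Alt(r_d)$ already lies in the image (Theorem \ref{Main}) to spread that twist around by conjugation, generating the augmentation submodule $\{(a_1,\dots,a_{r_d}):\sum a_i=0\}$ of $(\Z/d\Z)^{r_d}$ when $r_d\ge 3$; (iii) conclude that the missing part is at most the quotient $(\Z/d\Z)^{r_d}/(\text{augmentation submodule})\cong\Z/d\Z$, of order $d$. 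Assembling (i)–(iii) with the $\le 2$ from the permutation quotient and taking the product over divisors yields the theorem; the only genuinely delicate point is verifying that the small orbit-count cases ($r_d\le 2$) and the prime-power structure of $d$ do not worsen the bound beyond the stated $2d$ per stratum.
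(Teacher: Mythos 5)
There is a genuine gap at the very first reduction. You claim that, since $\pi_{q^m}(\TA_n(\F_q))$ sits inside the direct product $\prod_{d\mid m}G_d$, its index is at most the product of the indices of its \emph{projections} to the factors. That inequality is false in general: for the diagonal subgroup $H=\{(g,g)\}\le G\times G$ both projections are surjective, so your bound would give index $1$, while the true index is $\#G$. The correct inequality goes the other way ($[\,\prod G_d : H\,]\ge \prod [G_d:p_d(H)]$), so bounding the projection images proves nothing about the index you need. The paper avoids this by working with a filtration rather than projections: Lemma \ref{5.2}, iterated over the divisors $d_0\mid d_1\mid\cdots$, gives the \emph{exact} factorization $\#\pi_{q^m}(\TA_n(\F_q))=\prod_{d\mid m}\#Q_d$, where $Q_d$ is the image on the size-$d$ stratum of the subgroup $\TA_n(\F_q;\bigcup_{d_i<d}\X_{d_i})$ of tame maps that are the \emph{identity} on all smaller strata. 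Comparing stratum by stratum with $\#\MMA_n^m(\F_q)=\prod_{d\mid m}\#G_d$, the index is exactly $\prod_{d\mid m}[G_d:Q_d]$, and Lemma \ref{5.7} gives $[G_d:Q_d]\le 2d$. Without some analogue of this "act trivially below, measure on the current stratum" device (and the accompanying coset-counting of Lemma \ref{5.2}), your multiplicativity step does not go through.

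Your treatment of each stratum is close in spirit to the paper's (factor $\le 2$ from $\Alt(r_d)\subseteq R_d\subseteq\Perm(r_d)$ via Theorem \ref{Main}, factor $\le d$ from the twist part), but step (i) is also shaky: a diagonal linear map $(aX_1,X_2,\dots,X_n)$ with $a\in\F_q^*$ does not act on an orbit by a Frobenius twist at all, and producing a tame element that twists a \emph{single} orbit is more than the paper achieves or needs. Instead, Corollary \ref{5.5.4} builds, from commutators of explicit shears (refining the 3-cycle construction of Lemma \ref{3-cycles}), an element of $N_d$ that twists one chosen orbit by $\phi$ and another by $\phi^{-1}$; conjugating by the $2$-transitive action one only obtains the sum-zero vectors $\{(a_1,\dots,a_{r_d}):\sum a_i=0\}\subseteq(\Z/d\Z)^{r_d}$, which is precisely why the per-stratum defect is $d$ rather than something you must argue away "modulo a bounded defect." So your step (iii) lands on the right subgroup, but the route to it (single twist plus spreading) would, if it worked, generate more than the augmentation submodule and is not supported by the available tame constructions; the semidirect decomposition $Q_d=N_d\rtimes R_d$ and the explicit commutator element are the concrete ingredients your sketch is missing.
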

We will postpone the proof of this theorem to the end of this section. The remaining part of this section is devoted towards the preparation of the proof of theorem \ref{main5.3}.
\begin{definition}
% Define $\MMA_n^m(\F_q)=\pi_{q^m}(\MA_n(\F_q))\cap \perm(\F_{q^m}^n)$.
If $S\subseteq \bar{\F}_q^n$, define $\TA_n(\F_q;S)$ as the set of elements in $\TA_n(\F_q)$ which are the identity on $S$.
Similarly, if $S=\Delta S$ (i.e. it is a union of orbits) then define $\pi_S:\MA_n(\F_q)\lp \Maps(S, S)$
and thus also $\pi_S(\TA_n(\F_q)), \pi_S(\TA_n(\F_q;T)$ etc.
\end{definition}

Let $L$ be the union of all orbits of $\Delta$ acting on $\bar{\F}_q^n$ of order a strict divisor of $m$; i.e. $L$ contains
all orbits of size $d$, where $d|m$ but not the ones of size $m$.
Then we have $\pi_L(\TA_n(\F_q))$ as well as $\pi_{q^m}(\TA_n(\F_q;L))$.
The first are permutation on $L$, the second permutations on $\F_{q^m}^n$ which fix $L$. Is there some way to glue these to
get $\pi_{q^m}(\TA_n(\F_q))$? Well, only in part: $\pi_{q^m}(\TA_n(\F_q))$ is not
a semidirect product of the other two, but their sizes compare::

%Let $\X_d=\bigcup^{r_d}_{i=1}\OO_i$ with $\OO_i$ being orbits of size $d$ for all i.
\begin{lemma}\label{5.2}
 \[\#\pi_{q^m}(\TA_n(\F_q))=\#\pi_{{\X}_1}(\TA_n(\F_q))\cdot \#\pi_{q^m}(\TA_n(\F_q;{\X_1})).\]
\end{lemma}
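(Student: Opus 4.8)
The plan is to exhibit a surjective group homomorphism from $\pi_{q^m}(\TA_n(\F_q))$ onto $\pi_{\X_1}(\TA_n(\F_q))$ whose kernel is precisely $\pi_{q^m}(\TA_n(\F_q;\X_1))$, and then read off the stated equality as the first isomorphism theorem for finite groups. Throughout, $\X_1$ denotes the set $L$ introduced just above, i.e. the union of those $\Delta$-orbits in $\bar{\F}_q^n$ whose size is a proper divisor of $m$; in particular $\X_1\subseteq\F_{q^m}^n$ and $\F_{q^m}^n=\X_1\sqcup\X$.

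First I would check that restriction to $\X_1$ gives a well-defined homomorphism
$$\rho:\pi_{q^m}(\TA_n(\F_q))\longrightarrow\perm(\X_1),\qquad \pi_{q^m}(F)\longmapsto\pi_{\X_1}(F).$$
Since every $F\in\TA_n(\F_q)$ is defined over $\F_q$, Proposition~\ref{smP1.1} says $\pi_{q^m}(F)$ commutes with $\Delta=\Gal(\F_{q^m}:\F_q)$; hence for $\alpha\in\F_{q^m}^n$ the $\Delta$-stabiliser of $\alpha$ is contained in that of $\pi_{q^m}(F)(\alpha)$, so the orbit of $\pi_{q^m}(F)(\alpha)$ has size dividing the orbit size of $\alpha$. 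Therefore $\pi_{q^m}(F)$ maps $\X_1$ into $\X_1$; being injective (as $F\in\GA_n(\F_q)$) on the finite set $\X_1$, it in fact permutes $\X_1$, and this permutation equals $\pi_{\X_1}(F)$ and depends only on $\pi_{q^m}(F)$, not on the chosen tame representative $F$. Hence $\rho$ is a well-defined group homomorphism.

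Next I would identify its image and kernel. The image is $\{\pi_{\X_1}(F):F\in\TA_n(\F_q)\}=\pi_{\X_1}(\TA_n(\F_q))$, so $\rho$ is onto it. For the kernel, $\rho(\pi_{q^m}(F))=\mathrm{id}$ means $\pi_{\X_1}(F)=\mathrm{id}$, i.e. $F(\alpha)=\alpha$ for every $\alpha\in\X_1$, i.e. $F\in\TA_n(\F_q;\X_1)$; conversely any $G\in\TA_n(\F_q;\X_1)$ fixes $\X_1$ pointwise, so $\pi_{q^m}(G)\in\ker\rho$. Thus $\ker\rho=\pi_{q^m}(\TA_n(\F_q;\X_1))$, and the first isomorphism theorem (read as a statement about cardinalities) gives
$$\#\pi_{q^m}(\TA_n(\F_q))=\#\ker\rho\cdot\#\pi_{\X_1}(\TA_n(\F_q))=\#\pi_{q^m}(\TA_n(\F_q;\X_1))\cdot\#\pi_{\X_1}(\TA_n(\F_q)),$$
which is the assertion.

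There is no deep obstacle here; the one point demanding care is the well-definedness of $\rho$ — that $\pi_{q^m}(F)$ really carries $\X_1$ bijectively onto $\X_1$ (this is exactly where the Galois-equivariance of Proposition~\ref{smP1.1} together with the invertibility of $F$ enters) and that its restriction does not depend on the tame representative. The identification $\ker\rho=\pi_{q^m}(\TA_n(\F_q;\X_1))$ is then immediate, and it also explains the remark preceding the lemma that the group is \emph{not} a semidirect product: $\rho$ need not split, since a permutation of $\X_1$ coming from a tame map need not be the restriction of a tame permutation of $\F_{q^m}^n$ that is the identity on $\X=\F_{q^m}^n\setminus\X_1$.
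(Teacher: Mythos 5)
Your proposal is correct and is essentially the paper's own argument: the paper picks a system of representatives of $\pi_{\X_1}(\TA_n(\F_q))$ inside $\TA_n(\F_q)$ and counts cosets of the elements restricting to the identity on $\X_1$, which is exactly your restriction homomorphism $\rho$ together with the first isomorphism theorem. One notational slip: in the paper $\X_1$ is the union of the size-one orbits (i.e.\ $\F_q^n$), not the set $L$ of orbits of size a proper divisor of $m$; this does not affect your argument, which works verbatim for any union of $\Delta$-orbit-size classes.
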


\begin{proof} Pick a representant system $R$ in $\TA_n(\F_q))$ of $\pi_{\X_1}(\TA_n(\F_q))$. Then for each $\pi_{q^m}(F)\in \pi_{q^m}(\TA_n(\F_q))$ there exists a unique $G\in R$ such that $\pi_{q^m}(GF)\in \pi_{q^m}(\TA_n(\F_q;{\X_1}))$.
Thus, $\# R \cdot\#\pi_{q^m}(\TA_n(\F_q;{\X_1}))=\# \pi_{q^m}(\TA_n(\F_q))$.
\end{proof}
%\begin{lemma}
%Define $S^c$ as the complement of a set in $(\F_{q^m})^n$. Let $K<\GA_n(\F_q)$. Then
%\[ \pi_{q^m}(K)=\pi_{\OO_1}(K)\rtimes \pi_{\OO_1^c}(K_{\OO_1}). \]
%\end{lemma}
%
%\begin{proof} Lets write $G=\pi_{q^m}(K), H=\pi_{\OO_1}(K), N=\pi_{\OO_1^c}(K)$, and interpret all these groups as subgroups of $\perm( (\F_{q^m})^n)$.
%The map $K\lp \pi_{\OO_1}(K)$ has $K_{\OO_1}$ as kernel, and thus this is a normal subgroup of $K$.
%This implies that $N$ is a normal subgroup of $G$.
%Furthermore, their intersection is obviously only the identity map.
%Then, any element $\sigma\in G$ fixes the set $\OO_1$ (not the individual elements), and thus there exists a unique element $\tau\in H$ such that $h^{-1}\sigma\in N$, i.e $h^{-1}\sigma=n\in N$ for some unique $n$ and thus $\sigma = hn$ where $h,n$ are unique, satisfying one of the definitions of semidirect product.
%\end{proof}

%The above is of course the most important for $H=\TA_n(\F_q)$.
 The same proof works too to
 \[  \#\pi_{q^m}(\TA_n(\F_q;{\X_1}))=\pi_{\X_2}(\TA_n(\F_q;{\X_1}))\cdot \pi_{q^m}(\TA_n(\F_q;{\X_1\cup \X_2}).\]
Meaning, we can decompose $\#\pi_{q^m}(\TA_n(\F_q))$ into smaller parts: let $d_0:=1,d_1,d_2,\ldots,d_{m-1},d_m:=m $ be the increasing list of divisors of $m$. Let $Q_j=\pi_{\X_{d_{j}}}(\TA_n(\F_q;\bigcup_{i=0}^{i=d_{j-1} }\X_i))$ for all $0\leq j\leq m-1$ and $Q_m=\pi_{q^m}(\TA_n(\F_q;L))$ where  $L$ be the union of orbits of size $d|m, d\not =m$, then\\
$$\#\pi_{q^m}(\TA_n(\F_q))=\#Q_{d_0}\cdot\#Q_{d_1}\dots\cdots\#Q_{d_m}.$$
%$\#\pi_{X_{d_0}}(\TA_n(\F_q))\cdot\#\pi_{\X_{d_1}}(\TA_n(\F_q;{\X_{d_0}}))\cdot\#\pi_{\X_{d_2}}(\TA_n(\F_q;{\X_{d_0}\cup \X_{d_1}}))\dots\#\pi_{\X_{d_{m-1}}}(\TA_n(\F_q;\bigcup_{i=0}^{i=d_{m-2} }\X_i))\cdot\#\pi_{q^m}(\TA_n(\F_q;L)),$\\

The same construction will work for the group $\MMA_n^m(\F_q).$ Meaning, we can decompose $\#\MMA_n^m(\F_q)$ into smaller parts:
$$\#\MMA_n^m(\F_q)=\#G_{d_0}\cdot\#G_{d_1}\dots\cdots\#G_{d_m},$$
where $G_{d_i}$ are defined in corollary \ref{cor3.1}.\\

 Define two subgroups of $Q_d$ by $N_d:=\{m\in Q_d|m(\OO_i)=\OO_i\}$ and $R_d:=\{m\in Q_d|\, m(\{\alpha_1,\dots,\alpha_{r_d}\}) \subseteq \{\alpha_1,\dots,\alpha_{r_d}\}\}$ where $\alpha_i \in\OO_i$ be the fixed representatives of orbits $\OO_i$ of size $d$. The group $N_d$ moves elements inside the orbits $\OO_i$ by keeping orbits $\OO_i$ fixed for all $i$ and the group $R_d$ moves only orbits to orbits. By definition  $N_d=Ker\{H_d\lp \pi_{\overline{\X_{d_{d}}}}(\TA_n(\F_q;\bigcup_{i=0}^{i=d_{d-1} }\overline{\X_i}))\}$ and by theorem \ref{Main} $\Alt\{\alpha_1,\dots,\alpha_{r_d}\}\subseteq R_d\subseteq\Perm\{\alpha_1,\dots,\alpha_{r_d}\}$. Hence the group $N_d$ is normal in group $Q_d.$
 \begin{proposition}
  The group $Q_d$ can be represented as a semidirect product of groups $N_d$ and $R_d$.
  \end{proposition}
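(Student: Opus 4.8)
The plan is to exhibit $Q_d$ as an internal semidirect product $N_d \rtimes R_d$ by verifying the three standard criteria: $N_d \trianglelefteq Q_d$, $N_d \cap R_d = \{1\}$, and $N_d R_d = Q_d$. The first of these is already established in the paragraph preceding the statement, since $N_d$ is the kernel of a group homomorphism $Q_d \to \pi_{\overline{\X}_{d}}(\cdots)$; I would simply recall this. For the trivial-intersection condition, observe that an element of $R_d$ permutes the chosen representatives $\alpha_1,\dots,\alpha_{r_d}$ among themselves, while an element of $N_d$ fixes each orbit $\OO_i$ setwise, hence fixes the orbit containing $\alpha_i$; if such an element also lies in $R_d$ it must send each $\alpha_i$ to some $\alpha_j$, but the only element of $\OO_i$ among the representatives is $\alpha_i$ itself, so it fixes every $\alpha_i$. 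An element of $Q_d$ commuting with $\Gal(\F_{q^d}:\F_q)$ that fixes a representative $\alpha_i$ fixes the whole orbit $\OO_i$ pointwise (since $\OO_i = \Delta\alpha_i$ and the element commutes with $\Delta$), so it is the identity.

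For the product condition $N_d R_d = Q_d$, take any $P \in Q_d$. By Corollary \ref{cor3.1} (applied to the relevant ``level'', i.e. the orbits of size $d$ occurring here), $P$ induces a permutation $\sigma_P \in \Perm(r_d)$ of the orbits, and since by the remarks before the statement $\Alt\{\alpha_1,\dots,\alpha_{r_d}\} \subseteq R_d$, the subgroup $R_d$ surjects onto the group of orbit-permutations actually realized (it is either $\Alt(r_d)$ or all of $\Perm(r_d)$, and in either case $R_d$ maps onto it since $R_d$ is precisely the preimage of that group under restriction to $\{\alpha_1,\dots,\alpha_{r_d}\}$, intersected with $Q_d$). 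Hence pick $\rho \in R_d$ inducing the same orbit-permutation $\sigma_P$; then $\rho^{-1}P$ induces the trivial permutation of orbits, i.e. fixes each $\OO_i$ setwise, so $\rho^{-1}P \in N_d$. Writing $P = \rho \cdot (\rho^{-1}P)$ gives $P \in R_d N_d$; since $N_d$ is normal this is the same as $N_d R_d$. Combined with $N_d \cap R_d = \{1\}$ and normality of $N_d$, this yields $Q_d = N_d \rtimes R_d$.

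The one point that needs a little care — and which I expect to be the main obstacle — is the surjectivity of $R_d$ onto the realized orbit-permutation group. It is tempting to invoke Theorem \ref{Main} directly, but that theorem concerns $\X$ = orbits of size $m$ in $\bar{\F}_q^n$, whereas here we are looking at $\X_d$ (orbits of size $d$) and moreover at the group $Q_d$, which consists of permutations of $\X_d$ arising from tame maps that are already the identity on all smaller orbits $\bigcup_{i<d}\X_i$. One must check that the ``identity on smaller orbits'' constraint does not shrink the induced action on $\overline{\X_d}$ below $\Alt(\overline{\X_d})$; this is exactly what is encoded in the preceding paragraph's assertion $\Alt\{\alpha_1,\dots,\alpha_{r_d}\} \subseteq R_d$, so strictly speaking I am entitled to use it, but I would state explicitly that this is the input being used, so the reader sees the dependency on Theorem \ref{Main} (via the analysis of $Q_d$) clearly. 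Everything else is routine internal-semidirect-product bookkeeping.
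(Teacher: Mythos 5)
Your overall route is the same as the paper's: exhibit $Q_d$ as an internal semidirect product by checking normality of $N_d$ (via the kernel description), $N_d\cap R_d=\{id\}$, and $Q_d=N_dR_d$. In fact your product step is slightly leaner than the paper's: the paper asserts that for $\tau\in Q_d$ with $\tau(\alpha_i)=\phi^{\mu_i}(\alpha_{\sigma(i)})$ one can find \emph{both} an $m_0\in N_d$ realizing the twists $\phi^{\mu_i}$ and an $m_1\in R_d$ realizing $\sigma$, whereas you only need a $\rho\in R_d$ inducing the same orbit permutation as $P$, after which $\rho^{-1}P\in N_d$ is automatic from the definition of $N_d$ as the elements of $Q_d$ fixing every $\OO_i$ setwise. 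Your treatment of the trivial intersection (commutation with $\Delta$ forces an element fixing each representative to be the identity) is also more complete than the paper's ``clear by definition''.

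The step I cannot accept as written is the surjectivity claim, and you correctly sense that this is the crux. From $\Alt\{\alpha_1,\dots,\alpha_{r_d}\}\subseteq R_d$ you get a matching $\rho\in R_d$ only when $\sigma_P$ is even. If some $P\in Q_d$ induces an \emph{odd} permutation of the orbits, your parenthetical reason --- that $R_d$ is ``the preimage of the realized group under restriction to the representatives, intersected with $Q_d$'' --- does not produce an element of $R_d$ with odd restriction: an element of $Q_d$ inducing an odd orbit permutation need not send representatives to representatives, and stripping off its Galois twists while staying inside $Q_d$ is exactly the content of the proposition, so the argument is circular at that point. (Granting normality and trivial intersection, ``$R_d$ surjects onto the orbit permutations realized by $Q_d$'' is \emph{equivalent} to the proposition, so it cannot be had for free; a subgroup of a semidirect product is in general not the semidirect product of its intersections with the two factors.) To be fair, the paper's own proof does no better --- it simply asserts the existence of $m_0$ and $m_1$ --- so your write-up is on par with the paper except that the specific justification you offer for the odd case is not valid. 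Either state the realizability assertion as an input, as the paper implicitly does, or derive it from the explicit constructions behind Theorem \ref{Main}, which produce tame maps fixing the smaller orbits and sending prescribed \emph{points} (not merely prescribed orbits) to prescribed points.
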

  \begin{proof}
 % Let define $$N_d:=\{m\in G_d|m(\OO_i)=\OO_i\}$$ then
% $$N_d=Ker\{G_d\lp \Perm(\bar{X_d})\},$$ where $\bar{X_d}$ is the set of orbits of size $d.$ Thus the group $N_d$ is normal in group $G_d$. Pick $\alpha_i \in\OO_i$ the representatives. Define the group $H_d:=\{m\in G_d|\, m(\{\alpha_1,\dots,\alpha_n\}) \subseteq \{\alpha_1,\dots,\alpha_n\}\}.$ Then $H_d\cong\Perm\{\alpha_1,\dots,\alpha_n\}.$ $N_d\cap H_d=\{id\}$ clear by definition of $N_d$ and $H_d.$
Since $Q_d=\pi_{\X_{d_{d}}}(\TA_n(\F_q;\bigcup_{i=0}^{d_{d-1} }\X_i)),$ if $\tau\in Q_d$ then $\tau(\alpha_i)=\phi^{\mu_i}(\alpha_{\sigma(i)})$ where $\sigma\in\Perm{(r_d)}$, $\phi$ is generator for $\Gal(\F_{q^m}:\F_q)$ and $\mu_i$ is an integer. Then we can find $m_0\in N_d$ and $m_1\in R_d$ such that $m_0(\alpha_i)=\phi^{\mu_i}(\alpha_i)$ and $m_1(\alpha_i)=\alpha_{\sigma(i)}$ and $m_1m_0(\alpha_i)=m_1(\phi^{\mu_i}(\alpha_i))=\phi^{\mu_i}(m_1(\alpha_i))=\phi^{\mu_i}(\alpha_{\sigma(i)}).$ Thus $R_dN_d=Q_d.$ $N_d\cap R_d=\{id\}$ clear by definition of $N_d$ and $R_d.$ Since $N_d$ is normal in $Q_d$,  all conditions for one of the definitions of semidirect product is satisfied.
%Hence $Q_d$ is semidirect product of $N_d$ and $R_d.$
\end{proof}
\begin{corollary}(of the lemma \ref{3-cycles})\label{5.5.4}
 Let $\alpha_i\in\OO_i$ be the representant of orbits of size $d$. For any $i,j\in\N$, there exist a map $\zeta\in N_d$ such that $\zeta(\alpha_i)=\phi^{-1}(\alpha_i)$ and $\zeta(\alpha_j)=\phi(\alpha_j)$ for all $\phi\in\Delta$ and $\zeta(\alpha_k)=\alpha_k$ for all $k\neq i,j.$
\end{corollary}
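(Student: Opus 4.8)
The plan is, first, to reduce the statement (via conjugation) to producing one single map $\zeta_0$ for one convenient pair of orbits, and second, to build that $\zeta_0$ explicitly as a short word in elementary tame maps — indeed as a commutator, in the same spirit as $w=s^{-1}d^{-1}sd$ in Lemma \ref{3-cycles}.

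For the reduction, recall the observation that $\Alt\{\alpha_1,\dots,\alpha_{r_d}\}\subseteq R_d$ and that an element of $R_d$ permutes the fixed representatives $\alpha_1,\dots,\alpha_{r_d}$ with no $\phi$-twist. Given $i\neq j$, lift a permutation $\sigma\in\Alt(r_d)$ with $\sigma(i_0)=i,\ \sigma(j_0)=j$ to a tame map $\theta$ representing an element of $R_d$ (this is possible since $\Alt(r_d)$ is $2$-transitive for $r_d\geq4$; the cases $r_d\in\{2,3\}$ are handled directly, using also $\zeta_0^{-1}$). Since $\theta$ commutes with $\Delta$ and sends $\alpha_{i_0}\mapsto\alpha_i,\ \alpha_{j_0}\mapsto\alpha_j$ exactly, the conjugate $\theta\zeta_0\theta^{-1}\in N_d$ then satisfies $\theta\zeta_0\theta^{-1}(\alpha_i)=\phi^{-1}(\alpha_i)$, $\theta\zeta_0\theta^{-1}(\alpha_j)=\phi(\alpha_j)$, and fixes $\OO_k$ pointwise for $k\neq i,j$. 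So it suffices to construct $\zeta_0$ for a single well-chosen pair.

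For that pair, take $t$ a generator of $\F_{q^d}\subseteq\F_{q^m}$ and set $\OO_{i_0}=[(t,0,\dots,0)]$, $\OO_{j_0}=[(0,t,0,\dots,0)]$, both of size $d$. The idea is to imitate Lemma \ref{3-cycles} but with the ``pick-up'' of the generalized shears being a Frobenius value rather than the constant $1$: one uses triangular maps of the form $(\dots,X_k+f_{a,1}(\dots)X_\ell^{q^c},\dots)$, where $f_{a,1}$ is the indicator of Lemma \ref{def1.4}(1) cutting out a single size-$d$ orbit and $X_\ell^{q^c}$ supplies the twist. Composing a few such maps — and using the third coordinate purely as scratch space, which is the only place $n\geq3$ enters — one transports the value $t^{q^e}$ sitting in the first slot of $\OO_{i_0}$ one Frobenius step backward while simultaneously pushing the value in the second slot of $\OO_{j_0}$ one step forward, the two routines being designed as mirror images so that, after composing, every auxiliary orbit displaced along the way is returned to itself. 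The resulting $\zeta_0$ is a tame map over $\F_q$ fixing every orbit of size a proper divisor of $m$ pointwise (hence representing an element of $Q_d$), fixing every orbit of size $d$ setwise (hence lying in $N_d$), acting as rotation by $-1$ on $\OO_{i_0}$, by $+1$ on $\OO_{j_0}$, and trivially elsewhere — which is what is wanted.

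The main obstacle is precisely the bookkeeping just described. A triangular map that alters $X_k$ can only be conditioned on the remaining coordinates, so the ``flag'' selecting an orbit cannot depend on the very coordinate being moved, and a single flagged shear inevitably disturbs a whole pencil of neighbouring size-$d$ orbits. Making all these spurious disturbances cancel is what forces $\OO_{i_0}$ and $\OO_{j_0}$ to be treated together with opposite twists — a lone orbit cannot in general be rotated by a tame map fixing all the others, which is ultimately responsible for the factor $\prod_{d\mid m}d$ in Theorem \ref{main5.3} — and what forces the extra coordinate, i.e.\ $n\geq3$. Once the cancellation is set up, verifying that $\zeta_0$ lands in $N_d$ and realizes the prescribed rotations is a routine check on the interpolation polynomials, of the flavour of Lemmas \ref{3-cycles}--\ref{1.61}.
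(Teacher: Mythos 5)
Your overall architecture agrees with the paper's: build one explicit element of $N_d$ that twists a distinguished pair of size-$d$ orbits in opposite Galois directions while fixing everything else, then conjugate it by an element of $Q_d$ (available thanks to the 2-transitivity supplied by Theorem \ref{Main}) to reach an arbitrary pair $\alpha_i,\alpha_j$. Your reduction step is fine, and in fact simpler than you make it: since every map in sight commutes with $\Delta$, any $\xi\in Q_d$ with $\xi(A)\in[\alpha_i]$ and $\xi(C)\in[\alpha_j]$ works, Galois twists in the conjugator are harmless, and no lifting to $R_d$ or case split on small $r_d$ is needed.

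The genuine gap is in the construction of $\zeta_0$ itself, which is the entire content of the corollary. You assert that two ``mirror-image'' routines, one pulling the value in $\OO_{i_0}=[(t,0,\dots,0)]$ one Frobenius step backward and one pushing $\OO_{j_0}=[(0,t,0,\dots,0)]$ forward, can be arranged so that every auxiliary orbit displaced along the way returns to itself; but no explicit word in tame maps is written down, and the mechanism you name cannot by itself supply the cancellation. Any flagged shear in the $\OO_{i_0}$-routine is conditioned on the coordinates other than the one it moves, so it displaces a whole pencil of points such as $(\phi^j(t),0,x_3,0,\dots,0)$ or $(x_1,0,\phi^j(t),0,\dots,0)$ with $x_1,x_3$ arbitrary; these pencils are disjoint from everything a routine whose flags involve $t$ in the second slot ever touches, so mirroring the construction at $\OO_{j_0}$ cannot undo them — they must be repaired by the $\OO_{i_0}$-routine itself, which is precisely a commutator/conjugation structure and not a pairing of the two orbits. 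The paper does exactly this: it takes the untwisted $3$-cycle $w=s^{-1}d^{-1}sd$ of Lemma \ref{3-cycles} and a Galois-twisted companion $\tilde w=\tilde s^{-1}s^{-1}d^{-1}\tilde s sd$, where $\tilde s=s_1s_2s_3$ uses the third coordinate as scratch and interpolation values such as $\phi(t)-t$ from Lemma \ref{def1.4}, checks that both words move only the three orbits $[A],[B],[C]$ (their support collapses because each flagged shear is undone inside the word), and then sets $\eta=\tilde w^{-1}w\in N_d$, which acts by $\phi^{-1}$ on $[A]$, by $\phi$ on $[C]$, trivially elsewhere, and is finally conjugated as above. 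Without this, or an equally explicit substitute with a verified cancellation, your $\zeta_0$ is not constructed; note also that your aside that a lone orbit can never be rotated (offered as the reason for the factor $\prod_{d\mid m}d$ in Theorem \ref{main5.3}) is nowhere established in the paper and is not needed for the statement.
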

\begin{proof}
Define $A:=(-1,0,\dots,0,t),B:=(0,-1,0,\dots,0,t),C:=(0,0,\dots,0,t),$ where $t$ is a generator of the multiplicative group $(\F_{q^d})^*.$
Let $\tilde{s}=s_1s_2s_3$ and define $\tilde{w}=\tilde{s}^{-1}s^{-1}d^{-1}\tilde{s}sd$, where \[s_3=(X_1,X_2,X_3+f_3(X_2,X_n),X_4,\dots,X_n),\]\[s_2=(X_1,X_2,\dots,X_{n-1},X_n+f_2(X_2,X_3)),\] \[s_1=(X_1,X_2,X_3+f_1(X_2,X_n),X_4,\dots,X_n),\]
 \[ {f_{3}}(a_2, a_n)= \left\{
  \begin{array}{l l l}

    t & \quad \text{if $(a_2, a_n)=(0,t)$}\\
    0  & \quad \text{elsewhere if $[(a_2, a_n)]\neq[(0,t)]$ }\\
    \end{array} \right.,\]
    \[ {f_{2}}(a_2, a_3)= \left\{
  \begin{array}{l l l}

    \phi(t)-t & \quad \text{if $(a_2, a_3)=(0,t)$}\\
    0  & \quad \text{elsewhere if $[(a_2, a_3)]\neq[(0,t)]$ }\\
    \end{array} \right.,\]
    \[ {f_{1}}(a_2, a_n)= \left\{
  \begin{array}{l l l}

    -t & \quad \text{if $(a_2, a_n)=(0,\phi(t))$}\\
    0  & \quad \text{elsewhere if $[(a_2, a_n)]\neq[(0,\phi(t))]$ }\\
    \end{array} \right.,\]
    and $s,d$ are maps as defined in lemma \ref{3-cycles}.

Note that \[s^{-1}_3=(X_1,X_2,X_3-f_3(X_2,X_n),X_4,\dots,X_n),\]\[s^{-1}_2=(X_1,X_2,\dots,X_{n-1},X_n-f_2(X_2,X_3)),\] \[s^{-1}_1=(X_1,X_2,X_3-f_1(X_2,X_n),X_4,\dots,X_n).\]

    Thus $\tilde{w}(A)=\phi(B)$, $\tilde{w}(B)=C$ and $\tilde{w}(C)=\phi^{-1}(A).$ Define $\eta:=\tilde{w}^{-1}w$, where $w$ is as defined in lemma \ref{3-cycles}.
    Thus $\eta(A)=\tilde{w}^{-1}w(A)=\tilde{w}^{-1}(B)=\phi^{-1}(A)$, $\eta(B)=\tilde{w}^{-1}w(B)=\tilde{w}^{-1}(C)=B$ and $\eta(C)=\tilde{w}^{-1}w(C)=\tilde{w}^{-1}(A)=\phi(C)$.\\
     From lemma \ref{3-cycles}, $s$ permutes only the set $L_1$ and $d$ permutes only the set $L_2$. Then $sd$ (and hence $s^{-1}d^{-1}$) acts trivially on $\bar{\X_d}\setminus(L_1\bigcup L_2)$ and nontrivially only on a subset of $L_1\bigcup L_2$. Thus the map $\tilde{w}$  acts trivially on $\bar{\X_d}\setminus(L_1\bigcup L_2)$ and nontrivially only on a subset of $L_1\bigcup L_2$. Since the map $w$ fixes all orbits except $[A]$, $[B]$ and $[C]$, hence the map $\tilde{w}$ can only acts nontrivially on orbits $[A]$, $[B]$ and $[C]$ due to definition of $\tilde{s}$.\\
Since the group $Q_d$ contains the alternating permutation group on $\{\alpha_1,\dots,\alpha_{r_d}\}$ by theorem \ref{Main},  it is 2-transitive. Thus there exist a map $\xi\in Q_d$ such that $\xi(A)=\alpha_i$ and $\xi(C)=\alpha_j.$ Hence $\zeta=\xi\eta\xi^{-1}$ is our required map.
\end{proof}
\begin{corollary} The group $N_d$ contains a group isomorphic to $\{(a_1,a_2,\dots,a_{r_d})\in(\Z/\Z)^{r_d};a_1+a_2+\dots+a_{r_d}=0\}.$
\end{corollary}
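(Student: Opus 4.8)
The plan is to exploit the embedding of $N_d$ into $(\Z/d\Z)^{r_d}$ furnished by Corollary \ref{cor3.1}, together with the explicit elements constructed in Corollary \ref{5.5.4}. First I would recall that, by definition, $N_d$ consists of those $m\in Q_d$ that fix every size-$d$ orbit $\OO_k$ setwise; such an $m$ acts on $\OO_k$ as a power $\phi^{a_k}$ of the generator $\phi$ of $\Gal(\F_{q^d}:\F_q)$. This produces an injective homomorphism
\[
\iota:N_d\lp (\Z/d\Z)^{r_d},\qquad m\longmapsto (a_1,\dots,a_{r_d}),\quad\text{where } m(\alpha_k)=\phi^{a_k}(\alpha_k).
\]
That $\iota$ is a homomorphism (and in particular that $N_d$ is abelian) is exactly the multiplication rule of Corollary \ref{cor3.1} specialised to the case of trivial orbit-permutation part, where it reads $(a_k)\cdot(b_k)=(a_k+b_k)$.

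Next, for each pair $i\neq j$ I would invoke Corollary \ref{5.5.4} to obtain $\zeta_{ij}\in N_d$ with $\zeta_{ij}(\alpha_i)=\phi^{-1}(\alpha_i)$, $\zeta_{ij}(\alpha_j)=\phi(\alpha_j)$ and $\zeta_{ij}(\alpha_k)=\alpha_k$ for $k\neq i,j$; under $\iota$ this is precisely the vector $e_j-e_i$, where $e_1,\dots,e_{r_d}$ denotes the standard basis of $(\Z/d\Z)^{r_d}$. The remaining step is the elementary observation that the vectors $e_j-e_1$ for $2\leq j\leq r_d$ generate the augmentation subgroup $S:=\{(a_1,\dots,a_{r_d}):\sum_k a_k=0\}$: any element of $S$ has $a_1=-\sum_{j\geq2}a_j$ and hence equals $\sum_{j\geq2}a_j(e_j-e_1)$. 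Therefore the subgroup of $N_d$ generated by $\zeta_{12},\zeta_{13},\dots,\zeta_{1r_d}$ is carried by the injective homomorphism $\iota$ onto $S$, so it is isomorphic to $S$, which is the asserted group (the "$\Z/\Z$" in the statement should of course read $\Z/d\Z$).

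I do not expect any serious obstacle here, since all the substantive work is contained in Corollary \ref{5.5.4}. The only points that need (routine) care are verifying that composition in $N_d$ corresponds to coordinatewise addition in $(\Z/d\Z)^{r_d}$ — immediate from Corollary \ref{cor3.1} — and keeping the sign conventions for the $\phi$-action consistent, so that $\zeta_{ij}$ genuinely corresponds to $e_j-e_i$; and even a sign slip here is harmless, as $\{e_j-e_i\}$ and $\{e_i-e_j\}$ generate the same subgroup $S$.
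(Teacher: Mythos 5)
Your proof is correct and follows essentially the same route as the paper: identify elements of $N_d$ with vectors in $(\Z/d\Z)^{r_d}$ via their action $\alpha_k\mapsto\phi^{a_k}(\alpha_k)$ on orbit representatives, and use the maps $\zeta$ of Corollary \ref{5.5.4} to realize the generators $e_j-e_i$ of the sum-zero (augmentation) subgroup. Your write-up is in fact a cleaner, more explicit version of the paper's terse argument (including the correct reading of the typo $\Z/\Z$ as $\Z/d\Z$), so no changes are needed.
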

\begin{proof} Since for any $\sigma\in N_d$,  $\sigma(\alpha_i)=\phi^{a_i}(\alpha_i)$ and $a_1+a_2+\dots+a_{r_d}=0$ by corollary \ref{5.5.4}, where $\alpha_i$ are representative for orbits $\OO_i$ of order $d$ and $\phi$ is generator for $Gal(\F_{q^m}:\F_q)$. Since $\Gal(\F_{q^d}:\F_q)\cong(\Z/d\Z)$, hence $\{(a_1,a_2,\dots,a_{r_d})\in(\Z/\Z)^{r_d};a_1+a_2+\dots+a_{r_d}=0\}\subseteq N_d.$
\end{proof}
\begin{lemma}\label{5.7} We have the following bound on the index $$[G_d:Q_d]\leq2d.$$
\end{lemma}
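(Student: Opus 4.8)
The plan is to bound the index $[G_d:Q_d]$ by comparing the two groups through the semidirect‑product decompositions that are already available. Recall that $G_d\cong N_d^{G}\rtimes R_d^{G}$ where, by Corollary \ref{cor3.1}, the ``orbit‑moving'' factor $R_d^G\cong\Perm(r_d)$ and the ``inside‑orbit'' factor $N_d^G\cong(\Z/d\Z)^{r_d}$, and similarly $Q_d=N_d\rtimes R_d$ with $R_d$ satisfying $\Alt(r_d)\subseteq R_d\subseteq\Perm(r_d)$. Since both decompositions are compatible (the $N$'s are the kernels of the map to $\Perm(r_d)$, and the $R$'s are the chosen complements sitting inside $\Perm\{\alpha_1,\dots,\alpha_{r_d}\}$), I get
\[
[G_d:Q_d]=[N_d^G:N_d]\cdot[R_d^G:R_d].
\]
So it suffices to bound each factor by a quantity whose product is at most $2d$.

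First I would handle the orbit‑permuting factor: $R_d^G=\Perm(r_d)$ and $\Alt(r_d)\subseteq R_d$, so $[R_d^G:R_d]\in\{1,2\}$, giving the factor $2$. Second, for the inside‑orbit factor, $N_d^G\cong(\Z/d\Z)^{r_d}$, and by the corollary following Corollary \ref{5.5.4} the subgroup $N_d$ contains the ``sum‑zero'' subgroup $\{(a_1,\dots,a_{r_d})\in(\Z/d\Z)^{r_d}: a_1+\dots+a_{r_d}=0\}$, which has index exactly $d$ in $(\Z/d\Z)^{r_d}$ (the sum map $(\Z/d\Z)^{r_d}\to\Z/d\Z$ is surjective with that kernel). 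Hence $[N_d^G:N_d]\leq d$. Multiplying, $[G_d:Q_d]\leq 2d$, which is the claim. I should note for correctness that when $r_d$ is small the numbers only improve: if $r_d\leq 1$ then $N_d$ already equals $N_d^G$ and $R_d=\Perm(r_d)$, so the index is $1$; the stated bound $2d$ still holds.

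There is one bookkeeping point that needs care and is the main obstacle: making the identification $[G_d:Q_d]=[N_d^G:N_d]\cdot[R_d^G:R_d]$ rigorous, i.e. checking that the inclusion $Q_d\hookrightarrow G_d$ really is ``diagonal'' with respect to the two semidirect decompositions. Concretely, one has the exact sequences $1\to N_d\to Q_d\to \bar R_d\to 1$ and $1\to N_d^G\to G_d\to\Perm(r_d)\to 1$ with $N_d=N_d^G\cap Q_d$ and $\bar R_d=$ image of $Q_d$ in $\Perm(r_d)$; then the standard index multiplicativity for a subgroup of an extension gives $[G_d:Q_d]=[N_d^G:N_d]\cdot[\Perm(r_d):\bar R_d]$, and $\bar R_d\supseteq\Alt(r_d)$ by Theorem \ref{Main}. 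I would spell this out in a sentence or two and then conclude with the two bounds above.

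\begin{proof}
Write $r=r_d$, let $\OO_1,\dots,\OO_r$ be the orbits of size $d$, fix representatives $\alpha_i\in\OO_i$, and let $\phi$ generate $\Gal(\F_{q^d}:\F_q)\cong\Z/d\Z$. By Corollary \ref{cor3.1}, $G_d\cong(\Z/d\Z)^{r}\rtimes\Perm(r)$, the normal subgroup being $N_d^G:=\{\rho\in G_d:\rho(\OO_i)=\OO_i\ \forall i\}\cong(\Z/d\Z)^r$, and the quotient $G_d/N_d^G\cong\Perm(r)$ recording the induced permutation of the orbits. Restricting this quotient map to $Q_d\subseteq G_d$ we obtain
\[
1\longrightarrow N_d\longrightarrow Q_d\longrightarrow \bar R_d\longrightarrow 1,
\]
where $N_d=N_d^G\cap Q_d$ and $\bar R_d\subseteq\Perm(r)$ is the image of $Q_d$. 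By the multiplicativity of the index in a group extension,
\[
[G_d:Q_d]=[N_d^G:N_d]\cdot[\Perm(r):\bar R_d].
\]

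For the second factor: by Theorem \ref{Main} the action of $\TA_n(\F_q)$ on the set of orbits of size $d$ contains $\Alt(r)$, so $\bar R_d\supseteq\Alt(r)$ and hence $[\Perm(r):\bar R_d]\leq 2$.

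For the first factor: identify $N_d^G$ with $(\Z/d\Z)^r$ via $\rho\mapsto(a_1,\dots,a_r)$ where $\rho(\alpha_i)=\phi^{a_i}(\alpha_i)$. By the corollary following Corollary \ref{5.5.4}, $N_d$ contains the ``sum-zero'' subgroup
\[
\{(a_1,\dots,a_r)\in(\Z/d\Z)^r: a_1+\cdots+a_r=0\},
\]
which is the kernel of the surjective homomorphism $(\Z/d\Z)^r\to\Z/d\Z$, $(a_1,\dots,a_r)\mapsto a_1+\cdots+a_r$, and therefore has index exactly $d$ in $N_d^G$ (for $r\geq 1$). Hence $[N_d^G:N_d]\leq d$.

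Combining the two bounds, $[G_d:Q_d]\leq 2d$, as claimed.
\end{proof}
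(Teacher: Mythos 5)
Your proof is correct and follows essentially the same route as the paper: the decompositions $G_d\cong(\Z/d\Z)^{r_d}\rtimes\Perm(r_d)$ and $Q_d\cong N_d\rtimes R_d$, the sum-zero subgroup from the corollary after Corollary \ref{5.5.4} giving $[(\Z/d\Z)^{r_d}:N_d]\leq d$, and Theorem \ref{Main} giving $[\Perm(r_d):R_d]\leq 2$. The only difference is that you spell out, via the exact sequences and finite-group order counting, why the two indices multiply to bound $[G_d:Q_d]$, a step the paper leaves implicit.
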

\begin{proof} Since $$G_d\cong (\Z/d\Z)^{r_d}\rtimes (\Perm(r_d)),$$ $$Q_d\cong N_d\rtimes R_d.$$  Also by lemma \ref{5.5.4} $$\{(a_1,a_2,\dots,a_{r_d})\in(\Z/\Z)^{r_d};a_1+a_2+\dots+a_{r_d}=0\}\subseteq N_d$$ and by theorem \ref{Main}
$$\Alt\{\alpha_1,\dots,\alpha_{r_d}\}\subseteq R_d\subseteq\Perm\{\alpha_1,\dots,\alpha_{r_d}\}.$$
Since $[(\Z/d\Z)^{r_d}:N_d]\leq d,$ $[\Perm(r_d):R_d]\leq2,$ hence $[G_d:Q_d]\leq2d.$
\end{proof}

\begin{proof} (of main theorem \ref{main5.3}) Since
 $$\#\pi_{q^m}(\TA_n(\F_q))=\#Q_{d_0}\cdot\#Q_{d_1}\cdots\#Q_{d_m},$$
$$\#\MMA_n^m(\F_q)=\#G_{d_0}\cdot\#G_{d_1}\cdots\#G_{d_m}.$$
By lemma \ref{5.7} we have the result.
\end{proof}
\begin{corollary} (of the theorems \ref{Main1}, \ref{main5.3})\\
For $m=2,q\cong3,7\mod8$ we have $$[\MMA_n^2(\F_q):\pi_{q^2}(\TA_n(F_q))]\leq 2.$$
\end{corollary}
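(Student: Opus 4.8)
The plan is to specialise the factorisation that underlies Theorem~\ref{main5.3} to $m=2$ and then to sharpen the two resulting factors, using the extra input of Proposition~\ref{Main1} and of the theorem of \cite{Mau03} quoted in Section~\ref{Section1}. Since the divisors of $2$ are $1$ and $2$, the two decompositions recorded just before Lemma~\ref{5.7} collapse to $\#\MMA_n^2(\F_q)=\#G_1\cdot\#G_2$ and $\#\pi_{q^2}(\TA_n(\F_q))=\#Q_1\cdot\#Q_2$, where the $G_d$ are the groups of Corollary~\ref{cor3.1} and $Q_1$, $Q_2$ are the corresponding factors for $\TA_n(\F_q)$ (so $Q_1$ is the action of $\TA_n(\F_q)$ on the orbits of size $1$ and $Q_2=\pi_{q^2}(\TA_n(\F_q;\X_1))$). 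Since $Q_d$ embeds in $G_d$ for $d=1,2$, we get
\[[\MMA_n^2(\F_q):\pi_{q^2}(\TA_n(\F_q))]=[G_1:Q_1]\cdot[G_2:Q_2],\]
and it is enough to show $[G_1:Q_1]=1$ and $[G_2:Q_2]\leq 2$.

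First I would handle the factor $[G_1:Q_1]$. The orbits of size $1$ are exactly the $\F_q$-rational points, so $G_1=\Perm(\F_q^n)$ and $Q_1=\pi_q(\TA_n(\F_q))$. As $q\equiv 3,7\bmod 8$ is odd, the theorem of \cite{Mau03} quoted in Section~\ref{Section1} gives $\pi_q(\TA_n(\F_q))=\Sym(\F_q^n)=G_1$, hence $[G_1:Q_1]=1$.

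Next I would re-run the proof of Lemma~\ref{5.7} for the factor $[G_2:Q_2]$, but with the stronger information available here. Writing $r=r_2$, we have $G_2\cong(\Z/2\Z)^{r}\rtimes\Perm(r)$ and $Q_2\cong N_2\rtimes R_2$, so $[G_2:Q_2]=[(\Z/2\Z)^{r}:N_2]\cdot[\Perm(r):R_2]$. By Corollary~\ref{5.5.4} the group $N_2$ contains the index-$2$ subgroup $\{(a_1,\dots,a_r):a_1+\dots+a_r=0\}$, so $[(\Z/2\Z)^{r}:N_2]\leq 2$. In the proof of Lemma~\ref{5.7} the estimate $[\Perm(r):R_2]\leq 2$ relied only on the inclusion $\Alt(\bar\X)\subseteq\texttt{G}$ coming from Theorem~\ref{Main}; but for $m=2$ and $q\equiv 3,7\bmod 8$ Proposition~\ref{Main1} gives the stronger $\texttt{G}=\Sym(\bar\X)$, and carrying $\Sym(\bar\X)$ through the same argument forces $R_2=\Perm(r)$, i.e.\ $[\Perm(r):R_2]=1$. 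Thus $[G_2:Q_2]\leq 2$, and multiplying the two factors gives the asserted bound $[\MMA_n^2(\F_q):\pi_{q^2}(\TA_n(\F_q))]\leq 2$.

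The step I expect to be the main obstacle is exactly this last one: upgrading $[\Perm(r):R_2]\leq 2$ to $R_2=\Perm(r)$. The subtlety is that $R_2$ is (up to isomorphism) the image on the orbit set $\bar\X$ of those tame maps that fix $\X_1=\F_q^n$ pointwise, so what is required is an \emph{odd} permutation of $\bar\X$ realised by such a map, whereas the odd witnesses $F_1$ and $F_2$ of Lemmas~\ref{4.7}--\ref{4.8} do not fix $\F_q^n$. One must correct them by an element of $\TA_n(\F_q)$ — which is available precisely because $\pi_q(\TA_n(\F_q))=\Sym(\F_q^n)$ for odd $q$ — and then check that the correction can be chosen so that the induced permutation of $\bar\X$ remains odd. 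This is the symmetric-group strengthening of the implicit implication ``$\Alt(\bar\X)\subseteq\texttt{G}\Rightarrow\Alt\subseteq R_d$'' already used inside the proof of Lemma~\ref{5.7}, and it is where the bulk of the remaining work sits.
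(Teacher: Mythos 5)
Your overall skeleton is the same as the paper's intended one (the paper's proof is literally ``combine Theorems \ref{Main1} and \ref{main5.3}'', and your way of making that precise --- the factorization $[\MMA_n^2(\F_q):\pi_{q^2}(\TA_n(\F_q))]=[G_1:Q_1]\cdot[G_2:Q_2]$, the equality $[G_1:Q_1]=1$ from the theorem of \cite{Mau03} for odd $q$, and $[(\Z/2\Z)^{r_2}:N_2]\leq 2$ --- is exactly the intended combination). However, the step you yourself single out as the remaining obstacle, the upgrade to $R_2=\Perm(r_2)$, is not merely unfinished: it fails, so the proposed repair (correcting $F_1,F_2$ by a tame map so as to fix $\F_q^n$ pointwise while keeping the induced permutation of $\overline{\X}$ odd) cannot succeed. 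The reason is a parity coincidence. Compare the two characters of $\TA_n(\F_q)$ given by the sign of the induced permutation of $\F_q^n$ and the sign of the induced permutation of $\overline{\X}$ (orbits of size $2$). On every generator of $\TA_n(\F_q)$ they agree when $q\equiv 3,7\bmod 8$: strictly triangular maps, linear transvections and translations induce permutations of order $p$, hence are even on both sets; the dilation $F_1=(aX_1,X_2,\dots,X_n)$ with $a$ a generator of $\F_q^*$ and the swap $F_2=(X_2,X_1,X_3,\dots,X_n)$ are odd on $\overline{\X}$ by Lemmas \ref{4.7} and \ref{4.8}, and a direct cycle count shows they are also odd on $\F_q^n$ (for the swap, the number of $2$-cycles is $q^{n-1}(q-1)/2$, which is odd exactly because $q\equiv 3\bmod 4$; for the dilation one gets $q^{n-1}$ cycles of even length $q-1$). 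Hence the two signs coincide on all of $\TA_n(\F_q)$, so every tame map that is the identity on $\F_q^n$ induces an \emph{even} permutation of $\overline{\X}$; that is, $R_2$ lands inside the alternating group and $[\Perm(r_2):R_2]\geq 2$. In particular your corrected map $F_2T^{-1}$ with $\pi_q(T)=\pi_q(F_2)$ is automatically even on $\overline{\X}$.

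The same coincidence blocks the alternative repair of making the $N_2$-factor trivial: for a Galois-equivariant permutation of $\X_2$ the sign of its action on the points of $\X_2$ equals the parity of the total number of twisted orbits, and on the generators this sign again agrees with the sign on $\F_q^n$ (compare the signs of $F_1,F_2$ on $\F_{q^2}^n$ and on $\F_q^n$), so maps fixing $\F_q^n$ pointwise also have even total twist and $N_2$ stays inside the sum-zero subgroup. Consequently this decomposition can only yield $[\MMA_n^2(\F_q):\pi_{q^2}(\TA_n(\F_q))]\leq 4$, and in fact the two parity characters show that $\pi_{q^2}(\TA_n(\F_q))$ is contained in an index-$4$ subgroup of $\MMA_n^2(\F_q)$, so the index is at least $4$ (granting Lemmas \ref{4.7}--\ref{4.9}). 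So the difficulty you flagged is a genuine gap rather than a technicality, and it equally affects the paper's own one-line proof: Theorem \ref{Main1} concerns the image of all of $\TA_n(\F_q)$ on $\overline{\X}$, and transferring its symmetric-group conclusion to the subgroup $Q_2$ of maps fixing $\F_q^n$ pointwise is precisely what the parity obstruction forbids; the bound $2$ as stated does not follow by this route and appears to need correction to $4$.
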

\begin{proof}
Combine theorems \ref{Main1} and \ref{main5.3}.
\end{proof}

\end{document}